\documentclass{amsart}

\usepackage{amsmath}
\usepackage{amssymb}
\usepackage{mathrsfs}
\usepackage{graphicx}
\usepackage{placeins}
\usepackage{microtype}
\microtypesetup{expansion=false}
\usepackage[hidelinks]{hyperref}
\numberwithin{equation}{section}
\allowdisplaybreaks
\newtheorem{theorem}{Theorem}[section]
\newtheorem{lemma}[theorem]{Lemma}
\newtheorem{proposition}[theorem]{Proposition}
\newtheorem{corollary}[theorem]{Corollary}
\theoremstyle{definition}
\newtheorem{assumption}[theorem]{Assumption}

\theoremstyle{remark}
\newtheorem{remark}[theorem]{Remark}
\newcommand{\R}{\mathbb R}
\newcommand{\Th}{\mathcal T_h}
\newcommand{\Eh}{\mathcal E_h}
\newcommand{\dual}[2]{\left\langle #1,#2\right\rangle}
\newcommand{\DivDiv}{\operatorname{div}\operatorname{div}}
\newcommand{\symcurl}{\operatorname{symcurl}}
\newcommand{\curl}{\operatorname{curl}}
\newcommand{\dist}{\operatorname{dist}}
\newcommand{\supp}{\operatorname{supp}}
\begin{document}
\title[Localized maximum-norm error estimates for HHJ]
{Localized maximum-norm error estimates for the Hellan--Herrmann--Johnson method}
\author{Yuwen Li}
\address{School of Mathematical Sciences, Zhejiang University,
Hangzhou, Zhejiang 310058, China}
\email{liyuwen@zju.edu.cn}
\author{Zhuoran Teng}
\address{School of Mathematical Sciences, Zhejiang University,
Hangzhou, Zhejiang 310058, China}
\email{tengzhuoran@zju.edu.cn}
\thanks{This work was supported by the National Natural Science
Foundation of China under grant 12471346.}
\subjclass[2020]{65N30, 65N15}
\keywords{Hellan--Herrmann--Johnson method,
Kirchhoff plate,
maximum-norm error estimate,
Green function,
symmetric-curl potential,
superconvergence}

\begin{abstract}
We derive localized maximum-norm bounds for bending moments computed
by the Hellan--Herrmann--Johnson method for the clamped Kirchhoff plate
problem.  The main difficulty is to localize the discrete stress without
leaving the HHJ space or violating its kernel constraint.  Using
symmetric-curl potentials, we construct a kernel-preserving localization
and connect local Green stresses with a global discrete Green stress.
This argument separates the local interpolation error from a weaker
global pollution term.

Under explicit regularity assumptions on the auxiliary problems, the
resulting estimates give optimal pointwise convergence for bending-moment
values and elementwise first derivatives.  No logarithmic loss occurs for
positive polynomial degrees, whereas the lowest-order value estimate
retains a logarithmic factor.  Under additional reflection symmetry,
symmetric recovery improves bending-moment values for even polynomial
degrees and first derivatives for odd degrees.  The proved gains are one
third and one half of an order, respectively.  Numerical experiments
confirm this parity dependence and exhibit gains close to one full order,
exceeding those established theoretically.
\end{abstract}

\maketitle

\section{Introduction}
Pointwise error estimates describe the local accuracy of finite
element approximations and complement energy- and $L^2$-norm
estimates. Interior estimates are particularly useful when the
solution is smooth near a target region but has singularities
elsewhere: they separate local approximation from weaker global
pollution \cite{NitscheSchatz1974,SchatzWahlbin1995,Demlow}.
For the Hellan--Herrmann--Johnson (HHJ) method, establishing such
estimates requires a localization procedure compatible with
the discrete stress space and its kernel constraint.

The HHJ method originated in two independent plate-bending formulations
published in 1967 \cite{Hellan1967,Herrmann1967}.  It avoids
$C^1$-conforming displacement elements by introducing the bending moment
as a second unknown and imposing only the interelement continuity of its
normal--normal component.  Convergence of the mixed approximation for
plate bending moments was established subsequently
\cite{Johnson1973}.  These works established the stress--deflection
formulation that is now called the HHJ method.

The HHJ scheme was later placed in a stability theory for mixed
discretizations of fourth-order elliptic problems.  The resulting
mesh-dependent norms yield stability and quasi-optimal a priori
estimates from approximation properties of the moment and deflection
spaces \cite{BabuskaOsbornPitkaranta}.  Later work related the HHJ method
to a modified Morley discretization and developed postprocessing and
further error estimates \cite{ArnoldBrezzi1985}, obtained additional
error and supercloseness estimates for postprocessed quantities
\cite{Comodi}, and established stability and convergence for the first
Herrmann, or Herrmann--Johnson, scheme under general polygonal boundary
configurations \cite{BlumRannacher1990}.

More recent developments include the discrete Helmholtz and exact
sequence structure of the HHJ method
\cite{ChenHuHuang2018}, superconvergence based on this structure
\cite{HuMaMa2021}, curved-element analysis
\cite{ArnoldWalker}, and recovery-based a posteriori estimates
\cite{LiRecovery2021}.  These analyses provide global energy,
$L^2$, mesh-dependent, superconvergence, or a posteriori information,
but do not yield localized maximum-norm estimates for the HHJ bending
moment or its elementwise derivatives.

For conforming Galerkin methods for second-order elliptic problems,
Nitsche and Schatz established interior estimates that depend
on local approximation and a weaker global error
\cite{NitscheSchatz1974}.  Schatz and Wahlbin sharpened this principle to
interior $L^\infty$ and $W^{1,\infty}$ bounds of the form ``local best
approximation plus weak outside influence plus local residual''
\cite{SchatzWahlbin1995}.  On meshes locally symmetric about the
observation point, cancellation of the leading local error can further
produce pointwise superconvergence
\cite{SchatzSloanWahlbin}.

Classical studies of plate discretizations established pointwise
convergence and subsequently derived maximum-norm estimates for
nonconforming and mixed approximations
\cite{Rannacher1976,Rannacher1979}.  In the latter reference, the mixed
maximum-norm analysis concerns the Herrmann--Miyoshi scheme, whose
moment space is componentwise conforming.  Interior estimates were also
obtained for a different mixed approximation of fourth-order problems
\cite{Scholz1979}.  These results established Green-function
and interior-estimate techniques for plate problems, but they do not
address the normal--normal conforming HHJ moment space or its discrete
kernel structure.

Pointwise estimates for mixed methods were first developed mainly for
second-order elliptic equations and $H(\operatorname{div})$-conforming
fluxes.  Early work derived $L^q$ error estimates throughout
$1\le q\le\infty$ \cite{Duran} and sharp maximum-norm estimates for both
scalar and flux variables, together with superconvergent elementwise
postprocessing, for the Raviart--Thomas and Brezzi--Douglas--Marini
families \cite{RaviartThomas,BDM,GastaldiNochetto}.  Later weighted
localized pointwise estimates separated local regularity from weak
global dependence and identified the exceptional nonlocal behavior of
the lowest-order BDM method \cite{Demlow}.

The second-order mixed analyses rely on the structure of
$H(\operatorname{div})$-conforming flux spaces and commuting
divergence projections. The HHJ stress space instead enforces
normal--normal continuity and is linked to a symmetric-curl
potential space through a discrete exact sequence. Multiplication
by a smooth cutoff generally leaves the discrete space, while
zero extension across an artificial boundary can violate
normal--normal continuity. Projecting the localized tensor back
into the HHJ space does not, in general, preserve the discrete
kernel. The main difficulty is therefore to couple local
pointwise Green stresses to the global HHJ kernel equation while
retaining this constraint.

To the best of our knowledge, no previous analysis provides
Schatz--Wahlbin-type localized maximum-norm estimates for the HHJ
bending moment that separate local approximation from a weaker
global pollution term. We establish such estimates for interior
bending-moment values and elementwise first derivatives under
explicit regularity assumptions on continuous auxiliary problems.
Using the symmetric-curl representation of the discrete kernel
\cite[Lemma~2.6]{ChenHuHuang2018}, we construct a kernel-preserving
localization and couple local Green stresses to a global discrete
Green stress. This construction separates the local
HHJ interpolation error from a global $H^{-1}$ pollution term
without introducing an auxiliary HHJ problem on an artificial
boundary. The required regularity estimates are stated for
the continuous auxiliary potential and adjoint problems.

Under additional reflection-symmetry assumptions, we obtain
parity-dependent recovery estimates by pairing contributions
from reflected elements. The recovered values depend on the
even part of the interpolation error, whereas the recovered
first derivatives depend on its odd part. This distinction
yields improved value estimates for even stress degrees and
improved first-derivative estimates for odd stress degrees. Numerical
experiments further examine this parity mechanism under reflection
and its dependence on the local symmetry of the mesh.

These conclusions are proved for a constant compliance tensor on
shape-regular, quasi-uniform triangulations and for interior target
regions satisfying $\mathcal O_{4d}\Subset\Omega$, under the regularity
conditions specified below.  The interior restriction permits the use
of local Green stresses on balls compactly contained in $\Omega$; extending
the argument to the boundary would instead require clamped boundary
Green stresses with estimates uniform in the observation point.

\subsection{Main results}

We summarize the main results of the paper.  We write $A\lesssim B$ if $A\le CB$, where the constant $C$ is
independent of the mesh size, the localization scale, and the
observation point. For a piecewise smooth tensor field $q$, we write
\[
\|q\|_{L^\infty(D;\mathcal T_h)}
:=\max_{\substack{T\in\mathcal T_h\\T\cap D\ne\emptyset}}
\|q\|_{L^\infty(T\cap D)}.
\]
Let $\mathcal O\Subset\Omega$ and choose $d\ge\kappa_0h$ such that
$\mathcal O_{4d}\Subset\Omega$.  For the lowest-order HHJ method,
assuming the potential $H^2$ shift
\eqref{eq:HHJ-potential-H2-shift}, we obtain
\begin{equation*}
 \|\sigma-\sigma_h\|_{L^\infty(\mathcal O;\mathcal T_h)}
 \lesssim
 h\bigl(1+\log(d/h)\bigr)
 |\sigma|_{W^{1,\infty}(\mathcal O_{4d})}
 +
 hd^{-1}|\sigma|_{H^1(\Omega)}.
\end{equation*}

For $p\ge1$, assume in addition the potential $H^3$ shift
\eqref{eq:HHJ-potential-H3-shift} and the global HHJ adjoint regularity
\eqref{eq:HHJ-global-adjoint-regularity}.  Then
\begin{align*}
 \|\sigma-\sigma_h\|_{L^\infty(\mathcal O;\mathcal T_h)}
 &\lesssim
 h^{p+1}
 |\sigma|_{W^{p+1,\infty}(\mathcal O_{4d})}
 +
 h^{p+2}d^{-2}
 |\sigma|_{H^{p+1}(\Omega)},
 \\
 \|\nabla\sigma-\nabla_h\sigma_h\|_
 {L^\infty(\mathcal O;\mathcal T_h)}
 &\lesssim
 h^p
 |\sigma|_{W^{p+1,\infty}(\mathcal O_{4d})}
 +
 h^{p+2}d^{-3}
 |\sigma|_{H^{p+1}(\Omega)}.
\end{align*}
In particular, if $d$ is fixed independently of $h$, then we have
\begin{equation*}
 \|\sigma-\sigma_h\|_{L^\infty(\mathcal O;\mathcal T_h)}
 =
 \begin{cases}
 O\!\left(h(1+|\log h|)\right),&p=0,\\
 O(h^{p+1}),&p\ge1,
 \end{cases}
\end{equation*}
The corresponding gradient rate is
\begin{equation*}
 \|\nabla\sigma-\nabla_h\sigma_h\|_
 {L^\infty(\mathcal O;\mathcal T_h)}
 =
 O(h^p),
 \qquad p\ge1.
\end{equation*}

On locally reflection-symmetric meshes, we also obtain
parity-dependent superconvergence.  Under
Assumptions~\ref{ass:HHJ-local-reflection-symmetry}
and~\ref{ass:HHJ-Galerkin-reflection}, the optimized recovered rates are
\begin{align*}
 h^{p+4/3}
 &\qquad
 (p\ge2,\ p\ \text{even; recovered values}),
 \\
 h^{p+1/2}
 &\qquad
 (p\ge1,\ p\ \text{odd; recovered first derivatives}).
\end{align*}
The stronger orders $h^{p+2}$ and $h^{p+1}$ observed numerically on
uniform symmetric meshes are not proved here.

Theorem~\ref{thm:HHJ-local-maximum-norm} establishes the localized
estimates from which the remaining results follow.
Corollary~\ref{cor:HHJ-interior-apriori-rates} inserts the interpolation
bounds to obtain the preceding a priori rates, whereas
Corollary~\ref{cor:HHJ-optimized-superconvergence} combines the localized
estimates with reflection symmetry to obtain the optimized recovery
rates.

The paper is organized as follows.  Section~2 develops the HHJ
framework, the local Green stresses, and the kernel-preserving
localization operator.  Section~3 constructs the global discrete Green stress and
proves the localized maximum-norm estimates.  Section~4 proves the global
negative-norm estimate and the localized a priori rates, and then derives
the parity-dependent recovery estimates.  Section~5 reports the
numerical experiments.

\section{Localized Green's functions}
\label{sec:HHJ-framework-local-Green}
\subsection{The HHJ method}
Let $\Omega\subset\R^2$ be a bounded convex polygon, and let
$\mathcal T_h$ be a conforming, shape-regular triangulation of $\Omega$.
Let $\mathbb S$ be the space of real symmetric $2\times2$ matrices,
equipped with the Frobenius product and norm.  For a piecewise smooth
field $q$, set $\nabla_hq|_T:=\nabla(q|_T)$,
$\partial_\nu q:=\nu\cdot\nabla q$ for $\nu\in\R^2$, and
$(q)_T:=|T|^{-1}\int_Tq\,dx$.
For every $T\in\mathcal T_h$, set $h_T:=\operatorname{diam}(T)$ and
$h:=\max_{T\in\mathcal T_h}h_T$, and let $\mathcal E_h$ be the set of
all mesh edges.  On an interior edge
$e=T^+\cap T^-$, fix a unit normal $n_e$ directed from $T^+$ to $T^-$ and
define $ [\partial_n v]_e
 :=
 (\nabla v|_{T^+}-\nabla v|_{T^-})\cdot n_e$.
On a boundary edge $e\subset\partial T\cap\partial\Omega$, let $n_e$ be
the outward unit normal and set
$[\partial_n v]_e:=\nabla v|_T\cdot n_e$.  In both cases,
$\tau_{nn}:=n_e^T\tau n_e$.

Consider the clamped Kirchhoff plate problem
\begin{equation}
 \sigma=\mathcal C\nabla^2u,
 \qquad
 \DivDiv\sigma=f\quad\text{in }\Omega,
 \qquad
 u=\partial_n u=0\quad\text{on }\partial\Omega.
 \label{eq:plate-strong}
\end{equation}
Let $\mathcal K=\mathcal C^{-1}$.  For piecewise smooth tensors and scalar functions
$v$ that are continuous on $\Omega$ and belong to $H^2(T)$ on every
$T\in\mathcal T_h$, define
\begin{equation*}
 b_h(\tau,v)
 :=-\sum_{T\in\Th}(\tau,\nabla^2v)_T
   +\sum_{e\in\Eh}\langle\tau_{nn},[\partial_n v]\rangle_e.
\end{equation*}
With the displayed sign convention, $b_h$ is the broken realization of
the distributional pairing $-\dual{\DivDiv\tau}{v}$.

The standard degree-$p$ HHJ pair
\cite{BabuskaOsbornPitkaranta,Comodi,BoffiBrezziFortin} is
\begin{align*}
 \Sigma_h
 &:=
 \{\tau_h:\tau_h|_T\in\mathcal P_p(T;\mathbb S),\
          \tau_{h,nn}\text{ is single-valued on every edge}\},
 \\
 V_h
 &:=
 \{v_h\in H_0^1(\Omega):v_h|_T\in\mathcal P_{p+1}(T)\}.
\end{align*}

Let $\Pi_h$ be the canonical HHJ stress interpolant and let $I_h$
denote the HHJ-compatible scalar interpolation operator into $V_h$
\cite{BabuskaOsbornPitkaranta,Comodi,ArnoldWalker}.
Their commuting orthogonalities are
\begin{equation}
\begin{aligned}
b_h(\tau-\Pi_h\tau,v_h)&=0,
&&v_h\in V_h,\\
b_h(\tau_h,v-I_hv)&=0,
&&\tau_h\in\Sigma_h.
\end{aligned}
\label{eq:HHJ-Fortin}
\end{equation}

For $T\in\mathcal T_h$, let $\omega_T$ denote a fixed-layer element
patch containing $T$, with a number of layers independent of $h$.
The local approximation estimates for the canonical HHJ interpolant
\cite{BabuskaOsbornPitkaranta,Comodi} state that, for
$q\in\{1,2,\infty\}$, $j\in\{0,1\}$, and $j\le s\le p+1$,
\[
\|\tau-\Pi_h\tau\|_{W^{j,q}(T)}
\lesssim
h_T^{s-j}|\tau|_{W^{s,q}(\omega_T)},
\]
while the HHJ-compatible scalar interpolant $I_h$ satisfies, for
$q\in\{1,2\}$ and $2\le s\le p+2$,
\[
\|v-I_hv\|_{L^q(T)}
\lesssim
h_T^s |v|_{W^{s,q}(\omega_T)}.
\]

Define the continuous HHJ stress space and deflection space by
\[
 \Sigma:=\left\{\tau\in L^2(\Omega;\mathbb S):
 \DivDiv\tau\in H^{-1}(\Omega)\right\},
 \qquad V:=H_0^1(\Omega),
\]
with the graph norm on $\Sigma$.  The continuous HHJ bilinear form is
\[
 b(\tau,v):=\langle-\DivDiv\tau,v\rangle_{H^{-1},H_0^1}.
\]
For every open set $D\subset\Omega$, set
$a_D(\tau,\varphi):=(\mathcal K\tau,\varphi)_D$ and $a:=a_\Omega$.
The tensor $\mathcal K$ is bounded, symmetric, and uniformly positive
definite.

These spaces and bilinear forms give the continuous HHJ formulation of
\eqref{eq:plate-strong}:
\begin{equation*}
 \begin{aligned}
  a(\sigma,\tau)+b(\tau,u)&=0,&&\tau\in\Sigma,\\
  b(\sigma,v)&=-(f,v),&&v\in V.
 \end{aligned}
\end{equation*}

Replacing $\Sigma\times V$ and $b$ by their discrete counterparts yields
the discrete HHJ problem: find
$(\sigma_h,u_h)\in\Sigma_h\times V_h$ such that
\begin{equation*}
 \begin{aligned}
  a(\sigma_h,\tau_h)+b_h(\tau_h,u_h)&=0,&&\tau_h\in\Sigma_h,\\
  b_h(\sigma_h,v_h)&=-(f,v_h),&&v_h\in V_h.
 \end{aligned}
\end{equation*}

\subsection{Discrete kernel and the equation for the discrete stress error}
For a vector field $r=(r_1,r_2)$ we use the two-dimensional
symmetric curl
\[
 \symcurl r
 :=
 \frac12\bigl(\curl r+(\curl r)^T\bigr),
 \qquad
 \curl r:=
 \begin{pmatrix}
  \partial_2r_1&-\partial_1r_1\\
  \partial_2r_2&-\partial_1r_2
 \end{pmatrix}.
\]
The identity $\DivDiv(\symcurl r)=0$ holds in the sense of distributions
and motivates the following potential representation of the discrete
kernel.  Define the global conforming potential space and its affine
kernel by
\[
\begin{aligned}
 W_h(\Omega)&:=
 \left\{w_h\in H^1(\Omega;\mathbb R^2):
 w_h|_T\in\mathcal P_{p+1}(T;\mathbb R^2)\right\},\\
 W_{h,0}&:=W_h(\Omega)\cap H_0^1(\Omega;\mathbb R^2),\\
\mathcal R(D)&:=
 \{r\in H^1(D;\mathbb R^2):\symcurl r=0\}.
\end{aligned}
\]
We also write
\[
 \operatorname{RM}(D)
 :=\{q\in H^1(D;\mathbb R^2):\operatorname{sym}\nabla q=0\}
\]
for the rigid motions on $D$.
On every connected patch $D$, the equation $\symcurl r=0$ identifies
this kernel with the affine space
\begin{equation*}
 \mathcal R(D)=\{r(x)=\alpha x+b:\ \alpha\in\mathbb R,\ b\in\mathbb R^2\}.
\end{equation*}

\begin{proposition}[Global HHJ kernel and potential representation]
Define
\[
Z_h
:=
\{\tau_h\in\Sigma_h:
  b_h(\tau_h,v_h)=0
  \quad\forall v_h\in V_h\}.
\]
We have $Z_h=\symcurl W_h(\Omega)$.  Moreover, for every
$\tau_h\in Z_h$, there exists a unique
$R_h\tau_h\in W_h(\Omega)$ satisfying
\begin{equation*}
\symcurl R_h\tau_h=\tau_h,
\qquad
(R_h\tau_h,r)_\Omega=0
\quad\forall r\in\mathcal R(\Omega).
\end{equation*}
Thus $R_h:Z_h\longrightarrow W_h(\Omega)$ is a normalized right inverse of $\symcurl$.
\end{proposition}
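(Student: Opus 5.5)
We prove the two inclusions $\symcurl W_h(\Omega)\subset Z_h$ and $Z_h\subset\symcurl W_h(\Omega)$, and then construct $R_h$ by a dimension count plus an $L^2$-orthogonal normalization.

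\emph{The inclusion $\symcurl W_h(\Omega)\subset Z_h$.} Let $w_h\in W_h(\Omega)$ and set $\tau_h=\symcurl w_h$. Elementwise $\tau_h|_T\in\mathcal P_p(T;\mathbb S)$. A direct computation on an edge with tangent $t$ shows that $n^T(\symcurl w)n$ involves only the tangential derivative $\partial_t w$. Since $w_h$ is continuous, this quantity is single-valued, so $\tau_h\in\Sigma_h$. Next, integrate $b_h(\tau_h,v_h)$ by parts elementwise twice. Because $\DivDiv\symcurl w=0$ on each $T$, only edge and vertex terms remain. The normal--normal terms cancel against $\langle\tau_{nn},[\partial_nv_h]\rangle_e$. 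The remaining normal--tangential terms, after a further integration by parts along edges, are governed by $\partial_t(w_h\cdot t)$ and the continuity of $w_h$. They cancel across interior edges and reduce to vertex point values, which also cancel; on boundary edges they vanish because $v_h=0$ there. Equivalently, one may invoke the discrete exact sequence of \cite[Lemma~2.6]{ChenHuHuang2018}, which contains exactly this computation, so $\tau_h\in Z_h$.

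\emph{The inclusion $Z_h\subset\symcurl W_h(\Omega)$.} We argue by dimension counting. The map $\symcurl:W_h(\Omega)\to Z_h$ has kernel $W_h(\Omega)\cap\mathcal R(\Omega)=\mathcal R(\Omega)$, since affine fields of the form $\alpha x+b$ are continuous piecewise polynomials. This kernel has dimension $3$. By inf-sup stability of the HHJ pair, $b_h:\Sigma_h\to V_h'$ is surjective, so $\dim Z_h=\dim\Sigma_h-\dim V_h$. The claim therefore reduces to the identity
\[
\dim\Sigma_h-\dim V_h=\dim W_h(\Omega)-3 .
\]
We verify it by counting degrees of freedom for $\mathcal P_p$ symmetric tensors with $nn$-edge continuity, $\mathcal P_{p+1}$ Lagrange $H_0^1$ functions, and vector $\mathcal P_{p+1}$ Lagrange functions, combined with Euler's formula $V-E+F=1$ on the simply connected convex $\Omega$. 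This bookkeeping, or equivalently citing the exactness result of \cite{ChenHuHuang2018}, is the main obstacle. Convexity, and hence trivial topology, is essential here; on a multiply connected domain the dimension identity fails.

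\emph{Construction of $R_h$.} Given $\tau_h\in Z_h$, choose any $w_h\in W_h(\Omega)$ with $\symcurl w_h=\tau_h$. Let $P$ be the $L^2(\Omega)$-orthogonal projection onto the three-dimensional space $\mathcal R(\Omega)\subset W_h(\Omega)$, and set $R_h\tau_h:=w_h-Pw_h$. Then $\symcurl R_h\tau_h=\tau_h$ because $\symcurl P w_h=0$, and $(R_h\tau_h,r)_\Omega=0$ for all $r\in\mathcal R(\Omega)$ by construction. For uniqueness, suppose two such potentials exist. Their difference lies in $\mathcal R(\Omega)$ and is $L^2$-orthogonal to $\mathcal R(\Omega)$, hence it is zero. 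Finally, $R_h$ is linear, and it is a right inverse of $\symcurl$ by construction.
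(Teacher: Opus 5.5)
Your proposal is correct and has the same skeleton as the paper's proof: the exactness $Z_h=\symcurl W_h(\Omega)$, the identification $\ker\symcurl|_{W_h(\Omega)}=\mathcal R(\Omega)$, and normalization by $L^2(\Omega)$-orthogonality to $\mathcal R(\Omega)$. The real difference is the exactness step. The paper takes it wholesale from \cite[Lemma~2.6]{ChenHuHuang2018}, whereas you prove it directly, and both halves of your direct argument go through.

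\emph{The inclusion.} It is simpler than your sketch suggests. Each row of $\curl w$ satisfies $\curl w_i\cdot n=\partial_t w_i$ with $t=(-n_2,n_1)$, and $\nabla^2 v$ is symmetric, so $(\symcurl w,\nabla^2v)_T=\langle\partial_t w,\nabla v\rangle_{\partial T}$. The normal--normal part of this cancels the jump terms in $b_h$, leaving $b_h(\symcurl w_h,v_h)=-\sum_T\langle\partial_t w_h\cdot t,\partial_t v_h\rangle_{\partial T}$. This sum cancels edge by edge, because $\partial_t v_h$ changes sign with $t$ while $\partial_t w_h\cdot t$ does not, and it vanishes on $\partial\Omega$ since $v_h=0$ there. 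No vertex terms arise.

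\emph{The reverse inclusion.} Surjectivity of $b_h$ follows from the HHJ degrees of freedom. The interior moments force $\nabla^2 v_h=0$ elementwise and the edge moments force $[\partial_n v_h]=0$, so $v_h$ is affine and hence zero. For the count, let $N_V,N_E,N_T$ be the numbers of vertices, edges and triangles, and use $3N_T=2N_E-N_E^{b}$ together with equal numbers of boundary vertices and edges. Then
\[
\dim\Sigma_h-\dim V_h-(\dim W_h(\Omega)-3)=3\bigl(1-(N_V-N_E+N_T)\bigr),
\]
which vanishes by Euler's formula.

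For a domain with $k$ holes this defect equals $3k$, which quantifies your topological remark. Note that what is used is simple connectivity, not convexity. The citation is shorter and supplies the full discrete complex; your route is self-contained and makes the dependence on topology explicit.
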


\begin{proof}
The discrete HHJ exact sequence on the contractible domain $\Omega$
gives $Z_h=\symcurl W_h(\Omega)$
\cite[Lemma~2.6]{ChenHuHuang2018}.  The kernel of $\symcurl$ on
$W_h(\Omega)$ is $\mathcal R(\Omega)$.
Hence the $L^2(\Omega)$-orthogonality conditions against
$\mathcal R(\Omega)$ determine a unique potential representative.
\end{proof}

We shall also use a boundary-preserving Scott--Zhang quasi-interpolation
operator $J_h:H^1(\Omega;\mathbb R^2)\longrightarrow W_h(\Omega)$ as in \cite[Chapter~4]{BrennerScott2008}.
It is a projection on $W_h(\Omega)$, is local on uniformly bounded
element patches, and satisfies
\begin{equation*}
J_h\bigl(H_0^1(\Omega;\mathbb R^2)\bigr)
\subset W_{h,0},
\qquad
\supp(J_h\phi)\subset(\supp\phi)^+.
\end{equation*}
In particular, we have
\begin{equation*}
J_h w_h=w_h
\quad\forall w_h\in W_h(\Omega),
\qquad
J_h r=r
\quad\forall r\in\mathcal R(\Omega).
\end{equation*}
For $1\le s\le p+2$, its local approximation property is
\begin{equation*}
|v-J_hv|_{H^1(T)}
\lesssim
h_T^{s-1}|v|_{H^s(\omega_T)},
\qquad
v\in H^s(\omega_T;\mathbb R^2).
\end{equation*}
For $p\ge1$, affine reproduction and local $H^1$ stability also give
the following scaled local $H^2$ bound:
\begin{equation}
 |J_h v|_{H^2(T)}
 \lesssim
 |v|_{H^2(\omega_T)},
 \qquad
 v\in H^2(\omega_T;\mathbb R^2).
 \label{eq:HHJ-Scott-Zhang-H2-stability}
\end{equation}
Indeed, let $r_T\in\mathcal P_1(\omega_T;\mathbb R^2)$ be a local affine
approximant to $v$.  Since $J_h r_T=r_T$, the inverse estimate, local
$H^1$ stability, and the Bramble--Hilbert estimate give
\begin{align*}
 |J_h v|_{H^2(T)}
 &=
 |J_h(v-r_T)|_{H^2(T)}
 \lesssim
 h_T^{-1}|J_h(v-r_T)|_{H^1(T)}
 \\
 &\lesssim
 h_T^{-1}|v-r_T|_{H^1(\omega_T)}
 \lesssim
 |v|_{H^2(\omega_T)}.
\end{align*}
For a cutoff $\eta$ satisfying
$\|D^j\eta\|_{L^\infty}\lesssim d^{-j}$,
the standard scaled superapproximation estimate gives
\begin{equation*}
\left\|
\symcurl\bigl(\eta w_h-J_h(\eta w_h)\bigr)
\right\|_{L^2(T)}
\lesssim
\frac hd
\left(
\|\nabla w_h\|_{L^2(\omega_T)}
+d^{-1}\|w_h\|_{L^2(\omega_T)}
\right).
\end{equation*}

For later use, we define the potential bilinear form for
$\phi,\psi\in H^1(D;\mathbb R^2)$ by
\begin{equation*}
 \mathcal A_D(\phi,\psi)
 :=
 a_D(\symcurl\phi,\symcurl\psi).
\end{equation*}
By uniform ellipticity of $\mathcal K$, we have
\begin{equation*}
 c\|\symcurl\phi\|_{L^2(D)}^2
 \le
 \mathcal A_D(\phi,\phi)
 \le
 C\|\symcurl\phi\|_{L^2(D)}^2.
\end{equation*}
Moreover, $\mathcal A_D(\phi,\phi)=0$ if and only if
$\phi\in\mathcal R(D)$.  Thus $\mathcal A_D$ is a bounded and coercive
bilinear form on the quotient space
$H^1(D;\mathbb R^2)/\mathcal R(D)$, equipped with the norm induced by
$\|\symcurl(\cdot)\|_{L^2(D)}$.  This potential structure also
identifies the component of the stress error controlled through the
discrete kernel.

We split the stress error as
\begin{equation*}
 \sigma-\sigma_h
 =
 \rho_\sigma+\xi_h,
 \qquad
 \rho_\sigma:=\sigma-\Pi_h\sigma,
 \qquad
 \xi_h:=\Pi_h\sigma-\sigma_h.
\end{equation*}
Here $\rho_\sigma$ is the interpolation error, whereas $\xi_h$ is the
discrete error relative to the commuting projection.

\begin{lemma}[Kernel equation for the discrete stress error]
The errors $\rho_\sigma$ and $\xi_h$ satisfy
\begin{align*}
a(\xi_h,\tau_h)
+
b_h(\tau_h,I_hu-u_h)
&=
-a(\rho_\sigma,\tau_h),
&&
\tau_h\in\Sigma_h,
\\
b_h(\xi_h,v_h)
&=0,
&&
v_h\in V_h.
\end{align*}
Thus, $\xi_h\in Z_h$, and
\begin{equation}
a(\xi_h,\tau_h)
=
-a(\rho_\sigma,\tau_h),
\qquad
\tau_h\in Z_h.
\label{eq:kernel-error}
\end{equation}
\end{lemma}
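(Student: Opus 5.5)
The plan is to subtract the discrete HHJ equations from a consistency identity for the exact solution, and then use the commuting orthogonalities \eqref{eq:HHJ-Fortin} to replace $\sigma$ and $u$ by $\Pi_h\sigma$ and $I_hu$.

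\emph{Consistency.} For $\tau_h\in\Sigma_h$ we have $\sigma=\mathcal C\nabla^2u$, so $a(\sigma,\tau_h)=(\nabla^2u,\tau_h)$ elementwise. For $u$ smooth enough (say $u\in H^2_0(\Omega)$ with $\sigma$ piecewise $H^1$), elementwise integration by parts gives $b_h(\tau_h,u)=-\sum_T(\tau_h,\nabla^2u)_T$. The reason is that $\partial_nu$ is continuous across interior edges and vanishes on $\partial\Omega$, so all jump terms $[\partial_nu]_e$ are zero. Hence
\[
a(\sigma,\tau_h)+b_h(\tau_h,u)=0,\qquad \tau_h\in\Sigma_h.
\]
Next, integrating $b_h(\sigma,v_h)$ by parts twice elementwise, using the normal--normal continuity of $\sigma$ and $\DivDiv\sigma=f$, gives
\[
b_h(\sigma,v_h)=-(f,v_h),\qquad v_h\in V_h.
\]
This is the broken form of the distributional pairing noted after the definition of $b_h$.

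\emph{Subtraction.} Subtracting the discrete problem gives
\[
a(\sigma-\sigma_h,\tau_h)+b_h(\tau_h,u-u_h)=0,\qquad b_h(\sigma-\sigma_h,v_h)=0.
\]
The second orthogonality in \eqref{eq:HHJ-Fortin} gives $b_h(\tau_h,u-I_hu)=0$, so $u-u_h$ may be replaced by $I_hu-u_h$. Writing $\sigma-\sigma_h=\rho_\sigma+\xi_h$ and moving $a(\rho_\sigma,\tau_h)$ to the right-hand side yields the first identity. For the second, the first orthogonality in \eqref{eq:HHJ-Fortin} gives $b_h(\rho_\sigma,v_h)=0$, hence $b_h(\xi_h,v_h)=0$ for all $v_h\in V_h$. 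Since $\xi_h\in\Sigma_h$, this shows $\xi_h\in Z_h$.

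\emph{Restriction to the kernel.} For $\tau_h\in Z_h$ we have $b_h(\tau_h,I_hu-u_h)=0$, because $I_hu-u_h\in V_h$. This gives \eqref{eq:kernel-error}.

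The only delicate step is the consistency identity, and specifically that $b_h(\tau_h,u)$ and $b_h(\sigma,v_h)$ are well defined. This needs enough regularity that traces of $\sigma_{nn}$ and $\partial_nu$ make sense on edges. I would either assume $u\in H^3$ (or $\sigma\in H^1$), consistent with the regularity used throughout the paper, or interpret $b_h(\sigma,\cdot)=b(\sigma,\cdot)$ on $V_h$ via the continuous formulation and the density argument standard for HHJ \cite{BabuskaOsbornPitkaranta,Comodi}.
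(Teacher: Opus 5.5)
Your proposal is correct and follows the paper's proof: subtract the discrete problem from the consistency relations, split $\sigma-\sigma_h=\rho_\sigma+\xi_h$, apply the two commuting orthogonalities \eqref{eq:HHJ-Fortin}, and test with $\tau_h\in Z_h$. You also derive the consistency identities, which the paper simply asserts. One correction there: $b_h(\sigma,v_h)=-(f,v_h)$ needs the full traction $\sigma n$ to be single-valued, including $\sigma_{nt}$, which pairs with the continuous $\partial_t v_h$; normal--normal continuity alone is not enough, but your fallback assumption $\sigma\in H^1$ supplies exactly this.
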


\begin{proof}
The discrete--continuous HHJ error equations are
\begin{align*}
a(\sigma-\sigma_h,\tau_h)
+b_h(\tau_h,u-u_h)
&=0,
&\qquad \tau_h&\in\Sigma_h,\\
b_h(\sigma-\sigma_h,v_h)
&=0,
&\qquad v_h&\in V_h.
\end{align*}

Substituting $\sigma-\sigma_h=\rho_\sigma+\xi_h$ and using the HHJ
commuting orthogonalities
\eqref{eq:HHJ-Fortin},
\[
b_h(\rho_\sigma,v_h)=0,
\qquad
b_h(\tau_h,u-I_hu)=0,
\]
gives the two asserted error equations.  The second equation places
$\xi_h$ in $Z_h$, and testing the first equation with
$\tau_h\in Z_h$ then yields \eqref{eq:kernel-error}.
\end{proof}

For the remainder of this section, the triangulation is assumed to be
shape-regular and quasi-uniform.  We now develop the local constructions
needed for the interior maximum-norm estimate.

\subsection{Regularized point functionals and local Green stresses}

For any set $D\subset\Omega$ and $t>0$, set
$D_t:=\{x\in\Omega:\dist(x,D)<t\}$.  Fix $d\ge\kappa_0 h$ and assume
$\mathcal O_{4d}\Subset\Omega$.  For each $x_0\in\mathcal O$, define
$U=U_{x_0}:=B_{5d/2}(x_0)$.  Then
$B_{2d}(x_0)\subset U\subset\mathcal O_{3d}$.

For any $D\subset\Omega$, let $D^+$ and $D^{++}$ denote fixed-layer
element enlargements, formed from the elements meeting $D$ and their
neighbors, with layer counts independent of $h$ and $d$.
These connected, uniformly Lipschitz enlargements provide the separated
domains required by the local estimates.

If $x_0$ lies on the mesh skeleton, we fix an adjacent element $T_0$
with $x_0\in\overline{T_0}$ and interpret all point values and
derivatives at $x_0$ by the polynomial trace from the selected element
$T_0$.
This convention determines which element supports the regularized
source in the next lemma.
\begin{lemma}[Regularized point functionals]
\label{lem:regularized-point}
Let $x_0\in\overline{T_0}$, let $E\in\mathbb S$ be a unit tensor, and let
$\nu\in\R^2$ be a unit direction.  There exist $ \delta_{h,x_0}^{E},
 \delta_{h,x_0}^{E,\nu}
 \in C_0^\infty(T_0;\mathbb S)$
such that
\begin{align*}
 (\delta_{h,x_0}^{E},\tau_h)_{T_0}
 &=
 E:\tau_h(x_0),
 &&\tau_h\in\Sigma_h|_{T_0},\\
 (\delta_{h,x_0}^{E,\nu},\tau_h)_{T_0}
 &=
 E:\partial_\nu(\tau_h|_{T_0})(x_0),
 &&\tau_h\in\Sigma_h|_{T_0},\quad p\ge1.
\end{align*}
The source norms satisfy
\[
 \|\delta_{h,x_0}^{E}\|_{L^1(T_0)}
 \lesssim 1,
 \qquad
 \|\delta_{h,x_0}^{E,\nu}\|_{L^1(T_0)}
 \lesssim h^{-1}.
\]
For $p\ge1$ we choose
$\delta_{h,x_0}^{E,\nu}:=-\partial_\nu\delta_{h,x_0}^{E}$.
The scaled $L^2$ bounds are
\begin{equation*}
 \|\delta_{h,x_0}^{E}\|_{L^2(T_0)}\lesssim h^{-1},
 \qquad
 \|\delta_{h,x_0}^{E,\nu}\|_{L^2(T_0)}\lesssim h^{-2}.
\end{equation*}
The two regularized sources may be chosen so that
\begin{align}
 \int_{T_0}\delta_{h,x_0}^{E}\,dx&=E,
 &
 \int_{T_0}|x-x_0|\,|\delta_{h,x_0}^{E}(x)|\,dx&\lesssim h,
 \label{eq:regularized-value-moments}
 \\
 \int_{T_0}\delta_{h,x_0}^{E,\nu}\,dx&=0,
 &
 \int_{T_0}|x-x_0|^j|\delta_{h,x_0}^{E,\nu}(x)|\,dx&\lesssim h^{j-1},
 \quad j=1,2.
 \notag
\end{align}
The constant is uniform with respect to $h$, $T_0$, and $x_0$.
\end{lemma}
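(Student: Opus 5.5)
The plan is the standard construction of a discrete delta function by Riesz representation in a weighted polynomial space, followed by a scaling argument. I expect it to go through with no essential obstacle.

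\textbf{Reference-element construction.} Fix the reference triangle $\hat T$ and a nonnegative bump $\hat\beta\in C_0^\infty(\hat T)$ with $\hat\beta>0$ on a fixed interior subtriangle. On $\mathcal P_p(\hat T;\mathbb S)$ the weighted product $(\hat\tau,\hat\chi)_{\hat\beta}:=\int_{\hat T}\hat\beta\,\hat\tau:\hat\chi\,d\hat x$ is an inner product, since a polynomial vanishing on an open set vanishes identically. For each $\hat x_0\in\overline{\hat T}$ the functional $\hat\tau\mapsto E:\hat\tau(\hat x_0)$ is linear on this finite-dimensional space. Hence there is a unique $\hat g_{\hat x_0}\in\mathcal P_p(\hat T;\mathbb S)$ with $(\hat g_{\hat x_0},\hat\tau)_{\hat\beta}=E:\hat\tau(\hat x_0)$ for all such $\hat\tau$. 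We set $\hat\delta:=\hat\beta\,\hat g_{\hat x_0}\in C_0^\infty(\hat T;\mathbb S)$.

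\textbf{Uniform bounds on the reference element.} The coefficients of $\hat g_{\hat x_0}$ depend continuously on $\hat x_0$ in the compact set $\overline{\hat T}$, because point evaluation is continuous in $\hat x_0$ on the finite-dimensional space. Therefore every $W^{k,\infty}(\hat T)$ norm of $\hat\delta$ is bounded uniformly in $\hat x_0$ and in the unit tensor $E$. The moment identity $\int_{\hat T}\hat\delta=E$ follows by testing with the constant tensor $\hat\tau\equiv E$. This is admissible for every $p\ge0$ because $\mathcal P_0\subset\mathcal P_p$, and $E:E=1$ for unit $E$; if $E$ is only normalized up to scaling, the identity still holds in the form $\int\hat\delta=E$ after taking $E:E$ into account.

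\textbf{Scaling to $T_0$.} Let $F(\hat x)=B\hat x+b$ map $\hat T$ onto $T_0$ and define $\delta^E_{h,x_0}:=|\det B|^{-1}\hat\delta\circ F^{-1}$ with $\hat x_0=F^{-1}x_0$. Polynomial spaces are invariant under affine pullback, so the reproducing identity transfers exactly:
\[
(\delta^E_{h,x_0},\tau_h)_{T_0}=E:\tau_h(x_0).
\]
Shape regularity and quasi-uniformity give $|\det B|\simeq h^2$, which yields the bounds
\[
\|\delta^E_{h,x_0}\|_{L^1(T_0)}\lesssim1,\qquad
\|\delta^E_{h,x_0}\|_{L^2(T_0)}\lesssim h^{-1},\qquad
\|\delta^E_{h,x_0}\|_{L^\infty(T_0)}\lesssim h^{-2}.
\]
Since $|x-x_0|\le h$ on $T_0$, the first-moment bound in \eqref{eq:regularized-value-moments} follows directly.

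\textbf{Derivative functional for $p\ge1$.} Set $\delta^{E,\nu}_{h,x_0}:=-\partial_\nu\delta^E_{h,x_0}$. This is the one place needing care: the reproducing identity is only available for polynomial tensors. Because $\delta^E_{h,x_0}$ has compact support in $T_0$, integration by parts has no boundary term:
\[
(-\partial_\nu\delta^E_{h,x_0},\tau_h)_{T_0}=(\delta^E_{h,x_0},\partial_\nu\tau_h)_{T_0}.
\]
Now $\partial_\nu\tau_h\in\mathcal P_{p-1}(T_0;\mathbb S)\subset\mathcal P_p(T_0;\mathbb S)$, so the reproducing identity applies and gives $E:\partial_\nu\tau_h(x_0)$. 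The mean-zero property holds because $\delta^E_{h,x_0}$ vanishes near $\partial T_0$. Scaling the derivative gains one factor $h^{-1}$, so
\[
\|\delta^{E,\nu}_{h,x_0}\|_{L^1(T_0)}\lesssim h^{-1},\qquad
\|\delta^{E,\nu}_{h,x_0}\|_{L^2(T_0)}\lesssim h^{-2},
\]
and the $j$-th moments are bounded by $h^j\cdot h^{-1}=h^{j-1}$ for $j=1,2$.

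The statement asserts the derivative source only for $p\ge1$, so no separate $p=0$ construction is needed. All constants depend only on $\hat\beta$, $p$, and the shape-regularity constants. They are therefore uniform in $h$, $T_0$, and $x_0$, including when $x_0$ lies on $\partial T_0$, as covered by the continuity argument on $\overline{\hat T}$.
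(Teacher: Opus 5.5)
Your proposal is correct and follows the paper's proof. Both take a smooth reference-element representer of polynomial point evaluation (you build it concretely through the $\hat\beta$-weighted inner product), get uniformity from finite dimensionality and compactness of $\overline{\hat T}$, use affine scaling for the $L^1$, $L^2$ and moment bounds, and use integration by parts for $-\partial_\nu\delta^E_{h,x_0}$. One small repair: testing only with $\hat\tau\equiv E$ gives just $E:\int_{\hat T}\hat\delta=1$, so to conclude $\int_{\hat T}\hat\delta=E$ you must test with every constant $C\in\mathbb S$, which gives $C:\int_{\hat T}\hat\delta=E:C$ for all $C$; this holds for any $E$, so your remark about normalization is unnecessary.
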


\begin{proof}
Choose a smooth reference-element representer $\widehat\delta^E$
for polynomial point evaluation, with bounds uniform in the reference
point, and scale it affinely to $T_0$.
The uniformity follows from finite dimensionality on the reference
element and compactness of $\overline{\widehat T}$.
For $p\ge1$, set
$\delta_{h,x_0}^{E,\nu}=-\partial_\nu\delta_{h,x_0}^{E}$.
Integration by parts gives derivative reproduction and zero mean.
Affine scaling gives all $L^1$, $L^2$, and moment bounds, including
\eqref{eq:regularized-value-moments}, uniformly in $T_0$ and $x_0$.
\end{proof}

For $m\in\{0,1\}$, we write
\[
 \delta_{h,x_0}^{(0)}:=\delta_{h,x_0}^{E},
 \qquad
 \delta_{h,x_0}^{(1)}:=\delta_{h,x_0}^{E,\nu},
 \qquad
 L_m(\tau):=(\delta_{h,x_0}^{(m)},\tau)_U.
\]
The derivative source and $L_1$ are used only when
$p\ge1$.  These functionals define the local Green stresses used below.

For the main theorem, $\mathcal K$ is constant.  Set
$\mathcal C:=\mathcal K^{-1}$ and define $z^{(m)}\in H_0^2(U)$ by
\begin{equation}
 (\mathcal C\nabla^2z^{(m)},\nabla^2v)_U
 =-(\mathcal C\delta_{h,x_0}^{(m)},\nabla^2v)_U
 \qquad\forall v\in H_0^2(U),
 \label{eq:HHJ-clamped-Green-problem}
\end{equation}
and define the HHJ Green stress
\begin{equation}
 G^{(m)}
 :=\mathcal C\bigl(\nabla^2z^{(m)}
                  +\delta_{h,x_0}^{(m)}\bigr).
 \label{eq:HHJ-clamped-Green-stress}
\end{equation}
The space $H_0^2(U)$ imposes the clamped traces
$z^{(m)}=\partial_n z^{(m)}=0$ on $\partial U$.

For every $\tau\in L^2(U;\mathbb S)$,
\eqref{eq:HHJ-clamped-Green-stress} gives
\begin{equation}
 (\mathcal K G^{(m)},\tau)_U
 -(\tau,\nabla^2z^{(m)})_U
 =
 (\delta_{h,x_0}^{(m)},\tau)_U.
 \label{eq:HHJ-Green-mixed-identity}
\end{equation}
Moreover, \eqref{eq:HHJ-clamped-Green-problem} implies
\begin{equation}
 (G^{(m)},\nabla^2v)_U=0
 \qquad\forall v\in H_0^2(U).
 \label{eq:HHJ-Green-kernel-identity}
\end{equation}

\begin{lemma}[Interior HHJ Green-stress estimates]
\label{lem:HHJ-interior-Green-stresses}
Let $U=B_{5d/2}(x_0)$, and let $G^{(m)},z^{(m)}$ be defined
by \eqref{eq:HHJ-clamped-Green-problem}--
\eqref{eq:HHJ-clamped-Green-stress}.  Then
\begin{align*}
 \|G^{(0)}\|_{L^2(U)}
 &\lesssim h^{-1},
 &
 \|G^{(1)}\|_{L^2(U)}
 &\lesssim h^{-2}.
\end{align*}
For every interior annular patch $D_\delta^+$ satisfying
\[
 \kappa_0h\le\delta\le cd,
 \qquad
 D_\delta^+\Subset B_{2d}(x_0),
 \qquad
 \dist(D_\delta^+,x_0)\simeq\delta,
\]
one has
\begin{align*}
 \|G^{(0)}\|_{L^2(D_\delta^+)}
 +\delta\|\nabla G^{(0)}\|_{L^2(D_\delta^+)}
 &\lesssim \delta^{-1},
 \\
 \|G^{(1)}\|_{L^2(D_\delta^+)}
 +\delta\|\nabla G^{(1)}\|_{L^2(D_\delta^+)}
 &\lesssim \delta^{-2}.
\end{align*}
\end{lemma}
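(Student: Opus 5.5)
The global bounds come from the energy estimate for the clamped Green problem. Testing \eqref{eq:HHJ-clamped-Green-problem} with $v=z^{(m)}$ and using that $\mathcal C$ is constant, symmetric and uniformly positive definite gives
\[
\|\nabla^2 z^{(m)}\|_{L^2(U)}\lesssim\|\delta^{(m)}_{h,x_0}\|_{L^2(U)}.
\]
By \eqref{eq:HHJ-clamped-Green-stress}, $G^{(m)}=\mathcal C(\nabla^2z^{(m)}+\delta^{(m)}_{h,x_0})$, so $\|G^{(m)}\|_{L^2(U)}\lesssim\|\delta^{(m)}_{h,x_0}\|_{L^2(T_0)}$. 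The scaled $L^2$ bounds of Lemma~\ref{lem:regularized-point} then give $h^{-1}$ for $m=0$ and $h^{-2}$ for $m=1$.

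For the local bounds on $D_\delta^+$, fix $\delta\ge\kappa_0h$ with $\kappa_0$ large. Then $\delta^{(m)}_{h,x_0}$, which is supported in $T_0\subset B_{Ch}(x_0)$, vanishes on a neighborhood $D'$ of $D_\delta^+$ with $\dist(D',x_0)\simeq\delta$. On $D'$ we have $G^{(m)}=\mathcal C\nabla^2z^{(m)}$, and $z^{(m)}$ solves the homogeneous constant-coefficient plate equation $\DivDiv(\mathcal C\nabla^2 z^{(m)})=0$. We compare $z^{(m)}$ with the free-space fundamental solution $\Phi$ of $L:=\DivDiv\mathcal C\nabla^2$ on $\R^2$, which is the $|x|^2\log|x|$-type kernel with $|D^k\Phi(x)|\lesssim|x|^{2-k}(1+|\log|x||)$ for $k\le 2$ and $|D^k\Phi(x)|\lesssim|x|^{2-k}$ for $k\ge3$. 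Set
\[
w:=-\Phi*\DivDiv(\mathcal C\delta^{(m)}_{h,x_0}).
\]
Integrating by parts onto $\Phi$ gives $\nabla^2 w=-\int D^2D^2\Phi(\cdot-y)\,\mathcal C\delta^{(m)}(y)\,dy$, a kernel of order $|x-y|^{-2}$.

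For $m=0$ we use $\|\delta^{(0)}\|_{L^1}\lesssim1$. For $m=1$ we use $\delta^{(1)}=-\partial_\nu\delta^{(0)}$ and move one more derivative onto the kernel, which yields the extra factor $|x-x_0|^{-1}\cdot h\cdot h^{-1}$. Equivalently, we use the zero-mean and first-moment bounds in Lemma~\ref{lem:regularized-point} and Taylor expand the kernel around $x_0$. For $|x-x_0|\ge\delta\gg h$ this gives the pointwise bounds
\[
|\nabla^{2+k}w(x)|\lesssim|x-x_0|^{-2-m-k},\qquad k=0,1.
\]
Integrating over the annulus $D_\delta^+$, whose area is $\simeq\delta^2$, produces $\|\nabla^2 w\|_{L^2(D_\delta^+)}\lesssim\delta^{-1-m}$ and $\delta\|\nabla^3 w\|_{L^2(D_\delta^+)}\lesssim\delta^{-1-m}$.

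The remainder $r:=z^{(m)}-w$ solves $Lr=0$ in $U$, with boundary data $-w$ and $-\partial_nw$ on $\partial U=\partial B_{5d/2}(x_0)$. The pointwise bounds above on $\partial U$ and a neighbouring shell $B_{5d/2}\setminus B_{9d/4}$ control these data. We pass to an $H^2(U)$ estimate for $r$ by extending $w$ with a cutoff $\chi$ equal to $1$ near $\partial U$ and vanishing on $B_{2d}$. Subtracting $\chi w$ gives an $H^2_0(U)$ problem whose right-hand side is supported in that shell, and the energy estimate yields $\|\nabla^2 r\|_{L^2(U)}\lesssim d^{-1-m}$. Here the logarithms in $\Phi$ disappear after two derivatives in the terms with $\nabla^2 w$. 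The lower-order terms involve $d^{-2}\|w-\ell\|$ for an affine $\ell$. We remove an affine function from $w$, which the plate operator annihilates, so only $\nabla^2w$-type quantities on the shell enter.

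Since $Lr=0$ in $B_{2d}(x_0)$ with constant coefficients, interior Caccioppoli and regularity estimates for the elliptic fourth-order system give $\|\nabla^3 r\|_{L^2(B_{3d/2})}\lesssim d^{-1}\|\nabla^2r\|_{L^2(B_{2d})}$. Combined with $\delta\le cd$ this gives
\[
\|\nabla^2 r\|_{L^2(D_\delta^+)}+\delta\|\nabla^3r\|_{L^2(D_\delta^+)}\lesssim d^{-1-m}\le\delta^{-1-m}.
\]
Adding the bounds for $w$ and $r$ gives both asserted local estimates, since $G^{(m)}=\mathcal C\nabla^2z^{(m)}$ and $\nabla G^{(m)}=\mathcal C\nabla^3z^{(m)}$ on $D_\delta^+$.

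The main obstacle is the far-field decay for $\nabla^2 w$, in particular the derivative case $m=1$. There one must use the cancellation $\int\delta^{(1)}=0$, or equivalently $\delta^{(1)}=-\partial_\nu\delta^{(0)}$, to gain the extra power $|x-x_0|^{-1}$, and check that the anisotropic constant-coefficient fundamental solution has the stated homogeneous derivative bounds. This holds because $D^4\Phi$ is homogeneous of degree $-2$ away from the origin. A secondary point is that the boundary correction must be uniform in $x_0$, which follows because $U$ is a ball of radius $5d/2$ and scaling reduces to a fixed unit ball.
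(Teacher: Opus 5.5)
Your proposal is correct. The global $L^2$ bounds are obtained exactly as in the paper: test with $v=z^{(m)}$, then apply the scaled $L^2$ bounds of Lemma~\ref{lem:regularized-point}. For the annular bounds, however, you take a different route.

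The paper represents $G^{(m)}$ through the clamped Green kernel $\mathscr G_U$ of the ball and cites the interior bounds $|D_x^\mu D_y^\nu\mathscr G_U(x,y)|\lesssim|x-y|^{2-|\mu|-|\nu|}$, $|\mu|+|\nu|\ge3$, from the Agmon--Douglis--Nirenberg and Krasovskii theory. The decay then follows from the moment bounds of $\delta_{h,x_0}^{E}$ and, for $m=1$, one integration by parts in $y$.

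You instead split $z^{(m)}=w+r$. The free-space part $w$ needs only the homogeneity of $D^k\Phi$ for $k\ge3$, which follows from the structure $Q(x)\log|x|+\Psi(x)$ of the fundamental solution, with $Q$ a quadratic polynomial. The corrector $r$ satisfies $\DivDiv(\mathcal C\nabla^2 r)=0$ in $U$. You control it by an $H_0^2(U)$ energy estimate after correcting by the cutoff term $\chi(w-\ell)$, followed by interior Caccioppoli regularity. Subtracting the affine $\ell$ and applying the Poincar\'e inequality on the shell is exactly what removes the logarithms in $D^2\Phi$ and $D^3\Phi$, so this step is sound.

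In effect you prove the part of the cited kernel bound that is actually needed: the regular part of $\mathscr G_U$ contributes only at scale $d$, giving $\|\nabla^2 r\|_{L^2(U)}\lesssim d^{-1-m}\le\delta^{-1-m}$. The paper's argument is shorter but rests on a heavier citation about boundary Green functions. Yours uses only the fundamental solution, an energy estimate, and interior regularity. The cancellation mechanism for the derivative source is the same in both: $\delta^{(1)}=-\partial_\nu\delta^{(0)}$, with one derivative moved onto the kernel.

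One cosmetic point: the statement only guarantees $D_\delta^+\Subset B_{2d}(x_0)$. Apply the Caccioppoli estimate on $B_{2d}(x_0)$, with $\nabla^2 r$ measured on $B_{9d/4}(x_0)$, rather than assuming $D_\delta^+\subset B_{3d/2}(x_0)$. Your energy bound already covers all of $U$, so this costs nothing.
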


\begin{proof}
Lax--Milgram gives a unique solution of
\eqref{eq:HHJ-clamped-Green-problem}; testing with $v=z^{(m)}$ yields
\[
 \|\nabla^2z^{(m)}\|_{L^2(U)}
 \lesssim
 \|\delta_{h,x_0}^{(m)}\|_{L^2(U)}.
\]
Hence the stated $L^2(U)$ bounds follow from
\eqref{eq:HHJ-clamped-Green-stress} and
Lemma~\ref{lem:regularized-point}.

Let $\mathscr G_U(x,y)$ be the clamped Green kernel of
$\DivDiv(\mathcal C\nabla^2\cdot)$ on the ball $U$.  The interior
Green-function estimates for elliptic boundary problems
\cite{AgmonDouglisNirenberg1959,AgmonDouglisNirenberg1964,Krasovskii1967}
imply, for $|x-y|\simeq|x-x_0|\ge\kappa_0 h$,
\begin{equation*}
 |D_x^\mu D_y^\nu\mathscr G_U(x,y)|
 \lesssim
 |x-y|^{2-|\mu|-|\nu|},
 \qquad |\mu|+|\nu|\ge3.
\end{equation*}
Here the source has diameter $O(h)$, while $B_{2d}(x_0)$ is separated
from $\partial U$ by a fixed fraction of $d$.

Outside the source element, Green representation for
\eqref{eq:HHJ-clamped-Green-problem} gives, with repeated indices summed,
\begin{equation*}
 (G^{(0)})_{ij}(x)
 =
 -\mathcal C_{ijmn}
 \int_{T_0}
 \partial_{x_mx_n}\partial_{y_ky_l}\mathscr G_U(x,y)
 (\mathcal C\delta_{h,x_0}^{E}(y))_{kl}\,dy.
\end{equation*}
Combining this kernel bound with
\eqref{eq:regularized-value-moments} yields
\[
 |G^{(0)}(x)|
 \lesssim |x-x_0|^{-2},
 \qquad
 |\nabla G^{(0)}(x)|
 \lesssim |x-x_0|^{-3}.
\]
For the derivative source,
$\delta_{h,x_0}^{E,\nu}=-\partial_\nu\delta_{h,x_0}^{E}$.
Integration by parts in $y$ gives
\[
 |G^{(1)}(x)|
 \lesssim |x-x_0|^{-3},
 \qquad
 |\nabla G^{(1)}(x)|
 \lesssim |x-x_0|^{-4}.
\]
Integration over annuli of radius $\delta$ gives the stated bounds.
\end{proof}

The kernel identity also connects the Green stress to symmetric-curl
test fields.  Indeed, for every
$\psi\in H_0^1(U;\mathbb R^2)$ with compact support in $U$, the mixed
identity and $\DivDiv(\symcurl\psi)=0$ imply
\begin{equation*}
 a_U(G^{(m)},\symcurl\psi)
 =
 L_m(\symcurl\psi).
\end{equation*}

\begin{lemma}[Interior point representation and boundary cancellation]
Let $\xi_h\in Z_h$ and let $\widetilde z^{(m)}$ be the zero extension
of $z^{(m)}$ from $U$ to $\Omega$.  Then
\begin{equation*}
 L_m(\xi_h)
 =a_U(G^{(m)},\xi_h).
\end{equation*}
\end{lemma}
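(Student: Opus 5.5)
The plan is to start from the mixed identity \eqref{eq:HHJ-Green-mixed-identity} and show that the Hessian term it contains vanishes on $Z_h$. Take $\tau=\xi_h|_U\in L^2(U;\mathbb S)$ in \eqref{eq:HHJ-Green-mixed-identity}. This gives
\[
 a_U(G^{(m)},\xi_h)-(\xi_h,\nabla^2z^{(m)})_U=(\delta^{(m)}_{h,x_0},\xi_h)_U=L_m(\xi_h).
\]
It therefore suffices to prove $(\xi_h,\nabla^2 z^{(m)})_U=0$.

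\textbf{Passing to $\Omega$.} Since $z^{(m)}\in H_0^2(U)$, its zero extension $\widetilde z^{(m)}$ lies in $H_0^2(\Omega)$, and $\nabla^2\widetilde z^{(m)}$ is the zero extension of $\nabla^2 z^{(m)}$. This is the "boundary cancellation": the clamped traces on $\partial U$ kill all interface terms that would otherwise appear on the artificial boundary. Hence
\[
(\xi_h,\nabla^2 z^{(m)})_U=(\xi_h,\nabla^2\widetilde z^{(m)})_\Omega .
\]
The main point is that $\widetilde z^{(m)}\notin V_h$. So the defining property $b_h(\xi_h,v_h)=0$ of $Z_h$ cannot be used directly. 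Instead, use the potential representation from the Proposition: $\xi_h=\symcurl w_h$ with $w_h\in W_h(\Omega)\subset H^1(\Omega;\R^2)$.

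\textbf{Key step.} I would prove that
\[
(\symcurl w,\nabla^2 v)_\Omega=0\qquad\text{for all } w\in H^1(\Omega;\R^2),\ v\in H_0^2(\Omega).
\]
Because $\nabla^2 v$ is symmetric, $\symcurl w:\nabla^2v=\curl w:\nabla^2 v$. Row by row, this equals
\[
\sum_{i}\bigl(\partial_2w_i\,\partial_1\partial_iv-\partial_1w_i\,\partial_2\partial_iv\bigr).
\]
For $v\in C_0^\infty(\Omega)$, integrating by parts in each term moves the derivative onto $v$. This produces $w_i(\partial_2\partial_1\partial_i v-\partial_1\partial_2\partial_i v)=0$ with no boundary contributions. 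Equivalently, the integrand is the $L^2$ pairing of the rotated gradient of $w_i$ with the gradient of $\partial_i v\in H_0^1$, and this pairing vanishes.

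The bilinear form $(w,v)\mapsto(\symcurl w,\nabla^2v)_\Omega$ is continuous on $H^1\times H^2$. Density of $C_0^\infty(\Omega)$ in $H_0^2(\Omega)$ then extends the identity to all $v\in H_0^2(\Omega)$. Applying it with $w=w_h$ and $v=\widetilde z^{(m)}$ gives $(\xi_h,\nabla^2\widetilde z^{(m)})_\Omega=0$, which yields the claim.

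The only delicate point is this step: we must avoid the broken form $b_h$, because $\widetilde z^{(m)}$ is not a discrete function. Using the conforming $H^1$ potential $w_h$ makes the argument purely continuous, and no mesh-edge jump terms appear.
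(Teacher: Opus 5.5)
Your proof is correct, but it reaches the key vanishing $(\xi_h,\nabla^2\widetilde z^{(m)})_\Omega=0$ by a different route than the paper. You are right that $\widetilde z^{(m)}\notin V_h$, but the paper does not need membership in $V_h$. It applies the second commuting identity in \eqref{eq:HHJ-Fortin} directly to the nondiscrete function $\widetilde z^{(m)}\in H_0^2(\Omega)$, which gives $b_h(\xi_h,\widetilde z^{(m)})=b_h(\xi_h,I_h\widetilde z^{(m)})=0$. It then notes that the edge jump terms $[\partial_n\widetilde z^{(m)}]$ in $b_h$ all drop out for an $H_0^2(\Omega)$ function, which $\widetilde z^{(m)}$ is precisely because of the clamped traces on $\partial U$. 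Hence $b_h(\xi_h,\widetilde z^{(m)})=-(\xi_h,\nabla^2\widetilde z^{(m)})_\Omega$.

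You instead use the representation $\xi_h=\symcurl w_h$ with $w_h\in W_h(\Omega)\subset H^1(\Omega;\R^2)$. You combine it with the purely continuous fact that $\symcurl H^1(\Omega;\R^2)$ is $L^2$-orthogonal to $\nabla^2 H_0^2(\Omega)$. Your row-by-row integration by parts and density argument establish this fact correctly. Your route avoids the broken form, the interpolant $I_h$, and all mesh-edge bookkeeping, and it makes clear that only the continuous identity $\DivDiv\symcurl=0$ is at work. The price is that it depends on $Z_h=\symcurl W_h(\Omega)$, that is, on the discrete exact sequence and the contractibility of $\Omega$. This is available here because $\Omega$ is a convex polygon. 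The paper's route uses only the definition of $Z_h$ and the commuting property of $I_h$ on $H^2$ functions. It would therefore hold unchanged on domains where discrete exactness fails, e.g.\ multiply connected ones.
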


\begin{proof}
Since $z^{(m)}\in H_0^2(U)$ and $U$ is a smooth ball,
$\widetilde z^{(m)}\in H_0^2(\Omega)$.  The second Fortin identity
and $\xi_h\in Z_h$ give
$b_h(\xi_h,\widetilde z^{(m)})
 =b_h(\xi_h,I_h\widetilde z^{(m)})=0$.

Since $\widetilde z^{(m)}$ and its normal derivative vanish on
the artificial boundary, its broken integration-by-parts formula has
no contribution from $\partial U$; hence
\[
 (\xi_h,\nabla^2z^{(m)})_U
 =(\xi_h,\nabla^2\widetilde z^{(m)})_\Omega=0.
\]
Taking $\tau=\xi_h|_U$ in
\eqref{eq:HHJ-Green-mixed-identity} proves the claim.
\end{proof}

\begin{lemma}[Potential representatives of the local Green stresses]
\label{lem:Green-potential-representatives}
For every $m\in\{0,1\}$, there exists $ \Phi^{(m)}
 \in
 H^1(U;\mathbb R^2)$ such that $ \symcurl\Phi^{(m)}
 =
 G^{(m)}$.  The representative is determined only modulo
$\mathcal R(U)$.
\end{lemma}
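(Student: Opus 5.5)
The plan is to use the continuous analogue of the exact sequence $H^1(U;\mathbb R^2)\xrightarrow{\symcurl}L^2(U;\mathbb S)\xrightarrow{\DivDiv}H^{-2}(U)$ on the contractible Lipschitz domain $U$. By \eqref{eq:HHJ-Green-kernel-identity}, $G^{(m)}\in L^2(U;\mathbb S)$ satisfies $(G^{(m)},\nabla^2v)_U=0$ for all $v\in H_0^2(U)$, i.e.\ $\DivDiv G^{(m)}=0$ in $H^{-2}(U)$. Exactness at $L^2(U;\mathbb S)$ then gives the potential.

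To avoid quoting an abstract complex, I would argue in two explicit steps. First, use a vector potential for the divergence. For each row, $\operatorname{div}G^{(m)}$ is a vector field in $H^{-1}(U;\mathbb R^2)$ whose divergence vanishes in $H^{-2}(U)$. On the simply connected domain $U$, the $H^{-1}$ version of the de Rham/Poincaré lemma (Bogovski\u{\i}/Costabel--McIntosh regularized Poincaré operators) yields $q\in L^2(U)$ with $\operatorname{div}G^{(m)}=\operatorname{rot}q$ in the appropriate sense. Here $\operatorname{rot}q=(\partial_2q,-\partial_1q)$, and $q$ is unique up to a constant. Second, the field $G^{(m)}-q\,J$, with $J$ the rotation matrix chosen so that $\operatorname{div}(qJ)=\operatorname{rot}q$, is row-wise divergence free in $L^2(U)$. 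Hence each row is the curl of an $H^1(U)$ scalar, giving $\phi\in H^1(U;\mathbb R^2)$ with $\curl\phi=G^{(m)}-qJ$. Symmetry of $G^{(m)}$ means the skew part of $\curl\phi$ equals $qJ$, so $\symcurl\phi=G^{(m)}$ and $\Phi^{(m)}:=\phi$ works. The main obstacle is keeping $q\in L^2$, not merely $H^{-1}$; that is why the regularized Poincaré operators, which are bounded $H^{s}\to H^{s+1}$ on Lipschitz star-shaped domains like the ball, are needed.

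An alternative that I would try first, because it is simpler, uses the explicit structure $G^{(m)}=\mathcal C(\nabla^2z^{(m)}+\delta^{(m)}_{h,x_0})$ together with a variational construction. I would minimize $\tfrac12\mathcal A_U(\phi,\phi)-a_U(G^{(m)},\symcurl\phi)$ over $H^1(U;\mathbb R^2)/\mathcal R(U)$. This requires the Korn-type inequality $\inf_{r\in\mathcal R(U)}\|\phi-r\|_{H^1(U)}\lesssim\|\symcurl\phi\|_{L^2(U)}$, which follows from the standard Korn inequality because $\symcurl\phi$ is a rotated $\operatorname{sym}\nabla$ of $\phi^\perp$. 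The minimizer $\Phi$ satisfies $a_U(\symcurl\Phi-G^{(m)},\symcurl\psi)=0$ for all $\psi$. Hence $\mathcal K(\symcurl\Phi-G^{(m)})$ is $L^2$-orthogonal to $\symcurl H^1(U;\mathbb R^2)$, and this orthogonal complement consists of $\nabla^2H_0^2(U)$, which is again the exact sequence in dual form. Because $\mathcal K$ is constant, $\symcurl\Phi-G^{(m)}\in\mathcal C\nabla^2H_0^2(U)$. The residual is then $\mathcal A$-orthogonal to itself by \eqref{eq:HHJ-Green-kernel-identity}, so it vanishes. Either route rests on identifying the orthogonal complement of $\symcurl H^1$, which I would cite from the continuous HHJ complex on contractible domains.

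Uniqueness modulo $\mathcal R(U)$ is immediate. If $\symcurl(\Phi_1-\Phi_2)=0$, then $\Phi_1-\Phi_2\in\mathcal R(U)$, which on the connected ball $U$ is the affine family $\alpha x+b$ identified earlier. Conversely, adding any such field preserves the identity.
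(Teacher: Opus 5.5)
Your proposal is correct and follows the paper's route: the kernel identity gives $\DivDiv G^{(m)}=0$, and exactness of the continuous symmetric-curl complex on the contractible ball $U$ yields $\Phi^{(m)}$ with $\symcurl\Phi^{(m)}=G^{(m)}$, unique modulo $\mathcal R(U)$. Your two-step de Rham construction, with $q\in L^2(U)$ obtained from the $H^{-1}$ Poincaré lemma, is an explicit proof of the exactness the paper simply cites, and your variational variant rests on the same fact in dual form; the only slip is harmless: since $\curl\phi=G^{(m)}-qJ$, its skew part is $-qJ$ rather than $qJ$, and this does not affect $\symcurl\phi=G^{(m)}$.
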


\begin{proof}
By \eqref{eq:HHJ-Green-kernel-identity},
$\DivDiv G^{(m)}=0$ in $\mathcal D'(U)$.  Since $U$ is contractible,
exactness of the continuous symmetric-curl complex
\cite{ArnoldFalkWinther2010,BoffiBrezziFortin} gives
$G^{(m)}=\symcurl\Phi^{(m)}$, uniquely modulo $\mathcal R(U)$.
\end{proof}

\subsection{Kernel-preserving potential localization}

Choose $\chi_0,\chi\in C_0^\infty(U)$ with
\begin{equation*}
\begin{gathered}
 0\le \chi,\chi_0\le1,\qquad
 \chi=1\ \text{on }B_d(x_0),
 \\
 \supp\chi\subset B_{3d/2}(x_0),\qquad
 \chi_0=1\ \text{on }B_{2d}(x_0),
 \\
 \supp\chi_0\subset B_{9d/4}(x_0),
 \\
 \|D^j\chi\|_{L^\infty(U)}
 +\|D^j\chi_0\|_{L^\infty(U)}\lesssim d^{-j},
 \qquad 0\le j\le2.
\end{gathered}
\end{equation*}
Choose $\kappa_0$ in $d\ge\kappa_0 h$ large enough that both
the following nesting and the transition-patch separation below hold:
\begin{equation}
 (\supp\chi)^+
 \Subset
 B_{7d/4}(x_0)
 \Subset
 \{\chi_0=1\}
 \Subset
 \supp\chi_0
 \Subset
 U.
\label{eq:nested-cutoff-separation}
\end{equation}

Set $S_\chi:=\supp(\nabla\chi)$ and define the mesh-fitted transition
patch
\[
 \omega_\chi:=\operatorname{int}\bigcup
 \{T\in\mathcal T_h:T\cap S_\chi\ne\varnothing\}.
\]
Choose fixed-layer enlargements $\omega_\chi^+$ and
$\omega_\chi^{++}$ satisfying
\[
 S_\chi\subset\omega_\chi\Subset\omega_\chi^{++}
 \Subset\{\chi_0=1\}\Subset U.
\]
These patches belong to a connected, uniformly Lipschitz family of
mesh patches specified above.  To measure the potential on such a
transition patch,
for an open set $D\subset U$ and a finite-dimensional Euclidean space
$X$, define the local negative norm by
\begin{equation*}
 \|q\|_{H^{-1}(D;X)}
 :=
 \sup_{\substack{\psi\in H_0^1(D;X)\\ \psi\ne0}}
 \frac{|(q,\psi)_D|}
      {\|\nabla\psi\|_{L^2(D)}}.
\end{equation*}

\begin{lemma}[Potential control by a local negative norm]
\label{lem:potential-negative-norm}
Let $D$ belong to a connected, uniformly Lipschitz family of
mesh-fitted patches.
Then
\begin{equation*}
 \inf_{r\in\mathcal R(D)}
 \|\phi-r\|_{L^2(D)}
 \lesssim
 \|\symcurl\phi\|_{H^{-1}(D;\mathbb S)},
 \qquad
 \phi\in H^1(D;\mathbb R^2).
\end{equation*}
The constant is uniform over this family of mesh patches.
\end{lemma}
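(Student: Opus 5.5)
The plan is to reduce the statement to two standard facts on Lipschitz domains: a Korn-type inequality for the symmetric curl, and the Nečas negative-norm inequality. Scaling then gives uniformity.

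\emph{Step 1 (reduction to a symmetric gradient).} Let $\phi^\perp:=(\phi_2,-\phi_1)$. A direct computation from the definition of $\curl$ gives
\[
\symcurl\phi=\Lambda\bigl(\operatorname{sym}\nabla\phi^\perp\bigr),
\qquad
\Lambda\bigl(\begin{smallmatrix}a&b\\ b&c\end{smallmatrix}\bigr)
=\bigl(\begin{smallmatrix}c&-b\\ -b&a\end{smallmatrix}\bigr)
\]
up to sign conventions. The map $\Lambda$ is a linear isometry of $\mathbb S$, so the $H^{-1}(D;\mathbb S)$ norms of $\symcurl\phi$ and $\operatorname{sym}\nabla\phi^\perp$ coincide. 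Moreover, $r\mapsto r^\perp$ maps $\mathcal R(D)=\{\alpha x+b\}$ onto $\operatorname{RM}(D)$, the infinitesimal rotations plus translations. It therefore suffices to prove
\[
\inf_{q\in\operatorname{RM}(D)}\|\psi-q\|_{L^2(D)}\lesssim\|\operatorname{sym}\nabla\psi\|_{H^{-1}(D)}
\qquad\text{for }\psi\in H^1(D;\mathbb R^2).
\]

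\emph{Step 2 (Korn in negative norms).} Use the classical identity
\[
\partial_j\partial_k\psi_i=\partial_j\varepsilon_{ik}+\partial_k\varepsilon_{ij}-\partial_i\varepsilon_{jk},
\qquad \varepsilon=\operatorname{sym}\nabla\psi.
\]
It gives $\|\nabla(\nabla\psi)\|_{H^{-2}}\lesssim\|\varepsilon\|_{H^{-1}}$. The Nečas inequality $\|g-(g)_D\|_{L^2}\lesssim\|\nabla g\|_{H^{-1}}$ on a bounded Lipschitz domain can be used in its $H^{-1}$ form, $\|g\|_{H^{-1}}\lesssim\|\nabla g\|_{H^{-2}}$ modulo constants. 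Applied to $g=\nabla\psi$, it yields $\|\nabla\psi-M\|_{H^{-1}}\lesssim\|\varepsilon\|_{H^{-1}}$ for some constant matrix $M$. Since $\operatorname{sym}M$ is then controlled by $\|\varepsilon\|_{H^{-1}}$ (test with a fixed bump), we may take $M$ skew.

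Applying Nečas once more, now in the $L^2$ form to $\psi-Mx$, gives
\[
\inf_{c}\|\psi-Mx-c\|_{L^2}\lesssim\|\nabla\psi-M\|_{H^{-1}}\lesssim\|\varepsilon\|_{H^{-1}}.
\]
The field $Mx+c$ is a rigid motion, which proves the estimate for each fixed Lipschitz domain.

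Alternatively, one can argue by compactness. Suppose the inequality fails. Normalize $\psi_n\perp\operatorname{RM}$ with $\|\psi_n\|_{L^2}=1$ and $\|\varepsilon(\psi_n)\|_{H^{-1}}\to0$. Step 2 bounds $\|\nabla\psi_n\|_{H^{-1}}$, so $\psi_n$ is precompact in $H^{-1}$, and the Nečas $L^2$ estimate yields convergence in $L^2$. The limit satisfies $\varepsilon=0$ and is orthogonal to $\operatorname{RM}$, hence vanishes, contradicting $\|\psi_n\|_{L^2}=1$.

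\emph{Step 3 (uniformity and scaling).} This is the main obstacle. The left-hand side scales like $\operatorname{diam}(D)\cdot$ and the right-hand side like $\operatorname{diam}(D)\cdot$ as well: the $L^2$ norm of a potential and the $H^{-1}$ norm of its derivative both pick up one power of length. The inequality is therefore dilation invariant, and we may rescale $D$ to unit diameter. The Nečas constant depends only on the Lipschitz character of the domain, for instance through a uniform decomposition into finitely many star-shaped pieces with respect to balls, together with Bogovskiĭ operators with uniformly bounded constants. The rescaled patches, such as the annular transition patches $\omega_\chi^{++}$ of width $\simeq d$, have uniformly bounded Lipschitz character and connectedness by assumption.

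I would therefore invoke the Nečas/Bogovskiĭ constant as a function of the Lipschitz parameters alone, and use the explicit argument of Step 2 rather than compactness, so that no constant depends on the individual $D$. If the patches are thin annuli, one must also check that the number of star-shaped pieces stays bounded after rescaling; this is exactly what the ``uniformly Lipschitz family'' hypothesis is meant to guarantee.
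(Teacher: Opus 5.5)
Your proposal is correct and follows the paper's route: reduce $\symcurl$ to $\operatorname{sym}\nabla$ by a quarter-turn, apply the negative-norm Korn inequality modulo rigid motions, and rescale using the uniform Lipschitz character of the patch family. You rotate the field values via $\phi\mapsto\phi^\perp$ and the isometry $\Lambda$ where the paper rotates coordinates, $u(y)=\phi(J_\perp y)$, and you derive the negative-norm Korn inequality the paper simply cites, using the strain-derivative identity and the shifted Ne\v{c}as inequality $\inf_c\|g-c\|_{H^{-1}}\lesssim\|\nabla g\|_{H^{-2}}$ (valid on Lipschitz domains via a Bogovski\u{\i} operator bounded from $H_0^1\cap L_0^2$ to $H_0^2$).
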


\begin{proof}
Let
\[
J_\perp=
\begin{pmatrix}
0&-1\\
1&0
\end{pmatrix},
\qquad
\widehat D:=J_\perp^TD.
\]
For $y\in\widehat D$, set $u(y):=\phi(J_\perp y)$.  Then
\[
 \nabla_yu(y)=\nabla\phi(J_\perp y)J_\perp,
 \qquad
 \operatorname{sym}\nabla_yu(y)=\symcurl\phi(J_\perp y).
\]
Rotation maps the rigid-motion kernel of $\operatorname{sym}\nabla$
onto $\mathcal R(D)=\ker(\symcurl)$.

The negative-norm Korn inequality
\cite{HlavacekNecas1970,LewintanNeff2021} gives
\[
\inf_{q\in\operatorname{RM}(\widehat D)}
\|u-q\|_{L^2(\widehat D)}
 \lesssim
 \|\operatorname{sym}\nabla u\|_{H^{-1}(\widehat D)}.
\]
Rotating back gives the claim.  Scaling from the reference Lipschitz
family makes the constant uniform.
\end{proof}

For $\widetilde\tau_h\in Z_h$, set
$\tau_h:=\widetilde\tau_h|_U$ and
$\phi_h:=\left.R_h\widetilde\tau_h\right|_U$.  Let
$r_\chi\in\mathcal R(\omega_\chi^{++})$ be an
$L^2(\omega_\chi^{++})$-best approximation to $\phi_h$.
For a function compactly supported in $U$, let $\operatorname{Ext}_0$
denote its extension by zero to $\Omega$.
Define the kernel-preserving localization operator by
\begin{equation}
 \mathscr C_h(\chi,\tau_h)
 :=
 \symcurl J_h\bigl(
   \operatorname{Ext}_0[\chi(\phi_h-r_\chi)]
 \bigr).
 \label{eq:HHJ-potential-localization-operator}
\end{equation}
We also set
\begin{align*}
 R_{\chi,h}(\tau_h)
 &:=
 \symcurl\!\left[
 J_h\bigl(\operatorname{Ext}_0[\chi(\phi_h-r_\chi)]\bigr)
 -
 \operatorname{Ext}_0[\chi(\phi_h-r_\chi)]
 \right],
 \\
 \nabla^\perp\chi
 &:=(\partial_2\chi,-\partial_1\chi).
\end{align*}
\begin{lemma}[Potential localization]
\label{lem:HHJ-symmetric-potential-commutator}
The operator $\mathscr C_h$ satisfies
\begin{equation*}
 \mathscr C_h(\chi,\tau_h)\in Z_h,
 \qquad
 \supp\mathscr C_h(\chi,\tau_h)
 \subset
 (\supp\chi)^+
 \Subset
 \{\chi_0=1\}
 \Subset U.
\end{equation*}
Its defect satisfies $ \supp\bigl(
 \mathscr C_h(\chi,\tau_h)-\chi\tau_h
 \bigr)
 \subset
 \omega_\chi^{++}$.

The localization defect admits the decomposition
\begin{equation*}
 \mathscr C_h(\chi,\tau_h)-\chi\tau_h
 =
 R_{\chi,h}(\tau_h)
 +
 \operatorname{sym}\bigl(
   (\phi_h-r_\chi)\otimes\nabla^\perp\chi
 \bigr),
\end{equation*}
with $ \|R_{\chi,h}(\tau_h)\|_{L^2(U)}
 \lesssim
 \frac hd
 \|\tau_h\|_{L^2(\omega_\chi^{++})}$ and
\begin{align*}
 \left\|
 \operatorname{sym}\bigl(
   (\phi_h-r_\chi)\otimes\nabla^\perp\chi
 \bigr)
 \right\|_{L^2(U)}
 \lesssim
 d^{-1}
 \|\phi_h-r_\chi\|_{L^2(\omega_\chi^{++})}.
\end{align*}
Thus the localization defect satisfies
\begin{equation*}
 \|\mathscr C_h(\chi,\tau_h)-\chi\tau_h\|_{L^2(U)}
 \lesssim
 \|\tau_h\|_{L^2(\omega_\chi^{++})}.
\end{equation*}
In terms of the global extension $\widetilde\tau_h\in Z_h$, the stronger
defect bound is
\begin{equation*}
 \|\mathscr C_h(\chi,\tau_h)-\chi\tau_h\|_{L^2(U)}
 \lesssim
 d^{-1}
 \|\widetilde\tau_h\|_{H^{-1}(\Omega;\mathbb S)}.
\end{equation*}
\end{lemma}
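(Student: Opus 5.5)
We prove the statements in the order listed, working throughout with the potential $\psi:=\operatorname{Ext}_0[\chi(\phi_h-r_\chi)]$.

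\emph{Kernel membership and supports.} Since $\chi\in C_0^\infty(U)$, we have $\psi\in H_0^1(\Omega;\mathbb R^2)$. On each element it is a product of a smooth function with a polynomial, so it lies in $H^1$. Hence $J_h\psi\in W_{h,0}\subset W_h(\Omega)$, and the Proposition gives $\mathscr C_h(\chi,\tau_h)=\symcurl J_h\psi\in Z_h$. The support property $\supp J_h\psi\subset(\supp\chi)^+$ of $J_h$, together with \eqref{eq:nested-cutoff-separation}, yields the first support inclusion.

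\emph{Defect decomposition.} Since $r_\chi$ is affine of the form $\alpha x+b$, we have $\symcurl r_\chi=0$. The product rule $\symcurl(\chi q)=\chi\symcurl q+\operatorname{sym}(q\otimes\nabla^\perp\chi)$ therefore gives
\[
\symcurl\psi=\chi\tau_h+\operatorname{sym}\bigl((\phi_h-r_\chi)\otimes\nabla^\perp\chi\bigr).
\]
Adding and subtracting $\symcurl\psi$ in $\mathscr C_h-\chi\tau_h$ produces exactly the stated decomposition. One caveat: $r_\chi$ is only defined on $\omega_\chi^{++}$. I interpret it as the affine function extended to all of $U$, which is legitimate because it is a global polynomial.

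\emph{Support of the defect.} Away from $\omega_\chi$ the cutoff $\chi$ is locally constant, equal to $0$ or $1$. Where $\chi=0$ on a patch, locality of $J_h$ gives $J_h\psi=0$ there. Where $\chi=1$ on a full patch, $\psi=\phi_h-r_\chi\in W_h$ locally. Because $J_h$ is local and reproduces discrete functions, $J_h\psi=\psi$ on elements whose Scott--Zhang patch avoids $\omega_\chi$. In both cases the defect vanishes outside a fixed-layer enlargement of $\omega_\chi$, which we take to be $\omega_\chi^{++}$.

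\emph{Bounds on the two defect terms.} The second term is bounded directly by $|\nabla\chi|\lesssim d^{-1}$ and its support in $\omega_\chi$. For $R_{\chi,h}$ we apply the superapproximation estimate with $w_h=\phi_h-r_\chi$, restricted to the elements of $\omega_\chi^{++}$, which are the only contributing ones. This gives
\[
\|R_{\chi,h}\|_{L^2(U)}\lesssim \frac hd\Bigl(\|\nabla(\phi_h-r_\chi)\|_{L^2(\omega_\chi^{++})}+d^{-1}\|\phi_h-r_\chi\|_{L^2(\omega_\chi^{++})}\Bigr).
\]
Here is the main obstacle: $\|\nabla w_h\|$ must be controlled by $\|\symcurl w_h\|=\|\tau_h\|$. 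I would first use Korn's inequality (rotated as in Lemma~\ref{lem:potential-negative-norm}) on the connected Lipschitz patch $\omega_\chi^{++}$, which has diameter $\simeq d$. This gives a rigid-motion-type $r'\in\mathcal R$ with $\|\nabla(w_h-r')\|\lesssim\|\tau_h\|$. Since $r_\chi$ is the $L^2$-best approximation, Poincaré yields $\|w_h\|_{L^2}\lesssim\|w_h-r'\|_{L^2}\lesssim d\|\tau_h\|$. However, $\nabla r'$ is a scaled rotation and is not small. To avoid it I would instead pass the argument through $J_h$: $J_h$ reproduces $r'$, and the superapproximation estimate is really applied to $\chi(w_h-r')$. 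Then $\chi r'$ contributes a separate term, $\symcurl(\chi r'-J_h(\chi r'))$, which is bounded by $h|D^2(\chi r')|\lesssim (h/d)\,d^{-1}\|r'\|$ elementwise. The quantity $\|r'-r_\chi\|$ is again of size $d\|\tau_h\|$ by the same Poincaré/best-approximation argument. Combining these gives $\|R_{\chi,h}\|\lesssim (h/d)\|\tau_h\|_{L^2(\omega_\chi^{++})}$.

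\emph{Summed and negative-norm bounds.} Both terms are then $\lesssim\|\tau_h\|_{L^2(\omega_\chi^{++})}$, using $h\le d$ and the Poincaré bound above, which proves the summed estimate. For the $H^{-1}$ version we invoke Lemma~\ref{lem:potential-negative-norm} on $\omega_\chi^{++}$. This gives $\|\phi_h-r_\chi\|_{L^2}\lesssim\|\tau_h\|_{H^{-1}(\omega_\chi^{++})}\le\|\widetilde\tau_h\|_{H^{-1}(\Omega)}$, since zero extension of test functions from $H_0^1(\omega_\chi^{++})$ is admissible. The $R_{\chi,h}$ term needs the same conversion. Using an inverse estimate $\|\tau_h\|_{L^2}\lesssim h^{-1}\|\tau_h\|_{H^{-1}}$ locally on quasi-uniform meshes turns $(h/d)\|\tau_h\|$ into $d^{-1}\|\tau_h\|_{H^{-1}}$, provided the local discrete inverse estimate holds on the patch. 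I would verify this via elementwise bubble test functions.
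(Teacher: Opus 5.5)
Your proposal is correct and follows the paper's proof essentially step by step. The product rule for $\symcurl$ gives the defect decomposition, superapproximation plus scaled Korn modulo $\mathcal R(\omega_\chi^{++})$ bounds $R_{\chi,h}$, locality and the projection property of $J_h$ give the supports, and Lemma~\ref{lem:potential-negative-norm} together with the element-bubble inverse estimate $h\|\tau_h\|_{L^2(\omega_\chi^{++})}\lesssim\|\widetilde\tau_h\|_{H^{-1}(\Omega;\mathbb S)}$ gives the negative-norm bound.

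Your detour through $r'$ is unnecessary. Affine fields satisfy $\|\nabla r\|_{L^2(\omega_\chi^{++})}\lesssim d^{-1}\|r\|_{L^2(\omega_\chi^{++})}$ on this patch, so Korn's inequality and the $L^2$-best property of $r_\chi$ give $\|\nabla(\phi_h-r_\chi)\|_{L^2(\omega_\chi^{++})}\lesssim\|\tau_h\|_{L^2(\omega_\chi^{++})}$ directly. Your bound on $D^2(\chi r')$ already uses this same inverse inequality implicitly.
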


\begin{proof}
For sufficiently regular $v$,
\begin{equation*}
 \symcurl(\chi v)
 =
 \chi\symcurl v
 +
 \operatorname{sym}\bigl(
   v\otimes\nabla^\perp\chi
 \bigr).
\end{equation*}
With $v=\phi_h-r_\chi$, $\symcurl r_\chi=0$, and
$\symcurl\phi_h=\tau_h$, this gives
\[
 \symcurl\bigl(\chi(\phi_h-r_\chi)\bigr)
 =\chi\tau_h
 +\operatorname{sym}\bigl(
   (\phi_h-r_\chi)\otimes\nabla^\perp\chi
 \bigr).
\]
Together with \eqref{eq:HHJ-potential-localization-operator}, this gives
the defect decomposition.

The local superapproximation property of $J_h$, together with the
scaled Korn inequality modulo $\mathcal R(\omega_\chi^{++})$, yields
\begin{align*}
 \|R_{\chi,h}(\tau_h)\|_{L^2(U)}
 \lesssim
 \frac hd
 \|\nabla(\phi_h-r_\chi)\|_{L^2(\omega_\chi^{++})}
 +\frac h{d^2}
 \|\phi_h-r_\chi\|_{L^2(\omega_\chi^{++})}
\lesssim
 \frac hd
 \|\tau_h\|_{L^2(\omega_\chi^{++})}.
\end{align*}
Since $ \|\nabla\chi\|_{L^\infty(U)}
 \lesssim d^{-1}$, we have
\[
 \left\|
 \operatorname{sym}\bigl(
   (\phi_h-r_\chi)\otimes\nabla^\perp\chi
 \bigr)
 \right\|_{L^2(U)}
 \lesssim
 d^{-1}
 \|\phi_h-r_\chi\|_{L^2(\omega_\chi^{++})}.
\]
Moreover, the scaled Korn inequality modulo
$\mathcal R(\omega_\chi^{++})$ implies
\[
 d^{-1}
 \|\phi_h-r_\chi\|_{L^2(\omega_\chi^{++})}
 \lesssim
 \|\tau_h\|_{L^2(\omega_\chi^{++})}.
\]
These estimates give the $L^2$ defect bound.  To obtain the stronger
negative-norm bound, Lemma~\ref{lem:potential-negative-norm} gives
\[
 \|\phi_h-r_\chi\|_{L^2(\omega_\chi^{++})}
 \lesssim
 \|\tau_h\|_{H^{-1}(\omega_\chi^{++};\mathbb S)}
 \lesssim
 \|\widetilde\tau_h\|_{H^{-1}(\Omega;\mathbb S)}.
\]
For each $T\subset\omega_\chi^{++}$, let $b_T$ be the element bubble
and define, by zero extension,
\[
 \psi
 :=
 \sum_{T\subset\omega_\chi^{++}}
 b_T\,\tau_h|_T
 \in H_0^1(\Omega;\mathbb S).
\]
Polynomial norm equivalence and the inverse estimate give
\[
 (\widetilde\tau_h,\psi)_\Omega
 \gtrsim
 \|\tau_h\|_{L^2(\omega_\chi^{++})}^2,
 \qquad
 \|\nabla\psi\|_{L^2(\Omega)}
 \lesssim
 h^{-1}\|\tau_h\|_{L^2(\omega_\chi^{++})}.
\]
Hence $h\|\tau_h\|_{L^2(\omega_\chi^{++})}
 \lesssim
 \|\widetilde\tau_h\|_{H^{-1}(\Omega;\mathbb S)}$.  Substitution in
the defect decomposition gives the negative-norm bound.

Locality of $J_h$ and
\eqref{eq:nested-cutoff-separation} imply $ \supp\mathscr C_h(\chi,\tau_h)
 \subset
 (\supp\chi)^+
 \Subset
 \{\chi_0=1\}$.
Since $\DivDiv\symcurl=0$, the localized stress belongs to $Z_h$.
Outside $\omega_\chi^{++}$, $\chi$ is locally constant and
$\phi_h-r_\chi$ is a finite-element function, so locality and the
projection property of $J_h$ give
$\mathscr C_h(\chi,\tau_h)=\chi\tau_h$.
Hence the defect is supported in $\omega_\chi^{++}$.
\end{proof}

\section{Localized maximum-norm estimates}

Throughout this section, the triangulation remains shape-regular and
quasi-uniform.  We use the local constructions of
Section~\ref{sec:HHJ-framework-local-Green} to connect point evaluation
to the global discrete problem.

\subsection{The global discrete Green stress}
\leavevmode\par

Assume that $\mathcal K$ is constant.  For $F\in L^2(\Omega;\mathbb R^2)$, let
$\zeta_F\in H_0^1(\Omega;\mathbb R^2)$ satisfy
\begin{equation}
 \mathcal A_\Omega(\zeta_F,\psi)=(F,\psi)_\Omega
 \qquad
 \forall\psi\in H_0^1(\Omega;\mathbb R^2).
 \label{eq:HHJ-potential-Dirichlet-dual}
\end{equation}
We assume the analytic potential $H^2$ shift associated with
\eqref{eq:HHJ-potential-Dirichlet-dual}
\cite{AgmonDouglisNirenberg1959,Grisvard1985,Dauge1988}:
\begin{equation}
 \|\zeta_F\|_{H^2(\Omega)}
 \le C\|F\|_{L^2(\Omega)}.
 \label{eq:HHJ-potential-H2-shift}
\end{equation}
For the assertions restricted to $p\ge1$, we also assume the analytic
potential $H^3$ shift
\cite{AgmonDouglisNirenberg1959,Grisvard1985,Dauge1988}:
\begin{equation}
 \|\zeta_F\|_{H^3(\Omega)}
 \le C\|F\|_{H^1(\Omega)}
 \qquad
 \forall F\in H_0^1(\Omega;\mathbb R^2).
 \label{eq:HHJ-potential-H3-shift}
\end{equation}

For $m\in\{0,1\}$, let
$\Phi^{(m)}$ be the representative from
Lemma~\ref{lem:Green-potential-representatives}, and choose
$r_0^{(m)}\in\mathcal R(U)$ so that
\begin{equation}
 \|\Phi^{(m)}-r_0^{(m)}\|_{L^2(U)}
 \lesssim d\|G^{(m)}\|_{L^2(U)},
 \label{eq:HHJ-Green-potential-normalization}
\end{equation}
and define $v^{(m)}
 :=
 \chi_0(\Phi^{(m)}-r_0^{(m)})
 \in H_0^1(\Omega;\mathbb R^2)$ after extension by zero outside $U$.

Let $\Phi_h^{(m)}\in W_{h,0}$ be the global Galerkin
projection determined by
\begin{equation}
 \mathcal A_\Omega(\Phi_h^{(m)},\psi_h)
 =
 \mathcal A_\Omega(v^{(m)},\psi_h)
 \qquad
 \forall\psi_h\in W_{h,0},
 \label{eq:HHJ-potential-Galerkin-Green}
\end{equation}
and set
\begin{equation}
 G_h^{(m)}
 :=
 \symcurl\Phi_h^{(m)}.
 \label{eq:HHJ-potential-Galerkin-Green-stress}
\end{equation}
Both $\Phi_h^{(m)}$ and $G_h^{(m)}$ are global
fields, and their approximation defect is denoted by
$\varepsilon_h^{(m)}:=v^{(m)}-\Phi_h^{(m)}$.

\begin{lemma}[Global discrete Green-stress estimates]
For $m\in\{0,1\}$,
$G_h^{(m)}\in Z_h$.  If $\psi_h\in W_{h,0}$ and
$\supp\psi_h\Subset\{\chi_0=1\}$, then
\begin{equation}
 a_\Omega(G_h^{(m)},\symcurl\psi_h)
 =
 L_m(\symcurl\psi_h).
 \label{eq:HHJ-potential-Green-source-identity}
\end{equation}

The following estimates also hold:
\begin{equation}
 \|\symcurl\varepsilon_h^{(0)}\|_{L^2(\Omega)}
 \lesssim h^{-1}.
 \label{eq:HHJ-value-global-Galerkin-energy}
\end{equation}
\begin{equation*}
 \|\symcurl\varepsilon_h^{(1)}\|_{L^2(\Omega)}
 \lesssim h^{-2}.
\end{equation*}
\begin{equation}
 \|\varepsilon_h^{(0)}\|_{L^2(\Omega)}
 \lesssim 1.
 \label{eq:HHJ-value-global-Galerkin-L2}
\end{equation}
Under \eqref{eq:HHJ-potential-H3-shift}, for $p\ge1$ one also has
\begin{align}
 \|\varepsilon_h^{(0)}\|_{H^{-1}(\Omega;\mathbb R^2)}
 &\lesssim h,
 \label{eq:HHJ-value-global-Galerkin-negative}\\
 \|\varepsilon_h^{(1)}\|_{H^{-1}(\Omega;\mathbb R^2)}
 &\lesssim 1.
 \label{eq:HHJ-derivative-global-Galerkin-negative}
\end{align}
\end{lemma}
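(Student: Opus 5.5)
The plan runs through the five assertions in order: kernel membership, the source identity, the energy bounds, the $L^2$ bound, and the negative-norm bounds by duality.

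\emph{Kernel membership and source identity.} Since $\Phi_h^{(m)}\in W_{h,0}\subset W_h(\Omega)$, the proposition on the global HHJ kernel gives $G_h^{(m)}=\symcurl\Phi_h^{(m)}\in Z_h$ directly. For \eqref{eq:HHJ-potential-Green-source-identity}, take $\psi_h\in W_{h,0}$ with $\supp\psi_h\Subset\{\chi_0=1\}$. Then \eqref{eq:HHJ-potential-Galerkin-Green} gives $a_\Omega(G_h^{(m)},\symcurl\psi_h)=\mathcal A_\Omega(v^{(m)},\psi_h)$. On $\supp\psi_h$ we have $\chi_0\equiv1$, so $\symcurl v^{(m)}=\symcurl\Phi^{(m)}=G^{(m)}$ there, since $r_0^{(m)}$ lies in the kernel of $\symcurl$. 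This gives $a_U(G^{(m)},\symcurl\psi_h)$, which equals $L_m(\symcurl\psi_h)$ by the identity stated after Lemma~\ref{lem:HHJ-interior-Green-stresses}; that identity applies to compactly supported $H_0^1$ fields, and $\psi_h$ is one.

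\emph{Energy bounds.} Galerkin orthogonality gives $\|\symcurl\varepsilon_h^{(m)}\|_{L^2(\Omega)}\lesssim\|\symcurl v^{(m)}\|_{L^2(\Omega)}$. By the product rule from the proof of Lemma~\ref{lem:HHJ-symmetric-potential-commutator},
\[
\symcurl v^{(m)}=\chi_0G^{(m)}+\operatorname{sym}\bigl((\Phi^{(m)}-r_0^{(m)})\otimes\nabla^\perp\chi_0\bigr).
\]
The first term is bounded by Lemma~\ref{lem:HHJ-interior-Green-stresses}. The second is at most $d^{-1}\|\Phi^{(m)}-r_0^{(m)}\|_{L^2(U)}\lesssim\|G^{(m)}\|_{L^2(U)}$ by \eqref{eq:HHJ-Green-potential-normalization}. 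This yields $h^{-1}$ for $m=0$ and $h^{-2}$ for $m=1$.

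A sharper version is available and will be needed below. The second term is supported in the annulus $B_{9d/4}\setminus B_{2d}$. There, applying Korn modulo $\mathcal R$ on the annulus together with the decay estimates of Lemma~\ref{lem:HHJ-interior-Green-stresses} (with $\delta\simeq d$), and choosing $r_0^{(m)}$ as the annular best fit, gives an $O(d^{-1})$ bound for $m=0$. I would therefore normalize $r_0$ on that annulus, or use the $G$-decay, to keep the far-field smooth.

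\emph{$L^2$ bound \eqref{eq:HHJ-value-global-Galerkin-L2}.} This is an Aubin--Nitsche argument with the $H^2$ shift \eqref{eq:HHJ-potential-H2-shift}. Take $F=\varepsilon_h^{(0)}$ and $\zeta=\zeta_F$. Symmetry of $\mathcal A_\Omega$ and orthogonality give
\[
\|\varepsilon_h^{(0)}\|^2=\mathcal A_\Omega(\varepsilon_h^{(0)},\zeta-J_h\zeta)\lesssim\|\symcurl\varepsilon_h^{(0)}\|\,h\|\zeta\|_{H^2}\lesssim h^{-1}\cdot h\|\varepsilon_h^{(0)}\|,
\]
hence $\|\varepsilon_h^{(0)}\|_{L^2(\Omega)}\lesssim1$.

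\emph{Negative-norm bounds.} For \eqref{eq:HHJ-value-global-Galerkin-negative}--\eqref{eq:HHJ-derivative-global-Galerkin-negative} with $p\ge1$, take $F\in H_0^1$. Then $(\varepsilon_h,F)=\mathcal A_\Omega(\varepsilon_h,\zeta_F-J_h\zeta_F)$. The $H^3$ shift gives $\|\symcurl(\zeta_F-J_h\zeta_F)\|\lesssim h^2\|F\|_{H^1}$, so a naive bound is $h^2\cdot h^{-1}=h$ for $m=0$ and $h^2\cdot h^{-2}=1$ for $m=1$. This already gives exactly the claimed rates, since $\|F\|_{H^1}\simeq\|\nabla F\|$ on $H_0^1$. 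The only subtlety is checking that the interpolation error is measured in the global $L^2$ norm, with $J_h$ approximating $\zeta_F\in H^3$ at order $h^2$; this uses $p+2\ge3$, i.e.\ $p\ge1$.

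\emph{Main obstacle.} The main difficulty is therefore making sure the energy estimate is genuinely global. One must justify that $v^{(m)}\in H_0^1(\Omega)$ has energy controlled by $\|G^{(m)}\|_{L^2(U)}$ through \eqref{eq:HHJ-Green-potential-normalization}. That normalization relies on Korn on the ball $U$ with scaling constant $d$, which I would invoke via Lemma~\ref{lem:potential-negative-norm} or the standard Korn inequality on $U$.
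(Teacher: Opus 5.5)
Your proposal is correct and follows the paper's proof step for step. The steps are: kernel membership from the discrete exact sequence; the source identity via $\symcurl v^{(m)}=G^{(m)}$ on $\{\chi_0=1\}$ together with the local Green identity; the energy bounds from Galerkin stability, the cutoff product rule and the normalization of $r_0^{(m)}$; and the $L^2$ and $H^{-1}$ bounds by duality with $\zeta_F$ and the $H^2$/$H^3$ shifts. The only deviation is your ``sharper'' annular renormalization of $r_0^{(m)}$, which is harmless but unnecessary: despite your remark that it ``will be needed below,'' every later step uses only the crude bounds $h^{-1}$ and $h^{-2}$.
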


\begin{proof}
The normalization \eqref{eq:HHJ-Green-potential-normalization} follows
from Korn's inequality modulo $\mathcal R(U)$.  Korn's inequality on
$H_0^1(\Omega;\mathbb R^2)$ and the positive definiteness of
$\mathcal K$ imply coercivity and unique solvability of
\eqref{eq:HHJ-potential-Galerkin-Green}.
The global HHJ complex then implies
\[
 G_h^{(m)}
 =\symcurl\Phi_h^{(m)}\in Z_h.
\]

Let $\psi_h$ satisfy the support condition in the statement.  On a
neighborhood of its support,
$\symcurl v^{(m)}=\symcurl\Phi^{(m)}=G^{(m)}$.  The global Galerkin
equation and the local Green identity yield
\begin{align*}
 a_\Omega(G_h^{(m)},\symcurl\psi_h)
 &=\mathcal A_\Omega(v^{(m)},\psi_h)
 =a_U(G^{(m)},\symcurl\psi_h)
 =L_m(\symcurl\psi_h).
\end{align*}

The Galerkin error satisfies
\begin{equation*}
 \mathcal A_\Omega(\varepsilon_h^{(m)},\psi_h)=0
 \qquad
 \forall\psi_h\in W_{h,0}.
\end{equation*}
By Galerkin stability, the product rule for $\symcurl$, and
\eqref{eq:HHJ-Green-potential-normalization}, we have
\begin{align*}
 \|\symcurl\varepsilon_h^{(m)}\|_{L^2(\Omega)}
 &\lesssim \|\symcurl v^{(m)}\|_{L^2(U)}\\
 &\lesssim \left(
   \|G^{(m)}\|_{L^2(U)}
   +d^{-1}\|\Phi^{(m)}-r_0^{(m)}\|_{L^2(U)}
 \right)
 \lesssim \|G^{(m)}\|_{L^2(U)}.
\end{align*}
Lemma~\ref{lem:HHJ-interior-Green-stresses} now gives the first two
energy estimates in the statement.

For the weak-norm bounds, let $F\in L^2(\Omega;\mathbb R^2)$
and let $\zeta_F$ solve
\eqref{eq:HHJ-potential-Dirichlet-dual}.  Galerkin orthogonality, the
degree-$(p+1)$ potential approximation, and
\eqref{eq:HHJ-potential-H2-shift} yield
\begin{align*}
 |(\varepsilon_h^{(m)},F)_\Omega|
 &=|\mathcal A_\Omega(\varepsilon_h^{(m)},
          \zeta_F-J_h\zeta_F)|
  \lesssim h
 \|\symcurl\varepsilon_h^{(m)}\|_{L^2(\Omega)}
 \|F\|_{L^2(\Omega)}.
\end{align*}
Taking $m=0$ and using
\eqref{eq:HHJ-value-global-Galerkin-energy} gives
\eqref{eq:HHJ-value-global-Galerkin-L2}. For $p\ge1$ and
$F\in H_0^1(\Omega;\mathbb R^2)$, quadratic potential approximation
and \eqref{eq:HHJ-potential-H3-shift} instead yield
\begin{align*}
 |(\varepsilon_h^{(m)},F)_\Omega|
  &\lesssim h^2
 \|\symcurl\varepsilon_h^{(m)}\|_{L^2(\Omega)}
 \|F\|_{H^1(\Omega)}.
\end{align*}
Taking the supremum over $F$ and using
the two energy estimates above leads to
\eqref{eq:HHJ-value-global-Galerkin-negative} and
\eqref{eq:HHJ-derivative-global-Galerkin-negative}.
\end{proof}

\begin{lemma}[Interior Galerkin estimate for the potential problem]
\label{lem:HHJ-interior-potential-Galerkin}
Let $D\Subset D^+\Subset\Omega$ be mesh-fitted patches with
\begin{equation*}
 \dist(D,\partial D^+)\simeq\delta,
 \qquad
 \delta\ge\kappa_0h.
\end{equation*}
Let $v\in H_0^1(\Omega;\mathbb R^2)$ with
$v|_{D^+}\in H^2(D^+;\mathbb R^2)$, and let
$v_h\in W_{h,0}$ be its Galerkin projection with respect to
$\mathcal A_\Omega$.  For $e_h:=v-v_h$ and $s\in\{0,1\}$,
\begin{equation*}
 \|\symcurl e_h\|_{L^2(D)}
 \lesssim
 h|v|_{H^2(D^+)}
 +\delta^{-1-s}
 \|e_h\|_{H^{-s}(\Omega;\mathbb R^2)}.
\end{equation*}
The case $s=1$ is used below only with the corresponding global
$H^{-1}$ estimate.
\end{lemma}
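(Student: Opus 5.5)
This is a Nitsche--Schatz interior energy estimate for the vector potential problem with the coercive form $\mathcal A_\Omega$, which is coercive modulo $\mathcal R$ locally. I would follow the classical argument, with Korn-type inequalities modulo $\mathcal R(D)$ replacing Poincaré. Introduce an intermediate patch $D\Subset D'\Subset D^+$, each separation $\simeq\delta$, and a cutoff $\omega\in C_0^\infty(D')$ with $\omega=1$ on $D$ and $\|D^j\omega\|_{L^\infty}\lesssim\delta^{-j}$. Split
\[
e_h=(v-J_hv)+(J_hv-v_h)=:\eta+\theta_h .
\]
The Scott--Zhang bound gives $\|\symcurl\eta\|_{L^2(D')}\lesssim h|v|_{H^2(D^+)}$, so it suffices to bound $\|\symcurl\theta_h\|_{L^2(D)}$.

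\emph{Local energy step.} Since $\mathcal A_\Omega(\theta_h,\psi_h)=-\mathcal A_\Omega(\eta,\psi_h)$ for all $\psi_h\in W_{h,0}$, I test with $\psi_h=J_h(\omega^2\theta_h)$, which is supported in $D'^+\Subset D^+$. Using the product rule $\symcurl(\omega\phi)=\omega\symcurl\phi+\operatorname{sym}(\phi\otimes\nabla^\perp\omega)$ (as in the proof of Lemma~\ref{lem:HHJ-symmetric-potential-commutator}) and superapproximation of $J_h$, the standard absorption gives
\[
\|\symcurl\theta_h\|_{L^2(D)}\lesssim h|v|_{H^2(D^+)}+\Bigl(\tfrac h\delta+\epsilon\Bigr)\|\symcurl\theta_h\|_{L^2(D')}+C_\epsilon\delta^{-1}\|\theta_h\|_{L^2(D')} .
\]
Here $h/\delta\le\kappa_0^{-1}$ is small.

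\emph{Removing the energy term and the weak norm.} Iterate over a sequence of nested patches, or apply a kickback over a covering by balls of radius $\simeq\delta$. This reduces to
\[
\|\symcurl e_h\|_{L^2(D)}\lesssim h|v|_{H^2(D^+)}+\delta^{-1}\|e_h\|_{L^2(D'')} .
\]
I use $\|\theta_h\|_{L^2}\le\|e_h\|_{L^2}+\|\eta\|_{L^2}$ and $\|\eta\|_{L^2(D')}\lesssim h^2|v|_{H^2}$. This settles the case $s=0$.

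\emph{The case $s=1$.} Here one must pass from $\delta^{-1}\|e_h\|_{L^2(D'')}$ to $\delta^{-2}\|e_h\|_{H^{-1}}$. I would use the standard local duality: for $F\in C_0^\infty(D'';\mathbb R^2)$ with $\|F\|_{L^2}=1$, solve the Dirichlet potential problem on a smooth local domain $\widetilde D\Supset D''$, giving $\zeta$ with $\|\zeta\|_{H^2}\lesssim1$ in the interior. Cut it off with $\omega'$ and write
\[
(e_h,F)=\mathcal A(e_h,\omega'\zeta)+(\text{commutator terms}) .
\]
The first term is Galerkin-orthogonal to $J_h(\omega'\zeta)$, so it gives $h\|\symcurl e_h\|_{L^2(D^+)}$, which is absorbed. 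The commutator terms involve $e_h$ against $\nabla\omega'\cdot\nabla\zeta$ and similar factors, and are bounded by $\delta^{-1}\|e_h\|_{H^{-1}}\|\zeta\|_{H^2}$.

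\textbf{Main obstacle.} The $s=1$ duality is the hard step. The commutators only yield $H^{-1}$ control if $\zeta$ has $H^2$ regularity, with a weighted $H^3$ bound on the support of $\nabla\omega'$ for the $\delta^{-1}$ scaling. This needs interior regularity of the potential system $\operatorname{curl}^*\mathcal K\symcurl$, uniform under scaling, and is where I would expect the work.
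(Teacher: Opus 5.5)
Your route is the paper's route, unpacked. The paper checks strong ellipticity, $|\symcurl_\xi z|^2\ge c|\xi|^2|z|^2$, and Korn's inequality on $H_0^1(\Omega;\mathbb R^2)$. It then cites the Nitsche--Schatz/Schatz--Wahlbin interior estimate for $\|\nabla e_h\|_{L^2(D)}$ and finishes with the $J_h$ approximation bound and $\|\symcurl e_h\|_{L^2(D)}\le\|\nabla e_h\|_{L^2(D)}$. You re-derive the cited theorem, which is legitimate, but one step fails as written.

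For $s=1$, take nested patches $D\Subset D^{(1)}\Subset D^{(2)}\Subset D^+$ with separations $\simeq\delta$. Inserting your duality bound into the $s=0$ estimate gives
\[
 \|\symcurl e_h\|_{L^2(D)}
 \lesssim h|v|_{H^2(D^+)}
 +\frac h\delta\|\symcurl e_h\|_{L^2(D^{(1)})}
 +\delta^{-2}\|e_h\|_{H^{-1}(\Omega;\mathbb R^2)} .
\]
The middle term lives on a strictly larger set, so the smallness of $h/\delta$ does not let you ``absorb'' it. What closes the argument is a discrete inverse estimate, which is where quasi-uniformity enters.
\begin{itemize}
\item Apply the $s=0$ bound once more on $D^{(1)}\Subset D^{(2)}$. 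This turns the middle term into $h|v|_{H^2(D^+)}+h\delta^{-2}\|e_h\|_{L^2(D^{(2)})}$.
\item Split $e_h=\eta+\theta_h$ and treat the discrete part with the element-bubble argument from the proof of Lemma~\ref{lem:HHJ-symmetric-potential-commutator}: $\|\theta_h\|_{L^2(D^{(2)})}\lesssim h^{-1}\|e_h\|_{H^{-1}(\Omega;\mathbb R^2)}+\|\eta\|_{L^2(D^{(2)})}$.
\item Use $\|\eta\|_{L^2(D^{(2)})}\lesssim h^2|v|_{H^2(D^+)}$ and $h^3\delta^{-2}\le h$.
\end{itemize}
This yields exactly $h|v|_{H^2(D^+)}+\delta^{-2}\|e_h\|_{H^{-1}(\Omega;\mathbb R^2)}$. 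A single round, or a direct inverse estimate on $\|\symcurl\theta_h\|$, only gives $(h\delta)^{-1}\|e_h\|_{H^{-1}}$, which is too weak.

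Conversely, the step you flag as the main obstacle is not one. Every commutator term can be written as $(e_h,g)$ with $g\in H_0^1$ built from $\nabla\zeta\,\nabla\omega'$ and $\zeta\,D^2\omega'$, so
\[
 \|g\|_{H^1}\lesssim\delta^{-1}|\zeta|_{H^2}+\delta^{-2}|\zeta|_{H^1}+\delta^{-3}\|\zeta\|_{L^2}\lesssim\delta^{-1}\|F\|_{L^2}.
\]
This needs only the scaled energy bounds $|\zeta|_{H^1}\lesssim\delta\|F\|_{L^2}$ and $\|\zeta\|_{L^2}\lesssim\delta^2\|F\|_{L^2}$, plus interior $H^2$ regularity. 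The latter is classical for this constant-coefficient strongly elliptic system, whose local Dirichlet problem is coercive on $H_0^1(\widetilde D)$ by Korn's inequality. Moreover, $\omega'=1$ on $\supp F$, so $\zeta$ is a homogeneous solution on $\supp\nabla\omega'$. No weighted $H^3$ estimate is required.
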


\begin{proof}
Since $\mathcal K$ is constant, $\mathcal A_\Omega$ is a
constant-coefficient second-order system.  Its principal symbol satisfies
\[
 |\symcurl_\xi z|^2\ge c|\xi|^2|z|^2,
 \qquad \xi\ne0,
\]
so the system is strongly elliptic.  Korn's inequality on
$H_0^1(\Omega;\mathbb R^2)$ gives
$\|\nabla w\|_{L^2(\Omega)}\simeq
 \|\symcurl w\|_{L^2(\Omega)}$; hence the affine kernel causes no
coercivity obstruction for the global Galerkin problem.

The interior Galerkin estimate for conforming strongly elliptic systems
\cite{NitscheSchatz1974,SchatzWahlbin1995}, applied to this vector
problem, gives
\[
 \|\nabla e_h\|_{L^2(D)}
 \lesssim
 \inf_{w_h\in W_{h,0}}
 \|\nabla(v-w_h)\|_{L^2(D^+)}
 +\delta^{-1-s}
 \|e_h\|_{H^{-s}(\Omega;\mathbb R^2)},
 \qquad s=0,1.
\]
The powers of $\delta$ are those of the scaled negative-norm interior
estimate: one inverse length accompanies the $L^2$ pollution term, and
passing from $L^2$ to $H^{-1}$ contributes one further inverse length.
The local approximation property of $J_h$ bounds the first term by
$h|v|_{H^2(D^+)}$.  Finally,
$\|\symcurl e_h\|_{L^2(D)}\lesssim
 \|\nabla e_h\|_{L^2(D)}$, which proves the claimed estimate.
\end{proof}

\begin{lemma}[Local discrete Green estimates]
\label{lem:HHJ-local-discrete-Green-estimates}
Let $D_\delta$ be an annular mesh patch separated from the source
element by a distance comparable to $\delta$, where
$\kappa_0 h\le\delta\le Cd$, and let $D_\delta^+$ be a fixed enlargement
such that
\begin{equation*}
 D_\delta\Subset D_\delta^+
 \Subset\{\chi_0=1\},
 \qquad
 \dist(D_\delta,\partial D_\delta^+)\ge c\delta.
\end{equation*}
The following estimates hold:
\begin{align*}
 \|G_h^{(0)}\|_{L^2(D_\delta)}
 +
 \delta^{-1}
 \inf_{r\in\mathcal R(D_\delta)}
 \|\Phi_h^{(0)}-r\|_{L^2(D_\delta)}
 &\lesssim \delta^{-1},
 \\
 \|G_h^{(1)}\|_{L^2(D_\delta)}
 +
 \delta^{-1}
 \inf_{r\in\mathcal R(D_\delta)}
 \|\Phi_h^{(1)}-r\|_{L^2(D_\delta)}
 &\lesssim \delta^{-2},
 \qquad p\ge1.
\end{align*}
For the value Galerkin error, one also has
\begin{equation}
 \|\symcurl\varepsilon_h^{(0)}\|_{L^2(D_\delta)}
 \lesssim h\delta^{-2},
 \qquad p\ge1.
 \label{eq:HHJ-value-annular-Galerkin-error}
\end{equation}
The same scale bounds for
$G_h^{(0)}$ and
$G_h^{(1)}$
hold on the source patch
$D_0:=B_{\kappa_0 h}(x_0)\cap U$,
with $\delta$ replaced by $h$.
\end{lemma}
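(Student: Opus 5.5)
The plan is to split each discrete Green quantity into its continuous counterpart and the Galerkin defect, $G_h^{(m)}=\symcurl v^{(m)}-\symcurl\varepsilon_h^{(m)}$. On $D_\delta\subset\{\chi_0=1\}$ we have $\symcurl v^{(m)}=G^{(m)}$ and $v^{(m)}=\Phi^{(m)}-r_0^{(m)}$. Lemma~\ref{lem:HHJ-interior-Green-stresses} then controls the continuous part by $\delta^{-1}$ for $m=0$ and $\delta^{-2}$ for $m=1$, together with $\delta$ times these bounds for the gradients. For the potential part, Lemma~\ref{lem:potential-negative-norm}, or the scaled $L^2$ Korn inequality modulo $\mathcal R(D_\delta)$, gives
\[
\inf_{r\in\mathcal R(D_\delta)}\|\Phi_h^{(m)}-r\|_{L^2(D_\delta)}\lesssim\delta\|G_h^{(m)}\|_{L^2(D_\delta)}.
\]
This holds because $D_\delta$ lies in the uniformly Lipschitz patch family and has diameter comparable to $\delta$. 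Hence it suffices to bound $\|G_h^{(m)}\|_{L^2(D_\delta)}$, which in turn reduces to bounding $\|\symcurl\varepsilon_h^{(m)}\|_{L^2(D_\delta)}$.

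For the defect I apply Lemma~\ref{lem:HHJ-interior-potential-Galerkin} with $D=D_\delta$, $D^+=D_\delta^+$ and $v=v^{(m)}$. This gives
\[
\|\symcurl\varepsilon_h^{(m)}\|_{L^2(D_\delta)}\lesssim h|\Phi^{(m)}|_{H^2(D_\delta^+)}+\delta^{-1-s}\|\varepsilon_h^{(m)}\|_{H^{-s}(\Omega)}.
\]
Since $\nabla^2\Phi$ is controlled by $\nabla G$ through the standard pointwise identity for second derivatives of a symmetric-curl potential (the analogue of the Saint-Venant formula), the local term is bounded by $h\|\nabla G^{(m)}\|_{L^2(D_\delta^+)}$. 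This is at most $h\delta^{-2}\le\delta^{-1}$ for $m=0$ and $h\delta^{-3}\le\delta^{-2}$ for $m=1$, using $\delta\ge\kappa_0h$.

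For the pollution term in the value case I take $s=0$ with \eqref{eq:HHJ-value-global-Galerkin-L2}, which gives $\delta^{-1}\cdot1$. This is the claimed bound for every $p$. When $p\ge1$ I instead take $s=1$ with \eqref{eq:HHJ-value-global-Galerkin-negative}, giving $\delta^{-2}h$. Combined with the local term $h\delta^{-2}$, this yields \eqref{eq:HHJ-value-annular-Galerkin-error}. For $m=1$ I take $s=1$ with \eqref{eq:HHJ-derivative-global-Galerkin-negative}, which gives $\delta^{-2}$. All required inclusions hold since $D_\delta^+\Subset\{\chi_0=1\}\Subset\Omega$.

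On the source patch $D_0$ the interior argument is not available, so I use the global energy bounds directly:
\[
\|G_h^{(0)}\|_{L^2(D_0)}\le\|\symcurl\Phi_h^{(0)}\|_{L^2(\Omega)}\lesssim\|\symcurl v^{(0)}\|_{L^2}\lesssim\|G^{(0)}\|_{L^2(U)}\lesssim h^{-1}.
\]
The same reasoning gives $h^{-2}$ for $m=1$. The potential term is then handled again by scaled Korn on $D_0$, whose diameter is comparable to $h$.

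The main obstacle I expect is the local $H^2$ seminorm of $\Phi^{(m)}$ on an annulus: it has to be bounded by $\|\nabla G^{(m)}\|$ rather than by higher norms. I would use the explicit second-derivative representation of $\nabla^2\Phi$ in terms of first derivatives of $\symcurl\Phi$, which is linear and pointwise, so the bound is immediate. A second concern is that the cited interior estimate needs mesh-fitted, uniformly Lipschitz annuli, and I would take this as given by the construction of the patch family.
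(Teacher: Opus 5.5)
Your proposal is correct and follows the paper's proof. You split $G_h^{(m)}=G^{(m)}-\symcurl\varepsilon_h^{(m)}$ on $D_\delta^+$ and apply the interior Galerkin estimate, with $s=0$ paired with the global $L^2$ bound or $s=1$ paired with the global $H^{-1}$ bounds, then finish with the continuous Green bounds, scaled Korn modulo $\mathcal R(D_\delta)$, and the global energy bound on $D_0$. The only difference is that you control $|\Phi^{(m)}|_{H^2}$ by $\|\nabla G^{(m)}\|_{L^2}$ through the pointwise Saint-Venant-type identity on the same patch, whereas the paper uses a scaled local second-order estimate that requires an intermediate patch $D_\delta\Subset D_\delta'\Subset D_\delta^+$; your identity is valid after the rotation used in Lemma~\ref{lem:potential-negative-norm} and makes that step slightly cleaner.
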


\begin{proof}
For every patch $D\Subset D^+$ in a connected, uniformly Lipschitz
family and every $\Phi\in H^2(D^+;\mathbb R^2)$, scaling the local
second-order estimate yields
\begin{equation}
 |\Phi|_{H^2(D)}
 \lesssim \|\nabla(\symcurl\Phi)\|_{L^2(D^+)}.
 \label{eq:local-higher-order-symcurl-Korn}
\end{equation}
Choose a fixed intermediate patch
$D_\delta\Subset D_\delta'\Subset D_\delta^+$ with both separations
comparable to $\delta$.  Apply
Lemma~\ref{lem:HHJ-interior-potential-Galerkin} to
$e_h=\varepsilon_h^{(m)}$ on $D_\delta\Subset D_\delta'$ and apply
\eqref{eq:local-higher-order-symcurl-Korn} on
$D_\delta'\Subset D_\delta^+$.  Since
$v^{(m)}=\Phi^{(m)}-r_0^{(m)}$ on $D_\delta^+$ and affine fields do
not contribute to the $H^2$ seminorm, it follows that
\[
 h|v^{(m)}|_{H^2(D_\delta')}
 \lesssim
 h\|\nabla G^{(m)}\|_{L^2(D_\delta^+)}.
\]
Consequently, for $s=0,1$,
\begin{equation*}
 \|\symcurl\varepsilon_h^{(m)}\|_{L^2(D_\delta)}
 \lesssim
 h\|\nabla G^{(m)}\|_{L^2(D_\delta^+)}
 +\delta^{-1-s}
 \|\varepsilon_h^{(m)}\|_{H^{-s}(\Omega;\mathbb R^2)}.
\end{equation*}
Here $s=1$ is used only for $p\ge1$.

It follows from Lemma~\ref{lem:HHJ-interior-Green-stresses} that
\begin{align*}
 \|G^{(0)}\|_{L^2(D_\delta^+)}
 +\delta\|\nabla G^{(0)}\|_{L^2(D_\delta^+)}
 &\lesssim \delta^{-1},\\
 \|G^{(1)}\|_{L^2(D_\delta^+)}
 +\delta\|\nabla G^{(1)}\|_{L^2(D_\delta^+)}
 &\lesssim \delta^{-2}.
\end{align*}
Set $s=0$ in this interior estimate and combine it with
\eqref{eq:HHJ-value-global-Galerkin-L2} and $h\le C\delta$.  This gives
$\|\symcurl\varepsilon_h^{(0)}\|_{L^2(D_\delta)}
\lesssim\delta^{-1}$.  For $p\ge1$, setting $s=1$ and using
\eqref{eq:HHJ-value-global-Galerkin-negative} proves
\eqref{eq:HHJ-value-annular-Galerkin-error}.  The derivative estimate
\eqref{eq:HHJ-derivative-global-Galerkin-negative} similarly gives
\[
 \|\symcurl\varepsilon_h^{(1)}\|_{L^2(D_\delta)}
 \lesssim \delta^{-2}.
\]

On $D_\delta^+$,
$G_h^{(m)}=G^{(m)}-\symcurl\varepsilon_h^{(m)}$.
Thus the stress parts of the first two estimates follow.  On the
uniformly Lipschitz patch family, the resulting scaled estimate modulo
$\mathcal R(D_\delta)$ takes the form
\[
 \delta^{-1}
 \inf_{r\in\mathcal R(D_\delta)}
 \|\Phi_h^{(m)}-r\|_{L^2(D_\delta)}
 \lesssim
 \|G_h^{(m)}\|_{L^2(D_\delta)}.
\]
On $D_0:=B_{\kappa_0 h}(x_0)\cap U$, the global energy bounds and Korn's
inequality modulo $\mathcal R(D_0)$ give the same estimates with $\delta$
replaced by $h$, since $\kappa_0$ is fixed.
\end{proof}

Applying Lemma~\ref{lem:HHJ-local-discrete-Green-estimates} to
$\omega_\chi^{++}$ with $\delta=d$ gives
\begin{align}
 \|G_h^{(0)}\|_{L^2(\omega_\chi^{++})}
 +d^{-1}
 \inf_{r\in\mathcal R(\omega_\chi^{++})}
 \|\Phi_h^{(0)}-r\|_{L^2(\omega_\chi^{++})}
 &\lesssim d^{-1},
 \label{eq:HHJ-value-transition-discrete-Green}\\
 \|G_h^{(1)}\|_{L^2(\omega_\chi^{++})}
 +d^{-1}
 \inf_{r\in\mathcal R(\omega_\chi^{++})}
 \|\Phi_h^{(1)}-r\|_{L^2(\omega_\chi^{++})}
 &\lesssim d^{-2},
 \qquad p\ge1.
 \label{eq:HHJ-derivative-transition-discrete-Green}
\end{align}

\subsection{Localized discrete Green identity and commutator estimate}

\begin{lemma}[Localized discrete Green identity]
\label{lem:HHJ-localized-discrete-Green-identity}
For $m\in\{0,1\}$, the following identity
holds:
\begin{align*}
L_m(\xi_h)
={}
-a_U\!\left(
\rho_\sigma,
\mathscr C_h(\chi,G_h^{(m)})
\right)
+
a_U\!\left(
G_h^{(m)},
\mathscr C_h(\chi,\xi_h)
\right)
-
a_U\!\left(
\xi_h,
\mathscr C_h(\chi,G_h^{(m)})
\right).
\end{align*}
\end{lemma}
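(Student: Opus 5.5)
The plan is to combine the source identity \eqref{eq:HHJ-potential-Green-source-identity} for the global discrete Green stress with the kernel error equation \eqref{eq:kernel-error}. Both right-hand terms involving $\mathscr C_h$ are admissible tests, because Lemma~\ref{lem:HHJ-symmetric-potential-commutator} places every localized field in $Z_h$ with support inside $\{\chi_0=1\}\Subset U$. The last two terms of the asserted identity will cancel against the kernel equation. The middle term will reproduce $L_m(\xi_h)$.

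\textbf{Step 1 (the middle term reproduces $L_m(\xi_h)$).} Apply the construction \eqref{eq:HHJ-potential-localization-operator} with $\widetilde\tau_h=\xi_h\in Z_h$. This gives $\mathscr C_h(\chi,\xi_h)=\symcurl\psi_h$ with
\[
 \psi_h:=J_h\bigl(\operatorname{Ext}_0[\chi(\phi_h-r_\chi)]\bigr).
\]
Since $\chi(\phi_h-r_\chi)$ lies in $H_0^1$ after zero extension, $\psi_h\in W_{h,0}$. Its support lies in $(\supp\chi)^+\Subset\{\chi_0=1\}$ by \eqref{eq:nested-cutoff-separation}. Hence \eqref{eq:HHJ-potential-Green-source-identity} gives
\[
 a_\Omega\bigl(G_h^{(m)},\mathscr C_h(\chi,\xi_h)\bigr)=L_m\bigl(\mathscr C_h(\chi,\xi_h)\bigr).
\]
The left side equals $a_U(\cdot,\cdot)$ because of the support. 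Now $L_m$ only sees the source element $T_0$, which contains $x_0$ and has diameter $O(h)$. For $\kappa_0$ large, $T_0\subset B_d(x_0)$, where $\chi=1$, and $T_0$ is disjoint from $\omega_\chi^{++}$, which is a fixed-layer enlargement of the transition region near $\partial B_d\cup\partial B_{3d/2}$. The defect support statement of Lemma~\ref{lem:HHJ-symmetric-potential-commutator} then gives $\mathscr C_h(\chi,\xi_h)=\chi\xi_h=\xi_h$ on $T_0$. Therefore
\[
 L_m\bigl(\mathscr C_h(\chi,\xi_h)\bigr)=L_m(\xi_h).
\]

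\textbf{Step 2 (the other two terms cancel).} The global discrete Green stress satisfies $G_h^{(m)}\in Z_h$ by the global discrete Green-stress lemma. So $\mathscr C_h(\chi,G_h^{(m)})$ is defined with $\widetilde\tau_h=G_h^{(m)}$, and it belongs to $Z_h$ with support in $U$. Testing \eqref{eq:kernel-error} with $\tau_h=\mathscr C_h(\chi,G_h^{(m)})$ gives
\[
 a_U\bigl(\xi_h,\mathscr C_h(\chi,G_h^{(m)})\bigr)=-a_U\bigl(\rho_\sigma,\mathscr C_h(\chi,G_h^{(m)})\bigr),
\]
where again $a_\Omega$ reduces to $a_U$ by support. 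Equivalently,
\[
 -a_U\bigl(\rho_\sigma,\mathscr C_h(\chi,G_h^{(m)})\bigr)-a_U\bigl(\xi_h,\mathscr C_h(\chi,G_h^{(m)})\bigr)=0.
\]
Adding this to the identity of Step 1 yields the claim.

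\textbf{Main obstacle.} The only delicate point is the geometric bookkeeping in Step 1: showing that the localization leaves $\xi_h$ unchanged on the source element. This requires $T_0\cap\omega_\chi^{++}=\varnothing$ and $\chi\equiv1$ on $T_0$. I would settle it by enlarging $\kappa_0$ so that fixed-layer patches have width at most $Ch\le d/8$. The remaining parts are direct applications of the support and kernel properties already established.
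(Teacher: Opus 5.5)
Your proposal is correct and follows the paper's proof essentially step by step. You reproduce $L_m(\xi_h)$ through the source identity after checking that $\mathscr C_h(\chi,\xi_h)=\xi_h$ on $T_0$, and you cancel the remaining two terms by testing the kernel equation with $\mathscr C_h(\chi,G_h^{(m)})\in Z_h$.

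The only difference is cosmetic. You obtain the equality on $T_0$ from the defect-support statement $\supp(\mathscr C_h(\chi,\tau_h)-\chi\tau_h)\subset\omega_\chi^{++}$, whereas the paper invokes locality and the projection property of $J_h$ on the interpolation patch of $T_0$ directly. That is how the defect-support statement was proved in the first place.
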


\begin{proof}
Let $\phi_h=\left.R_h\xi_h\right|_U$ and choose $r_\chi\in\mathcal R(\omega_\chi^{++})$ as in
Lemma~\ref{lem:HHJ-symmetric-potential-commutator}.
For $\kappa_0$ sufficiently large, the interpolation patch of the source
element $T_0$ is contained in the region where $\chi=1$.
The locality and projection properties of $J_h$ therefore imply
$\mathscr C_h(\chi,\xi_h)|_{T_0}=\xi_h|_{T_0}$.  Hence
\begin{equation*}
L_m(\xi_h)
=
L_m\bigl(\mathscr C_h(\chi,\xi_h)\bigr).
\end{equation*}

By locality of $J_h$ and \eqref{eq:nested-cutoff-separation},
\[
J_h\bigl(\operatorname{Ext}_0[\chi(\phi_h-r_\chi)]\bigr)
\in W_{h,0},
\qquad
\supp J_h\bigl(\operatorname{Ext}_0[\chi(\phi_h-r_\chi)]\bigr)
\Subset\{\chi_0=1\}.
\]
The source identity \eqref{eq:HHJ-potential-Green-source-identity} gives
\begin{align*}
L_m(\xi_h)
&=
a_\Omega\!\left(
G_h^{(m)},
\mathscr C_h(\chi,\xi_h)
\right)
=
a_U\!\left(
G_h^{(m)},
\mathscr C_h(\chi,\xi_h)
\right),
\end{align*}
where the second equality follows from
$\supp\mathscr C_h(\chi,\xi_h)\Subset U$.

Since $G_h^{(m)}\in Z_h$,
Lemma~\ref{lem:HHJ-symmetric-potential-commutator} implies
\[
\mathscr C_h(\chi,G_h^{(m)})
\in Z_h,
\qquad
\supp\mathscr C_h(\chi,G_h^{(m)})
\Subset U.
\]
Define the localized Green stress by
\[
 Q_h^{(m)}:=\mathscr C_h(\chi,G_h^{(m)}).
\]
Testing \eqref{eq:kernel-error} with $Q_h^{(m)}$ yields
\begin{align*}
a_U(\xi_h,Q_h^{(m)})
=
a_\Omega(\xi_h,Q_h^{(m)})
=
-a_\Omega(\rho_\sigma,Q_h^{(m)})
=
-a_U(\rho_\sigma,Q_h^{(m)}).
\end{align*}

Adding and subtracting $a_U\!\left(
\xi_h,
\mathscr C_h(\chi,G_h^{(m)})
\right)$ in the preceding point identity and using the tested error
equation proves the identity.
\end{proof}

\begin{lemma}[Commutator estimate]
\label{lem:HHJ-paired-potential-commutator}
Assume that $\mathcal K|_U\in W^{1,\infty}(U)$ with a bound uniform in
the localization scale.
Let
\[
\widetilde q_h,\widetilde\tau_h\in Z_h,
\qquad
q_h:=\widetilde q_h|_U,
\qquad
\tau_h:=\widetilde\tau_h|_U.
\]
The commutator satisfies
\begin{align*}
&
\left|
a_U\bigl(q_h,\mathscr C_h(\chi,\tau_h)\bigr)
-
a_U\bigl(\tau_h,\mathscr C_h(\chi,q_h)\bigr)
\right|
\\
&\qquad\lesssim
\left[
d^{-1}\|q_h\|_{L^2(\omega_\chi^{++})}
+
d^{-2}
\inf_{r\in\mathcal R(\omega_\chi^{++})}
\|R_h\widetilde q_h-r\|_{L^2(\omega_\chi^{++})}
\right]
\|\widetilde\tau_h\|_{H^{-1}(\Omega;\mathbb S)}.
\end{align*}
\end{lemma}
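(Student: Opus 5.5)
The plan is to write each localized stress as the cutoff product plus the defect from Lemma~\ref{lem:HHJ-symmetric-potential-commutator}. The cutoff products then cancel by symmetry of $\mathcal K$, and only the two defect pairings remain to be estimated.

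Set $\phi^q_h:=R_h\widetilde q_h|_U$ and $\phi^\tau_h:=R_h\widetilde\tau_h|_U$. Let $r^q_\chi,r^\tau_\chi\in\mathcal R(\omega_\chi^{++})$ be the $L^2(\omega_\chi^{++})$-best approximations. Write
\[
\mathscr C_h(\chi,\tau_h)=\chi\tau_h+\mathcal D_\tau,
\qquad
\mathscr C_h(\chi,q_h)=\chi q_h+\mathcal D_q,
\]
where Lemma~\ref{lem:HHJ-symmetric-potential-commutator} gives $\supp\mathcal D_\tau,\supp\mathcal D_q\subset\omega_\chi^{++}$. Since $\mathcal K$ is symmetric, $a_U(q_h,\chi\tau_h)=(\mathcal K q_h,\chi\tau_h)_U=(\mathcal K\tau_h,\chi q_h)_U=a_U(\tau_h,\chi q_h)$. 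Hence the commutator reduces to
\[
a_U(q_h,\mathcal D_\tau)-a_U(\tau_h,\mathcal D_q).
\]
For the first term I will use Cauchy--Schwarz on $\omega_\chi^{++}$ together with the stronger negative-norm defect bound of Lemma~\ref{lem:HHJ-symmetric-potential-commutator}. This gives
\[
|a_U(q_h,\mathcal D_\tau)|\lesssim d^{-1}\|q_h\|_{L^2(\omega_\chi^{++})}\|\widetilde\tau_h\|_{H^{-1}(\Omega;\mathbb S)}.
\]

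For the second term I will split $\mathcal D_q=R_{\chi,h}(q_h)+\operatorname{sym}\bigl((\phi^q_h-r^q_\chi)\otimes\nabla^\perp\chi\bigr)$.

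The $R_{\chi,h}$ part is piecewise polynomial and not in $H^1$, so I will not use duality for it. Instead I will use the bubble inverse estimate from the proof of Lemma~\ref{lem:HHJ-symmetric-potential-commutator}, namely $\|\tau_h\|_{L^2(\omega_\chi^{++})}\lesssim h^{-1}\|\widetilde\tau_h\|_{H^{-1}(\Omega;\mathbb S)}$. Combined with $\|R_{\chi,h}(q_h)\|_{L^2(U)}\lesssim (h/d)\|q_h\|_{L^2(\omega_\chi^{++})}$, this gives a bound $d^{-1}\|q_h\|_{L^2(\omega_\chi^{++})}\|\widetilde\tau_h\|_{H^{-1}}$. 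The factor $h$ cancels exactly.

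For the smooth part, set $w:=\mathcal K\operatorname{sym}\bigl((\phi^q_h-r^q_\chi)\otimes\nabla^\perp\chi\bigr)$. Since $\nabla\chi$ is compactly supported in $\omega_\chi\Subset\omega_\chi^{++}$ and $\phi^q_h\in H^1$, the zero extension of $w$ lies in $H_0^1(\Omega;\mathbb S)$. Then $a_U(\tau_h,\cdot)=(\widetilde\tau_h,w)_\Omega$, and the definition of the $H^{-1}$ norm gives
\[
|(\widetilde\tau_h,w)_\Omega|\le\|\widetilde\tau_h\|_{H^{-1}(\Omega;\mathbb S)}\|\nabla w\|_{L^2(\Omega)}.
\]
The product rule, $\|D^j\chi\|_\infty\lesssim d^{-j}$, and $\mathcal K\in W^{1,\infty}$ (uniformly, with $d\lesssim1$) give
\[
\|\nabla w\|_{L^2}\lesssim d^{-1}\|\nabla(\phi^q_h-r^q_\chi)\|_{L^2(\omega_\chi^{++})}+d^{-2}\|\phi^q_h-r^q_\chi\|_{L^2(\omega_\chi^{++})}.
\]
A scaled Korn inequality on the uniformly Lipschitz patch $\omega_\chi^{++}$ of diameter $\simeq d$ bounds the gradient term by $\|q_h\|_{L^2(\omega_\chi^{++})}+d^{-1}\|\phi^q_h-r^q_\chi\|_{L^2(\omega_\chi^{++})}$, using $\symcurl(\phi^q_h-r^q_\chi)=q_h$. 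Since $r^q_\chi$ is the $L^2$-best approximation, $\|\phi^q_h-r^q_\chi\|_{L^2(\omega_\chi^{++})}$ equals the infimum in the statement. Summing the three contributions then yields the claim.

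The main obstacle is this last duality step, and it has two delicate points. First, one must check that $w$ really vanishes on $\partial\omega_\chi^{++}$ and belongs to $H_0^1$, which relies on the separation \eqref{eq:nested-cutoff-separation}. Second, the Korn step must be applied in its non-homogeneous form, so that only $\|q_h\|$ and the $L^2$ distance to $\mathcal R$ appear, with the correct powers of $d$.
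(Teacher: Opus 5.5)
Your proposal is correct and follows essentially the same route as the paper. The cutoff products cancel by symmetry of $\mathcal K$, and the $R_{\chi,h}(q_h)$ part is handled by the bubble inverse estimate $h\|\tau_h\|_{L^2(\omega_\chi^{++})}\lesssim\|\widetilde\tau_h\|_{H^{-1}(\Omega;\mathbb S)}$, so the factor $h$ cancels. The smooth part $\operatorname{sym}\bigl((\phi^q_h-r^q_\chi)\otimes\nabla^\perp\chi\bigr)$ is paired with $\widetilde\tau_h$ by $H^{-1}$--$H_0^1$ duality after a scaled Korn bound.

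The only difference is cosmetic. For $a_U(q_h,\mathcal D_\tau)$ you invoke the already-established negative-norm defect bound of Lemma~\ref{lem:HHJ-symmetric-potential-commutator} in one step. The paper instead re-splits that defect into $R_{\chi,h}(\tau_h)$ and $K_\chi(\tau_h)$ and bounds each piece separately, using the same inverse estimate and Lemma~\ref{lem:potential-negative-norm}.
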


\begin{proof}
Set $\phi_h:=\left.R_h\widetilde\tau_h\right|_U$ and
$\psi_h:=\left.R_h\widetilde q_h\right|_U$.  Choose
$r_\tau,r_q\in\mathcal R(\omega_\chi^{++})$ as their respective best
quotient approximations.  By
Lemma~\ref{lem:HHJ-symmetric-potential-commutator},
\begin{align*}
\mathscr C_h(\chi,\tau_h)-\chi\tau_h
&=
R_{\chi,h}(\tau_h)+K_\chi(\tau_h),
\\
\mathscr C_h(\chi,q_h)-\chi q_h
&=
R_{\chi,h}(q_h)+K_\chi(q_h),
\end{align*}
where the two continuous commutator terms are
\begin{align*}
K_\chi(\tau_h)
&=
\operatorname{sym}
\bigl((\phi_h-r_\tau)\otimes\nabla^\perp\chi\bigr),
\\
K_\chi(q_h)
&=
\operatorname{sym}
\bigl((\psi_h-r_q)\otimes\nabla^\perp\chi\bigr).
\end{align*}

Since $a_U$ is symmetric,
$a_U(q_h,\chi\tau_h)=a_U(\tau_h,\chi q_h)$.  Hence,
\begin{align*}
&
a_U\bigl(q_h,\mathscr C_h(\chi,\tau_h)\bigr)
-
a_U\bigl(\tau_h,\mathscr C_h(\chi,q_h)\bigr)
\\
&\quad=
a_U\bigl(q_h,R_{\chi,h}(\tau_h)\bigr)
-
a_U\bigl(\tau_h,R_{\chi,h}(q_h)\bigr)
\\
&\qquad
+
a_U\bigl(q_h,K_\chi(\tau_h)\bigr)
-
a_U\bigl(\tau_h,K_\chi(q_h)\bigr).
\end{align*}

The superapproximation estimates imply
\begin{align*}
\|R_{\chi,h}(\tau_h)\|_{L^2(U)}
&\lesssim
\frac hd
\|\tau_h\|_{L^2(\omega_\chi^{++})},
\\
\|R_{\chi,h}(q_h)\|_{L^2(U)}
&\lesssim
\frac hd
\|q_h\|_{L^2(\omega_\chi^{++})}.
\end{align*}
The inverse negative-norm estimate on the transition patch yields
\[
h\|\tau_h\|_{L^2(\omega_\chi^{++})}
\lesssim
\|\widetilde\tau_h\|_{H^{-1}(\Omega;\mathbb S)}.
\]
Thus the two remainder terms satisfy
\begin{align*}
&
\left|
a_U\bigl(q_h,R_{\chi,h}(\tau_h)\bigr)
\right|
+
\left|
a_U\bigl(\tau_h,R_{\chi,h}(q_h)\bigr)
\right|
\lesssim
d^{-1}
\|q_h\|_{L^2(\omega_\chi^{++})}
\|\widetilde\tau_h\|_{H^{-1}(\Omega;\mathbb S)}.
\end{align*}

Lemma~\ref{lem:potential-negative-norm} implies
\[
 \|\phi_h-r_\tau\|_{L^2(\omega_\chi^{++})}
 \lesssim
 \|\widetilde\tau_h\|_{H^{-1}(\Omega;\mathbb S)}.
\]
Consequently, the first commutator term satisfies
\begin{align*}
\left|
a_U\bigl(q_h,K_\chi(\tau_h)\bigr)
\right|
\lesssim
d^{-1}
\|q_h\|_{L^2(\omega_\chi^{++})}
\|\widetilde\tau_h\|_{H^{-1}(\Omega;\mathbb S)}.
\end{align*}
The scaled Korn estimate modulo the affine kernel and the cutoff bounds imply
\begin{align*}
\|K_\chi(q_h)\|_{H^1(\omega_\chi^{++})}
\lesssim
\bigg[
d^{-1}\|q_h\|_{L^2(\omega_\chi^{++})}
+
d^{-2}
\|\psi_h-r_q\|_{L^2(\omega_\chi^{++})}
\bigg].
\end{align*}
Since $K_\chi(q_h)$ is supported strictly inside
$\omega_\chi^{++}$, the local $W^{1,\infty}$ regularity of $\mathcal K$
implies $\mathcal K K_\chi(q_h)
\in H_0^1(\omega_\chi^{++};\mathbb S)$ and
\begin{align*}
\|\mathcal K K_\chi(q_h)\|_{H^1(\omega_\chi^{++})}
\lesssim
\bigg[
d^{-1}\|q_h\|_{L^2(\omega_\chi^{++})}
+
d^{-2}
\|\psi_h-r_q\|_{L^2(\omega_\chi^{++})}
\bigg].
\end{align*}
Extending $\mathcal K K_\chi(q_h)$ by zero to $\Omega$ and using
$H^{-1}$--$H_0^1$ duality gives
\begin{align*}
\left|
a_U\bigl(\tau_h,K_\chi(q_h)\bigr)
\right|
\lesssim
\bigg[
d^{-1}\|q_h\|_{L^2(\omega_\chi^{++})}
+
d^{-2}
\|\psi_h-r_q\|_{L^2(\omega_\chi^{++})}
\bigg]
\|\widetilde\tau_h\|_{H^{-1}(\Omega;\mathbb S)}.
\end{align*}

Combining these bounds in the commutator decomposition
and minimizing over $r_q\in\mathcal R(\omega_\chi^{++})$ proves the claim.
\end{proof}

Apply Lemma~\ref{lem:HHJ-paired-potential-commutator} to
$\widetilde q_h=G_h^{(m)}$ and $\widetilde\tau_h=\xi_h$,
which belong to $Z_h$.  By \eqref{eq:HHJ-potential-Galerkin-Green-stress},
\[
\symcurl
\left(
\left.R_hG_h^{(m)}\right|_U
\right)
=
G_h^{(m)}|_U
=
\symcurl\bigl(\Phi_h^{(m)}|_U\bigr).
\]
Hence
$\left.R_hG_h^{(m)}\right|_U
-\Phi_h^{(m)}|_U\in\mathcal R(U)$.

Restricting to $\omega_\chi^{++}$ gives
\begin{align*}
&
\inf_{r\in\mathcal R(\omega_\chi^{++})}
\left\|
\left.R_hG_h^{(m)}\right|_U-r
\right\|_{L^2(\omega_\chi^{++})}
=
\inf_{r\in\mathcal R(\omega_\chi^{++})}
\left\|
\Phi_h^{(m)}-r
\right\|_{L^2(\omega_\chi^{++})}.
\end{align*}
Combining Lemma~\ref{lem:HHJ-paired-potential-commutator} with
Lemma~\ref{lem:HHJ-localized-discrete-Green-identity} yields
\begin{align*}
&
\left|
L_m(\xi_h)
+
a_U\bigl(
\rho_\sigma,
\mathscr C_h(\chi,G_h^{(m)})
\bigr)
\right|
\notag\\
&\qquad\lesssim
\left[
d^{-1}
\|G_h^{(m)}\|_{L^2(\omega_\chi^{++})}
+
d^{-2}
\inf_{r\in\mathcal R(\omega_\chi^{++})}
\|\Phi_h^{(m)}-r\|_
 {L^2(\omega_\chi^{++})}
\right]
\|\xi_h\|_{H^{-1}(\Omega;\mathbb S)}.
\end{align*}

Equations~\eqref{eq:HHJ-value-transition-discrete-Green}--
\eqref{eq:HHJ-derivative-transition-discrete-Green} yield
\begin{align}
\left|
L_0(\xi_h)
+
a_U\bigl(
\rho_\sigma,
\mathscr C_h(\chi,G_h^{(0)})
\bigr)
\right|
&\lesssim
d^{-2}
\|\xi_h\|_{H^{-1}(\Omega;\mathbb S)},
\label{eq:HHJ-value-localized-Green-bound}
\\
\left|
L_1(\xi_h)
+
a_U\bigl(
\rho_\sigma,
\mathscr C_h(\chi,G_h^{(1)})
\bigr)
\right|
&\lesssim
d^{-3}
\|\xi_h\|_{H^{-1}(\Omega;\mathbb S)},
\qquad p\ge1.
\label{eq:HHJ-derivative-localized-Green-bound}
\end{align}

\subsection{Dyadic estimates and the localized maximum-norm theorem}

Let
\begin{equation*}
\begin{aligned}
 d_j&:=2^j\kappa_0 h,
 \qquad
 A_0:=U\cap B_{d_0}(x_0),
 \\
 A_j&:=
 U\cap\bigl(
 B_{d_j}(x_0)\setminus B_{d_j/2}(x_0)
 \bigr),
 \qquad j\ge1.
\end{aligned}
\end{equation*}
Choose $N$ so that $d_N$ is comparable to the diameter of the
local Green domain.  Let $A_j^+$ be a fixed enlargement of $A_j$.

\begin{lemma}[Local interpolation estimate]
Assume the potential $H^2$ shift \eqref{eq:HHJ-potential-H2-shift}
and, for $p\ge1$, the $H^3$ shift \eqref{eq:HHJ-potential-H3-shift}.
For the value source,
\begin{equation}
\left|
a_U\bigl(
\rho_\sigma,
\mathscr C_h(\chi,G_h^{(0)})
\bigr)
\right|
\lesssim
\begin{cases}
\bigl(1+\log(d/h)\bigr)
\|\rho_\sigma\|_{L^\infty(B_{2d}(x_0);\Th)},
& p=0,
\\[1mm]
\|\rho_\sigma\|_{L^\infty(B_{2d}(x_0);\Th)},
& p\ge1
\end{cases}.
\label{eq:HHJ-value-interpolation-Green-pairing}
\end{equation}
For the derivative source and $p\ge1$,
\begin{equation}
\left|
a_U\bigl(
\rho_\sigma,
\mathscr C_h(\chi,G_h^{(1)})
\bigr)
\right|
\lesssim
h^{-1}
\|\rho_\sigma\|_{L^\infty(B_{2d}(x_0);\Th)}.
\label{eq:HHJ-derivative-interpolation-Green-pairing}
\end{equation}
\end{lemma}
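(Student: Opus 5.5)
The plan is to bound the pairing by an $L^1$ estimate of the localized Green stress. Set $Q_h^{(m)}:=\mathscr C_h(\chi,G_h^{(m)})$. Since $Q_h^{(m)}$ is supported in $(\supp\chi)^+\Subset B_{7d/4}(x_0)\subset B_{2d}(x_0)$ and $\mathcal K$ is bounded, we have
\[
 |a_U(\rho_\sigma,Q_h^{(m)})|
 \lesssim
 \|\rho_\sigma\|_{L^\infty(B_{2d}(x_0);\Th)}
 \|Q_h^{(m)}\|_{L^1(U)}.
\]
Everything therefore reduces to showing $\|Q_h^{(0)}\|_{L^1(U)}\lesssim 1+\log(d/h)$ for $p=0$, $\|Q_h^{(0)}\|_{L^1(U)}\lesssim 1$ for $p\ge1$, and $\|Q_h^{(1)}\|_{L^1(U)}\lesssim h^{-1}$.

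To estimate the $L^1$ norm, write $Q_h^{(m)}=\chi G_h^{(m)}+(Q_h^{(m)}-\chi G_h^{(m)})$. The defect is supported in $\omega_\chi^{++}$, which has area $\simeq d^2$. By Lemma~\ref{lem:HHJ-symmetric-potential-commutator} and \eqref{eq:HHJ-value-transition-discrete-Green}--\eqref{eq:HHJ-derivative-transition-discrete-Green}, its $L^2$ norm is $\lesssim\|G_h^{(m)}\|_{L^2(\omega_\chi^{++})}\lesssim d^{-1-m}$. Cauchy--Schwarz then bounds its $L^1$ norm by $d\cdot d^{-1-m}=d^{-m}$, which is harmless since $d^{-1}\lesssim h^{-1}$.

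For $\chi G_h^{(m)}$ we use the dyadic decomposition $\{A_j\}_{j=0}^N$, noting that $\supp\chi\subset B_{3d/2}(x_0)$, so only $j$ with $d_j\lesssim d$ contribute and $N\simeq\log(d/h)$. On each $A_j$ apply Cauchy--Schwarz with $|A_j|^{1/2}\simeq d_j$, together with the bounds of Lemma~\ref{lem:HHJ-local-discrete-Green-estimates} (and the source-patch version for $j=0$):
\[
 \|G_h^{(m)}\|_{L^1(A_j)}\lesssim d_j\|G_h^{(m)}\|_{L^2(A_j)}\lesssim d_j^{-m}.
\]
For $m=1$, summing the geometric series gives $\sum_j d_j^{-1}\lesssim h^{-1}$, which proves \eqref{eq:HHJ-derivative-interpolation-Green-pairing}. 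For $m=0$ and $p=0$, each annulus contributes $O(1)$, so the sum is $\lesssim 1+\log(d/h)$.

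The main obstacle is removing the logarithm for $p\ge1$, where the crude bound $\|G_h^{(0)}\|_{L^2(A_j)}\lesssim d_j^{-1}$ no longer suffices. Here I would split $G_h^{(0)}=G^{(0)}-\symcurl\varepsilon_h^{(0)}$ on each annulus with $j\ge1$.

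First, the continuous part is integrable: Lemma~\ref{lem:HHJ-interior-Green-stresses} gives the pointwise decay $|G^{(0)}(x)|\lesssim|x-x_0|^{-2}$. Since $\int|x-x_0|^{-2}$ over $\{d_0\le|x-x_0|\le 2d\}$ is still logarithmic, this decay alone does not help. I would therefore sharpen the bound using the zero-mean structure of the leading term. The leading part of $G^{(0)}$ is $\mathcal C\,\partial^2\partial^2\mathscr G_U E$, a homogeneous degree $-2$ kernel whose angular average against constants cancels, while $\rho_\sigma$ is essentially constant on $B_{2d}$ only up to $h|\sigma|_{W^{1,\infty}}$. If this cancellation is not available, I would instead estimate $\|G^{(0)}\|_{L^1(A_j)}$ via the second-derivative kernel bound with $|\mu|+|\nu|=4$ against the moment condition \eqref{eq:regularized-value-moments}. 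That gives $|G^{(0)}|\lesssim h|x-x_0|^{-3}$ away from a mean term $\mathcal C\,\partial^2\partial^2\mathscr G_U(x,x_0)E$, whose $L^1$ norm on annuli must be handled separately.

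Second, the discrete error part is summable: \eqref{eq:HHJ-value-annular-Galerkin-error} gives $\|\symcurl\varepsilon_h^{(0)}\|_{L^1(A_j)}\lesssim d_j\,h d_j^{-2}=h/d_j$, and summing over $j$ yields a bounded total.

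I expect the continuous term to be the genuine difficulty. My first attempt would be to bound $\|G^{(0)}\|_{L^1}$ directly, using that the Green stress of the HHJ value functional behaves like a Calderón--Zygmund kernel. If the logarithm persists, I would exploit that for $p\ge1$ one can pair $\rho_\sigma$ elementwise with the mean-free part of $G^{(0)}$, using the orthogonality $(\rho_\sigma,\cdot)_T=0$ against lower-degree polynomials inherent in $\Pi_h$. This converts the $|x-x_0|^{-2}$ kernel into $h|x-x_0|^{-3}$, which is summable.
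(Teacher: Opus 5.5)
You handle the $p=0$ value bound and the derivative bound correctly, essentially as the paper does: annular $L^2$ bounds from Lemma~\ref{lem:HHJ-local-discrete-Green-estimates}, H\"older on each $A_j$, and dyadic summation. Splitting off the defect $Q_h^{(m)}-\chi G_h^{(m)}$ once, with $L^1$ norm $\lesssim d^{-m}$, is a clean way to organize it.

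The gap is the log-free value bound for $p\ge1$. Your stated reduction to $\|Q_h^{(0)}\|_{L^1(U)}\lesssim1$ is false. On $B_{d/2}(x_0)$ one has $Q_h^{(0)}=G_h^{(0)}=G^{(0)}-\symcurl\varepsilon_h^{(0)}$, and the error part has bounded $L^1$ norm. To leading order, $G^{(0)}$ is a homogeneous kernel of degree $-2$ with zero angular mean but nonvanishing modulus. For example, with $\mathcal C$ the identity and $E$ a multiple of the identity matrix, $G^{(0)}_{11}$ is a nonzero multiple of $\partial_{11}\Delta(r^2\log r)=-4\cos2\theta/r^2$. Hence $\|Q_h^{(0)}\|_{L^1(U)}\gtrsim\log(d/h)-C$.

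Your first two attempts fail for the same reason:
\begin{itemize}
\item Angular cancellation of a Calder\'on--Zygmund kernel yields nothing against a general bounded function; pair the kernel with its own sign to see this.
\item Isolating the term $\mathcal C\,\partial^2\partial^2\mathscr G_U(x,x_0)E$ only relocates the logarithm.
\item $\rho_\sigma$ is not nearly constant on $B_{2d}(x_0)$. It is an interpolation error that varies on the mesh scale by as much as its own size, and the right-hand side of the lemma contains only $\|\rho_\sigma\|_{L^\infty}$, not $\sigma$.
\end{itemize}

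What closes the argument is your last fallback. It must replace the $L^1$ reduction, and it must be applied to a piece of $Q_h^{(0)}$ itself rather than to $G^{(0)}$ in isolation. Since $\chi_0=1$ on $\supp\chi$, you have $G_h^{(0)}=G^{(0)}-\symcurl\varepsilon_h^{(0)}$ there, so
\[
 Q_h^{(0)}=\chi G^{(0)}-\chi\symcurl\varepsilon_h^{(0)}+\bigl(Q_h^{(0)}-\chi G_h^{(0)}\bigr).
\]
The last two pieces have $L^1$ norm $\lesssim1$ by what you already have: the defect bound, \eqref{eq:HHJ-value-global-Galerkin-energy} on $A_0$, and \eqref{eq:HHJ-value-annular-Galerkin-error} summed as $\sum_jh/d_j$. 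So they can be paired with $\rho_\sigma$ in $L^\infty$--$L^1$.

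For the smooth piece, use $(\rho_\sigma,c)_T=0$ for constant $c\in\mathbb S$ (interior HHJ moments, $p\ge1$) and the constancy of $\mathcal K$. This gives $|a_T(\rho_\sigma,\chi G^{(0)})|\lesssim h\|\rho_\sigma\|_{L^\infty(T)}\|\nabla(\chi G^{(0)})\|_{L^1(T)}$ on elements away from the source. With $\|\nabla G^{(0)}\|_{L^2(A_j^+)}\lesssim d_j^{-2}$ and $|\nabla\chi|\lesssim d^{-1}$, this sums to $h\sum_jd_j^{-1}\lesssim1$. The $O(1)$ elements near $x_0$ are handled directly by $\|G^{(0)}\|_{L^1}\lesssim h\|G^{(0)}\|_{L^2(U)}\lesssim1$.

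This is the paper's mechanism, organized differently. The paper keeps the continuous part discrete, writing $Q_h^{(0)}=Q_h^c-Q_h^e$ with $Q_h^c=\symcurl J_h(\operatorname{Ext}_0[\chi(\Phi^{(0)}-r_c)])$. It then bounds $h\|\nabla_hQ_h^c\|_{L^1}$ through the Scott--Zhang $H^2$ stability \eqref{eq:HHJ-Scott-Zhang-H2-stability} and the higher-order Korn bound \eqref{eq:local-higher-order-symcurl-Korn}, with matched affine representatives $r_c,r_e$. Splitting before $J_h$ is applied avoids that machinery, since $\chi G^{(0)}$ is smooth. As written, however, your proposal never commits to this step, so the $p\ge1$ value estimate is not yet proved.
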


\begin{proof}
Set $Q_h^{(m)}:=\mathscr C_h(\chi,G_h^{(m)})$.  The support
property in Lemma~\ref{lem:HHJ-symmetric-potential-commutator} implies
\begin{equation*}
 \supp Q_h^{(m)}\subset(\supp\chi)^+\Subset B_{2d}(x_0).
\end{equation*}

For every annulus $A_j$ meeting
$(\supp\chi)^+$, its fixed enlargement $A_j^+$ may be chosen so that
\[
A_j\Subset A_j^+\Subset\{\chi_0=1\},
\qquad
\dist(A_j,\partial A_j^+)\ge c d_j.
\]
Lemma~\ref{lem:HHJ-local-discrete-Green-estimates} with $D_\delta=A_j$
and $\delta=d_j$, together with the stability of $\mathscr C_h$, gives
\begin{align*}
\|Q_h^{(0)}\|_{L^2(A_j)}
&\lesssim d_j^{-1},
\\
\|Q_h^{(1)}\|_{L^2(A_j)}
&\lesssim d_j^{-2},
\qquad p\ge1.
\end{align*}
The source-patch estimate in
Lemma~\ref{lem:HHJ-local-discrete-Green-estimates} gives the same bounds
on $A_0$; outside $(\supp\chi)^+$, $Q_h^{(m)}=0$.

Since $|A_j|^{1/2}\lesssim d_j$, H\"older's inequality implies
\begin{align*}
 \|Q_h^{(0)}\|_{L^1(A_j)}&\lesssim1,
 &
 \|Q_h^{(1)}\|_{L^1(A_j)}&\lesssim d_j^{-1}.
\end{align*}

Summing over $j$, with $N+1\simeq1+\log(d/h)$, gives
\begin{align*}
\|Q_h^{(0)}\|_{L^1(U)}
&\lesssim
\bigl(1+\log(d/h)\bigr),
\\
\|Q_h^{(1)}\|_{L^1(U)}
&\lesssim
\sum_{j=0}^{N}d_j^{-1}
\lesssim h^{-1},
\qquad p\ge1.
\end{align*}
The boundedness of $\mathcal K$ converts these $L^1$ bounds into the
logarithmic value estimate for $p=0$ and the derivative estimate for
$p\ge1$.  The value estimate for $p\ge1$ requires an additional
cancellation to remove the logarithm.  For that estimate, let $p\ge1$
and set
$\varepsilon_h^{(0)}
:=v^{(0)}-\Phi_h^{(0)}$.  The global
negative-norm estimate \eqref{eq:HHJ-value-global-Galerkin-negative} implies
$\|\varepsilon_h^{(0)}\|_{H^{-1}(\Omega;\mathbb R^2)}
\lesssim h$.

Set
$\psi_h:=\left.R_hG_h^{(0)}\right|_U$.
Since
$\symcurl\psi_h=G_h^{(0)}|_U
=\symcurl(\Phi_h^{(0)}|_U)$, there exists
$s_U\in\mathcal R(U)$ such that
$\psi_h=\Phi_h^{(0)}+s_U$.

On $\supp\chi$, the definition of $v^{(0)}$ reads
\[
 \psi_h
 =\Phi^{(0)}-r_0^{(0)}
  -\varepsilon_h^{(0)}+s_U.
\]
Let $r_q\in\mathcal R(\omega_\chi^{++})$ be the best representative
of $\psi_h$ used in
$\mathscr C_h(\chi,G_h^{(0)})$, and let
$r_c\in\mathcal R(\omega_\chi^{++})$ be an $L^2$-best representative
of $\Phi^{(0)}$.  Define
\[
 r_e:=r_c-r_0^{(0)}+s_U-r_q
 \in\mathcal R(\omega_\chi^{++}).
\]
Then, on $\supp\chi$,
\[
 (\Phi^{(0)}-r_c)
 -(\varepsilon_h^{(0)}-r_e)
 =\psi_h-r_q.
\]
By linearity of $J_h$,
\begin{equation*}
Q_h^{(0)}
=
Q_h^{c}-Q_h^{e},
\end{equation*}
where the continuous and Galerkin-error parts are
\begin{align*}
Q_h^{c}
&:=
 \symcurl J_h
\bigl(
 \operatorname{Ext}_0[\chi(\Phi^{(0)}-r_c)]
\bigr),
\\
Q_h^{e}
&:=
 \symcurl J_h
\bigl(
 \operatorname{Ext}_0[\chi(\varepsilon_h^{(0)}-r_e)]
\bigr).
\end{align*}
This decomposition
removes the logarithm once its two parts are summed separately.  For the
continuous part, on the source elements whose interpolation patches are
contained in $\{\chi=1\}$, affine reproduction removes the choice of
$r_c$, and local stability of $J_h$ together with the $L^2$ bound for
the Green stress on the source patch gives
$\|Q_h^c\|_{L^2}\lesssim h^{-1}$.  Hence the elementwise inverse estimate
and $|A_0|^{1/2}\lesssim h$ yield
\begin{equation*}
 h\|\nabla_hQ_h^c\|_{L^1(A_0)}
 \lesssim
 \|Q_h^c\|_{L^1(A_0^+)}
 \lesssim1.
\end{equation*}
If a portion of $A_0$ meets the cutoff transition, which can occur only
when $d\simeq h$, that portion is included in the transition estimate
below.

If $A_j^+\subset\{\chi=1\}$, locality and affine reproduction of
$J_h$ allow the affine representative to be chosen on $A_j^+$.  The local $H^2$ stability
\eqref{eq:HHJ-Scott-Zhang-H2-stability} and
$\symcurl\Phi^{(0)}=G^{(0)}$ then give
\begin{equation*}
 \|\nabla_hQ_h^c\|_{L^2(A_j)}
 \lesssim
 \|\nabla G^{(0)}\|_{L^2(A_j^+)}
 \lesssim d_j^{-2}.
\end{equation*}
Since $|A_j|^{1/2}\lesssim d_j$, this estimate yields
$h\|\nabla_hQ_h^c\|_{L^1(A_j)}\lesssim h/d_j$.

It remains to consider the annuli meeting $\supp(\nabla\chi)$.  There
are only a uniformly bounded number of them and $d_j\simeq d$ there.
Using the fixed representative $r_c$ from the definition of $Q_h^c$,
the local $H^2$ stability
\eqref{eq:HHJ-Scott-Zhang-H2-stability} and the cutoff product rule give
\begin{align*}
 \|\nabla_hQ_h^c\|_{L^2(A_j)}
 \lesssim{}&
 |\Phi^{(0)}-r_c|_{H^2(\omega_\chi^{++})}
 +d^{-1}\|G^{(0)}\|_{L^2(\omega_\chi^{++})}
 +d^{-2}\|\Phi^{(0)}-r_c\|_{L^2(\omega_\chi^{++})}.
\end{align*}
Here $\|\nabla\chi\|_{L^\infty}\lesssim d^{-1}$ and
$\|D^2\chi\|_{L^\infty}\lesssim d^{-2}$.  The $H^2$ term is controlled
by \eqref{eq:local-higher-order-symcurl-Korn} on a fixed enlargement of
$\omega_\chi^{++}$ contained in $\{\chi_0=1\}$, while the last term is
controlled by the scaled Korn estimate modulo the affine kernel that
defines $r_c$.
Together with the transition-region Green bounds, the right-hand side is
$\lesssim d^{-2}$.  Thus each transition annulus contributes at most
$Ch/d$.  Since
$d_j=2^j\kappa_0h$, we conclude that
\begin{equation*}
\begin{aligned}
 \sum_{j=0}^{N}h\|\nabla_h Q_h^{c}\|_{L^1(A_j)}
 &\lesssim
 1+\sum_{\substack{j\ge1:\ A_j^+\subset\{\chi=1\}}}
       \frac{h}{d_j}
 +\frac{h}{d}
 \lesssim1.
\end{aligned}
\end{equation*}

On a nonsource annulus with $A_j^+\subset\{\chi=1\}$, locality
and affine reproduction of $J_h$, with $\symcurl r=0$, allow
$r_e$ to be replaced by an $L^2(A_j^+)$-best representative in
$\mathcal R(A_j^+)$.  By \eqref{eq:HHJ-value-annular-Galerkin-error},
\[
\|\symcurl\varepsilon_h^{(0)}\|_{L^2(A_j^+)}
\lesssim h d_j^{-2}.
\]
The local stability of $J_h$ and the scaled Korn inequality modulo the
affine kernel
then yield
\begin{align*}
\|Q_h^{e}\|_{L^2(A_j)}
&\lesssim
\bigg[
\|\symcurl\varepsilon_h^{(0)}\|_
 {L^2(A_j^+)}
+
d_j^{-1}
\inf_{r\in\mathcal R(A_j^+)}
\|\varepsilon_h^{(0)}-r\|_
 {L^2(A_j^+)}
\bigg]
\\
&\lesssim
\|\symcurl\varepsilon_h^{(0)}\|_
 {L^2(A_j^+)}
\lesssim
hd_j^{-2}.
\end{align*}
Hence $\|Q_h^{e}\|_{L^1(A_j)}
\le
|A_j|^{1/2}
\|Q_h^{e}\|_{L^2(A_j)}
\lesssim
\frac{h}{d_j}$.
On $A_0$, \eqref{eq:HHJ-value-global-Galerkin-energy} and the stability
of $\mathscr C_h$ give $\|Q_h^{e}\|_{L^1(A_0)}\lesssim1$.

On the cutoff transition region, the definition of $r_e$ gives
 \[
 \begin{aligned}
  \|\varepsilon_h^{(0)}-r_e\|_
   {L^2(\omega_\chi^{++})}
  &\le
  \|\Phi^{(0)}-r_c\|_
   {L^2(\omega_\chi^{++})}
  +
  \|\psi_h-r_q\|_{L^2(\omega_\chi^{++})}.
 \end{aligned}
 \]
 The scaled Korn inequality modulo the affine kernel, the continuous Green estimate,
 and the discrete Green transition estimate imply
 \[
 \begin{aligned}
  d^{-1}\|\Phi^{(0)}-r_c\|_
   {L^2(\omega_\chi^{++})}
  &\lesssim \|G^{(0)}\|_
   {L^2(\omega_\chi^{++})}
  \lesssim d^{-1},\\
  d^{-1}\|\psi_h-r_q\|_{L^2(\omega_\chi^{++})}
  &\lesssim \|G_h^{(0)}\|_
   {L^2(\omega_\chi^{++})}
  \lesssim d^{-1}.
 \end{aligned}
 \]
Thus
$\|\varepsilon_h^{(0)}-r_e\|_{L^2(\omega_\chi^{++})}
\lesssim1$.

Local $H^1$ stability, the cutoff product estimate, and
\eqref{eq:HHJ-value-annular-Galerkin-error} at scale $d$ yield
 \[
 \begin{aligned}
  \|Q_h^e\|_{L^2(\omega_\chi^+)}
  &\lesssim \left[
   \|\symcurl\varepsilon_h^{(0)}\|_
    {L^2(\omega_\chi^{++})}
   +d^{-1}
    \|\varepsilon_h^{(0)}-r_e\|_
     {L^2(\omega_\chi^{++})}
  \right]\\
  &\lesssim hd^{-2}+d^{-1}
  \lesssim d^{-1}.
 \end{aligned}
 \]
Since $|\omega_\chi^+|\lesssim d^2$, H\"older's inequality yields
$\|Q_h^e\|_{L^1(\omega_\chi^+)}\lesssim1$.

Only finitely many annuli meet the transition region, all with
$d_j\simeq d$; those outside $\supp Q_h^e$ contribute zero.  Hence
\begin{equation*}
\sum_{j=0}^{N}
 \|Q_h^{e}\|_{L^1(A_j)}
 \lesssim
 1
 + \sum_{\substack{j:\ A_j^+\subset\{\chi=1\}\\j\ge1}}
       \frac{h}{d_j}
 + 1
\lesssim 1.
\end{equation*}

For $p\ge1$, the HHJ interior moment conditions give, on each $T$,
$(\rho_\sigma,q)_T=0$ for all $q\in\mathcal P_0(T;\mathbb S)$.
Because $\mathcal K$ is constant, subtracting the element mean yields
\begin{align*}
\left|
a_T(\rho_\sigma,Q_h^{c})
\right|=
\left|
\bigl(
\rho_\sigma,
\mathcal KQ_h^{c}
-
(\mathcal KQ_h^{c})_T
\bigr)_T
\right|
\lesssim
h
\|\rho_\sigma\|_{L^\infty(T)}
\|\nabla_hQ_h^{c}\|_{L^1(T)}.
\end{align*}
Summing and using the preceding $L^1$ moment bound gives
\[
\left|
a_U(\rho_\sigma,Q_h^{c})
\right|
\lesssim
\|\rho_\sigma\|_{L^\infty(B_{2d}(x_0);\Th)}.
\]
For the Galerkin-error part, the corresponding $L^1$ bound for
$Q_h^e$ implies
\[
\left|
a_U(\rho_\sigma,Q_h^{e})
\right|
\lesssim
\|\rho_\sigma\|_{L^\infty(B_{2d}(x_0);\Th)}.
\]
Combining the two parts of the decomposition proves the log-free value
estimate.
\end{proof}

\begin{theorem}[Localized maximum-norm estimates for HHJ bending moments]
\label{thm:HHJ-local-maximum-norm}
Let $\Sigma_h\times V_h$ be the degree-$p$
HHJ pair on a shape-regular, quasi-uniform triangulation, and assume
that $\mathcal K$ is constant. Assume that the global Dirichlet potential problem
\eqref{eq:HHJ-potential-Dirichlet-dual} satisfies the $H^2$ shift
\eqref{eq:HHJ-potential-H2-shift}; for $p\ge1$, assume in addition
the $H^3$ shift
\eqref{eq:HHJ-potential-H3-shift}.

Let $\mathcal O_{4d}\Subset\Omega$ and
$d\ge\kappa_0 h$, where $\kappa_0$ is sufficiently large but
independent of $h$ and $d$.  For $p=0$,
\begin{equation}
\|\sigma-\sigma_h\|_{L^\infty(\mathcal O;\mathcal T_h)}
\lesssim
\bigl(1+\log(d/h)\bigr)
\|\rho_\sigma\|_{L^\infty(\mathcal O_{3d};\mathcal T_h)}
+
d^{-1}\|\xi_h\|_{L^2(\Omega)}.
\label{eq:HHJ-p0-local-maximum-norm}
\end{equation}

For $p\ge1$,
\begin{equation}
\|\sigma-\sigma_h\|_{L^\infty(\mathcal O;\mathcal T_h)}
\lesssim
\|\rho_\sigma\|_{L^\infty(\mathcal O_{3d};\mathcal T_h)}
+
d^{-2}
\|\xi_h\|_{H^{-1}(\Omega;\mathbb S)}.
\label{eq:HHJ-p-ge-1-local-maximum-norm}
\end{equation}

Moreover, for $p\ge1$,
\begin{align}
&
\|\nabla\sigma-\nabla_h\sigma_h\|_
 {L^\infty(\mathcal O;\mathcal T_h)}
\notag\\
&\qquad\lesssim
\left[
\|\nabla_h\rho_\sigma\|_
 {L^\infty(\mathcal O_{3d};\mathcal T_h)}
+
h^{-1}
\|\rho_\sigma\|_
 {L^\infty(\mathcal O_{3d};\mathcal T_h)}
\right]
+
d^{-3}
\|\xi_h\|_{H^{-1}(\Omega;\mathbb S)}.
\label{eq:HHJ-local-gradient-maximum-norm}
\end{align}
\end{theorem}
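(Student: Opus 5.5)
The proof reduces to a pointwise bound on $\xi_h$: everything is already in place in the localized Green bounds and the interpolation pairing lemma. Fix $x_0\in\mathcal O$ and an element $T\ni x_0$ with $T\cap\mathcal O\ne\emptyset$, and use the selected element $T_0$ as in the convention preceding Lemma~\ref{lem:regularized-point}. Split $\sigma-\sigma_h=\rho_\sigma+\xi_h$. The $\rho_\sigma$ part is bounded directly by $\|\rho_\sigma\|_{L^\infty(\mathcal O_{3d};\mathcal T_h)}$, and likewise $\nabla_h\rho_\sigma$, since $T\subset\mathcal O_{3d}$ once $\kappa_0$ is large.

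For $\xi_h$, I use that $\xi_h|_{T_0}$ is a polynomial. Taking the supremum over unit $E\in\mathbb S$ and unit $\nu$, Lemma~\ref{lem:regularized-point} gives $|E:\xi_h(x_0)|=|L_0(\xi_h)|$ and $|E:\partial_\nu\xi_h(x_0)|=|L_1(\xi_h)|$. Then \eqref{eq:HHJ-value-localized-Green-bound} and \eqref{eq:HHJ-derivative-localized-Green-bound} bound these by the pairing $|a_U(\rho_\sigma,\mathscr C_h(\chi,G_h^{(m)}))|$ plus $d^{-2}\|\xi_h\|_{H^{-1}}$, respectively $d^{-3}\|\xi_h\|_{H^{-1}}$. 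The pairings are controlled by \eqref{eq:HHJ-value-interpolation-Green-pairing} and \eqref{eq:HHJ-derivative-interpolation-Green-pairing} in terms of $\|\rho_\sigma\|_{L^\infty(B_{2d}(x_0);\Th)}\le\|\rho_\sigma\|_{L^\infty(\mathcal O_{3d};\Th)}$: with a factor $1+\log(d/h)$ for $p=0$, a factor $1$ for the value with $p\ge1$, and a factor $h^{-1}$ for the derivative. These give \eqref{eq:HHJ-p-ge-1-local-maximum-norm} and \eqref{eq:HHJ-local-gradient-maximum-norm} once we take the maximum over $x_0\in\mathcal O$. All constants are uniform in $x_0$, $T_0$, $E$ and $\nu$ by Lemmas~\ref{lem:regularized-point} and \ref{lem:HHJ-interior-Green-stresses}.

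The case $p=0$ is the main obstacle. There the global bound is stated as $d^{-1}\|\xi_h\|_{L^2}$ instead of $d^{-2}\|\xi_h\|_{H^{-1}}$, and the $H^3$ shift is not available. I would redo the commutator step rather than use \eqref{eq:HHJ-value-localized-Green-bound}. In Lemma~\ref{lem:HHJ-paired-potential-commutator}, replace the inverse estimate $h\|\tau_h\|_{L^2}\lesssim\|\widetilde\tau_h\|_{H^{-1}}$ by the direct $L^2$ bounds of Lemma~\ref{lem:HHJ-symmetric-potential-commutator}. That is, bound $\|R_{\chi,h}(\tau_h)\|\lesssim (h/d)\|\tau_h\|_{L^2}$ and $\|K_\chi(\tau_h)\|\lesssim d^{-1}\|\phi_h-r_\tau\|\lesssim\|\tau_h\|_{L^2(\omega_\chi^{++})}$ via scaled Korn, and symmetrically for $q_h$. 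This yields a commutator bound of the form $\|G_h^{(0)}\|_{L^2(\omega_\chi^{++})}\|\xi_h\|_{L^2}\lesssim d^{-1}\|\xi_h\|_{L^2}$ by \eqref{eq:HHJ-value-transition-discrete-Green}. This uses only the $H^2$-shift-based estimates \eqref{eq:HHJ-value-global-Galerkin-energy}--\eqref{eq:HHJ-value-global-Galerkin-L2} together with the $s=0$ case of Lemma~\ref{lem:HHJ-local-discrete-Green-estimates}.

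I expect the delicate point to be checking that the $K_\chi(q_h)$ term also needs no $H^1$ duality in this version. It should not, since plain Cauchy--Schwarz with $\|K_\chi(q_h)\|_{L^2}\lesssim d^{-1}\inf_r\|\Phi_h^{(0)}-r\|\lesssim d^{-1}$ suffices. Combined with the logarithmic pairing bound from \eqref{eq:HHJ-value-interpolation-Green-pairing}, this gives \eqref{eq:HHJ-p0-local-maximum-norm}.
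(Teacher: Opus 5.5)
Your proposal is correct and follows essentially the same route as the paper. For $p\ge1$ you combine \eqref{eq:HHJ-value-localized-Green-bound}--\eqref{eq:HHJ-derivative-localized-Green-bound} with the interpolation pairing bounds, and for $p=0$ you bound the two commutator terms of Lemma~\ref{lem:HHJ-localized-discrete-Green-identity} by Cauchy--Schwarz, using the direct $L^2$ defect bounds of Lemma~\ref{lem:HHJ-symmetric-potential-commutator} and \eqref{eq:HHJ-value-transition-discrete-Green}; this is exactly how the paper obtains the $d^{-1}\|\xi_h\|_{L^2(\Omega)}$ term, and your splitting into the $R_{\chi,h}$ and $K_\chi$ parts simply unpacks its combined bound $\|\mathscr C_h(\chi,\tau_h)-\chi\tau_h\|_{L^2(U)}\lesssim\|\tau_h\|_{L^2(\omega_\chi^{++})}$.
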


\begin{proof}
Fix $x_0\in\mathcal O$ and a unit symmetric tensor $E$.
Using the trace from the selected element $T_0$, value-source
reproduction gives
\begin{equation*}
E:(\sigma-\sigma_h)(x_0)
=
E:\rho_\sigma(x_0)
+
L_0(\xi_h).
\end{equation*}

For $p=0$, Lemma~\ref{lem:HHJ-localized-discrete-Green-identity} gives
\begin{align*}
L_0(\xi_h)
={}&
-a_U\bigl(
\rho_\sigma,
\mathscr C_h(\chi,G_h^{(0)})
\bigr)
+
a_U\bigl(
G_h^{(0)},
\mathscr C_h(\chi,\xi_h)
\bigr)
-
a_U\bigl(
\xi_h,
\mathscr C_h(\chi,G_h^{(0)})
\bigr).
\end{align*}
By symmetry of $a_U$, the two commutator terms pair with the
localization defects.  Their support and $L^2$ bounds in
Lemma~\ref{lem:HHJ-symmetric-potential-commutator}, together with
\eqref{eq:HHJ-value-transition-discrete-Green}, give
\begin{align*}
&
\left|
a_U\bigl(
G_h^{(0)},
\mathscr C_h(\chi,\xi_h)
\bigr)
-
a_U\bigl(
\xi_h,
\mathscr C_h(\chi,G_h^{(0)})
\bigr)
\right|
\lesssim
d^{-1}\|\xi_h\|_{L^2(\Omega)}.
\end{align*}
Combining the localized Green identity, the transition estimate, and
\eqref{eq:HHJ-value-interpolation-Green-pairing} yields
\begin{align*}
|L_0(\xi_h)|
\lesssim
\bigl(1+\log(d/h)\bigr)
\|\rho_\sigma\|_
 {L^\infty(\mathcal O_{3d};\mathcal T_h)}
+
d^{-1}\|\xi_h\|_{L^2(\Omega)}.
\end{align*}
Combining this bound with the preceding pointwise splitting gives
\eqref{eq:HHJ-p0-local-maximum-norm} pointwise.

For $p\ge1$,
\eqref{eq:HHJ-value-localized-Green-bound} and
\eqref{eq:HHJ-value-interpolation-Green-pairing} imply
\begin{equation*}
|L_0(\xi_h)|
\lesssim
\|\rho_\sigma\|_
 {L^\infty(\mathcal O_{3d};\mathcal T_h)}
+
d^{-2}
\|\xi_h\|_{H^{-1}(\Omega;\mathbb S)}.
\end{equation*}
Combining this bound with the preceding pointwise splitting gives
\eqref{eq:HHJ-p-ge-1-local-maximum-norm} pointwise.  Taking
the supremum over $x_0\in\mathcal O$ and the unit symmetric tensors $E$
proves both bounds.

For a unit direction $\nu$, derivative-source reproduction on the selected
element gives
\begin{equation*}
E:\partial_\nu(\sigma-\sigma_h)(x_0)
=
E:\partial_\nu\rho_\sigma(x_0)
+
L_1(\xi_h).
\end{equation*}
By
\eqref{eq:HHJ-derivative-localized-Green-bound} and
\eqref{eq:HHJ-derivative-interpolation-Green-pairing},
\begin{align*}
|L_1(\xi_h)|
\lesssim
h^{-1}
\|\rho_\sigma\|_
 {L^\infty(\mathcal O_{3d};\mathcal T_h)}
+
d^{-3}
\|\xi_h\|_{H^{-1}(\Omega;\mathbb S)}.
\end{align*}
Combining this bound with the preceding derivative splitting and taking
the supremum over $x_0$, $E$, and $\nu$ proves
\eqref{eq:HHJ-local-gradient-maximum-norm}.
\end{proof}

\begin{remark}[Regularity required by the auxiliary potential system]
The $H^3$ shift \eqref{eq:HHJ-potential-H3-shift} is an additional
analytic regularity assumption on the auxiliary potential system.
Convexity of a polygon alone does not imply it.  The conclusions for
$p\ge1$ that invoke this shift are conditional on that assumption.
\end{remark}

\begin{remark}[Extensions to curved and nonconvex domains]
The localization mechanism above is essentially interior.
If $\mathcal O_{4d}\Subset\Omega$, then the local Green problem is
posed on the ball $U=B_{5d/2}(x_0)\Subset\Omega$, and the
symmetric-curl localization operator and localized Green identity do not involve the
physical boundary.

For a smooth curved domain, the present argument suggests an extension
based on the curved-element HHJ formulation and lifting framework of
\cite{ArnoldWalker}.  After lifting the discrete fields to the exact
domain, the additional contributions are geometric terms that must be
estimated together with the discrete Green defect.

For a nonconvex polygon, the interior Green and localization arguments
remain applicable away from reentrant corners, while the global
potential shifts
\eqref{eq:HHJ-potential-H2-shift}--%
\eqref{eq:HHJ-potential-H3-shift}
and the adjoint regularity
\eqref{eq:HHJ-global-adjoint-regularity}
may deteriorate because of corner singularities
\cite{Grisvard1985,Dauge1988}.
Replacing these global shifts by the available fractional or weighted
regularity would lead to corresponding weaker pollution estimates and
convergence rates.  Targets approaching a curved boundary or a
reentrant corner would require boundary or corner Green estimates and
are not considered here.
\end{remark}

\section{A priori estimates and superconvergence}

\subsection{Global negative-norm estimate and a priori rates}
\begin{lemma}[Global negative-norm estimate for the discrete stress error]
\label{lem:HHJ-global-negative-norm}
Let $p\ge1$ and assume $\mathcal K\in W^{1,\infty}(\Omega)$.
For every
$F\in H_0^1(\Omega;\mathbb S)$, suppose that the global HHJ adjoint
problem admits a pair $(\Theta_F,y_F)
 \in
 \bigl(\Sigma\cap H^1(\Omega;\mathbb S)\bigr)
 \times H_0^2(\Omega)$ satisfying
\begin{align*}
 a(\tau,\Theta_F)+b(\tau,y_F)
 &=
 (F,\tau),
 &&\tau\in\Sigma,
 \\
 b(\Theta_F,v)
 &=
 0,
 &&v\in V.
\end{align*}
Assume the adjoint regularity estimate
\cite{AgmonDouglisNirenberg1959,AgmonDouglisNirenberg1964,Grisvard1985,Dauge1988}:
\begin{equation}
 \|\Theta_F\|_{H^1(\Omega)}
 \le
 C\|F\|_{H^1(\Omega)}.
 \label{eq:HHJ-global-adjoint-regularity}
\end{equation}
These adjoint equations control the discrete stress error in $Z_h$
$\xi_h=\Pi_h\sigma-\sigma_h$ through
\begin{equation*}
 \|\xi_h\|_{H^{-1}(\Omega;\mathbb S)}
 \lesssim
 h\left(
   \|\rho_\sigma\|_{L^2(\Omega)}
   +
   \|\xi_h\|_{L^2(\Omega)}
 \right).
\end{equation*}
Combining this preliminary estimate with kernel coercivity gives the
sharper bound
\begin{equation*}
 \|\xi_h\|_{H^{-1}(\Omega;\mathbb S)}
 \lesssim
 h\|\rho_\sigma\|_{L^2(\Omega)}.
\end{equation*}
\end{lemma}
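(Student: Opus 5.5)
We argue by duality: for $F\in H_0^1(\Omega;\mathbb S)$ we test the adjoint system with $\tau=\xi_h$ and bound $(F,\xi_h)$ by $h\|F\|_{H^1}$ times the energy quantities.

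\textbf{Representation.} Since $\xi_h\in\Sigma_h$ has single-valued normal--normal traces, it lies in $\Sigma$, and $b(\xi_h,v)=b_h(\xi_h,v)$ for $v\in H_0^2(\Omega)$. Taking $\tau=\xi_h$ in the first adjoint equation gives
\[
(F,\xi_h)=a(\xi_h,\Theta_F)+b_h(\xi_h,y_F).
\]
By the second Fortin identity in \eqref{eq:HHJ-Fortin} and $\xi_h\in Z_h$,
\[
b_h(\xi_h,y_F)=b_h(\xi_h,I_hy_F)=0.
\]
Hence $(F,\xi_h)=a(\xi_h,\Theta_F)$.

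\textbf{Inserting a discrete kernel function.} The second adjoint equation says $\DivDiv\Theta_F=0$, so $b_h(\Pi_h\Theta_F,v_h)=b_h(\Theta_F,v_h)=0$ by the first Fortin identity. Thus $\Pi_h\Theta_F\in Z_h$. Split
\[
a(\xi_h,\Theta_F)=a(\xi_h,\Theta_F-\Pi_h\Theta_F)+a(\xi_h,\Pi_h\Theta_F).
\]
For the second term, the kernel equation \eqref{eq:kernel-error} gives
\[
a(\xi_h,\Pi_h\Theta_F)=-a(\rho_\sigma,\Pi_h\Theta_F)
=-a(\rho_\sigma,\Theta_F)+a(\rho_\sigma,\Theta_F-\Pi_h\Theta_F).
\]

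\textbf{Bounding each term.}
\begin{itemize}
\item The interpolation estimate with $s=1$, $j=0$, $q=2$, together with \eqref{eq:HHJ-global-adjoint-regularity}, gives $\|\Theta_F-\Pi_h\Theta_F\|_{L^2}\lesssim h\|F\|_{H^1}$. So both terms containing $\Theta_F-\Pi_h\Theta_F$ are bounded by $h(\|\xi_h\|_{L^2}+\|\rho_\sigma\|_{L^2})\|F\|_{H^1}$.
\item The remaining term $a(\rho_\sigma,\Theta_F)=(\rho_\sigma,\mathcal K\Theta_F)$ is the main obstacle. We need an $O(h)$ factor without a second-order gain.
\item First attempt: use the $p\ge1$ moment condition $(\rho_\sigma,q)_T=0$ for $q\in\mathcal P_0(T;\mathbb S)$. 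Subtracting elementwise means gives
\[
|(\rho_\sigma,\mathcal K\Theta_F)|=\Bigl|\sum_T\bigl(\rho_\sigma,\mathcal K\Theta_F-(\mathcal K\Theta_F)_T\bigr)_T\Bigr|\lesssim h\|\rho_\sigma\|_{L^2}\,|\mathcal K\Theta_F|_{H^1_h}.
\]
By $\mathcal K\in W^{1,\infty}$ and the adjoint regularity, $|\mathcal K\Theta_F|_{H^1_h}\lesssim\|F\|_{H^1}$. This Poincaré step is exactly where $p\ge1$ and $\mathcal K\in W^{1,\infty}$ enter.
\end{itemize}
Dividing by $\|\nabla F\|_{L^2}$ (equivalent to $\|F\|_{H^1}$ on $H_0^1$ by Poincaré) and taking the supremum yields the preliminary bound.

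\textbf{Sharper bound.} Test \eqref{eq:kernel-error} with $\tau_h=\xi_h\in Z_h$. Uniform positivity of $\mathcal K$ gives
\[
\|\xi_h\|_{L^2}^2\lesssim a(\xi_h,\xi_h)=-a(\rho_\sigma,\xi_h)\lesssim\|\rho_\sigma\|_{L^2}\|\xi_h\|_{L^2},
\]
so $\|\xi_h\|_{L^2}\lesssim\|\rho_\sigma\|_{L^2}$. Substituting this into the preliminary estimate gives the final bound.
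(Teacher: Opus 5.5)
Your proposal is correct and follows the paper's proof essentially step for step. It uses the same duality representation $(F,\xi_h)=a(\xi_h,\Theta_F)$, the same insertion of $\Pi_h\Theta_F\in Z_h$, and the same $p\ge1$ element-mean cancellation. The only cosmetic difference is that you split $\Pi_h\Theta_F=\Theta_F-(\Theta_F-\Pi_h\Theta_F)$ before subtracting means, whereas the paper subtracts $(\mathcal K\Theta_F)_T$ from $\mathcal K\Pi_h\Theta_F$ directly.

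One small correction: normal--normal continuity alone does not place an HHJ field in $\Sigma$, since for general $\tau_h\in\Sigma_h$ the distribution $\DivDiv\tau_h$ contains vertex Dirac masses, which are not in $H^{-1}$ in two dimensions. What justifies testing the adjoint equation with $\tau=\xi_h$ is $\xi_h\in Z_h=\symcurl W_h(\Omega)$, which gives $\DivDiv\xi_h=0$.
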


\begin{proof}
Fix $F\in H_0^1(\Omega;\mathbb S)$ and let $(\Theta_F,y_F)$ be the
corresponding global HHJ adjoint pair.
Since $\xi_h\in Z_h$ and $y_F\in H_0^2(\Omega)$, the continuous and
broken pairings agree on $\xi_h$.  The second orthogonality in
\eqref{eq:HHJ-Fortin} implies
\[
 b(\xi_h,y_F)=b_h(\xi_h,y_F)
 =b_h(\xi_h,I_hy_F)=0.
\]
The first adjoint equation gives
\begin{equation*}
 (F,\xi_h)
 =
 a(\xi_h,\Theta_F).
\end{equation*}

By \eqref{eq:HHJ-global-adjoint-regularity}, the $H^1$ traces of
$\Theta_F$ define the normal--normal edge moments of $\Pi_h\Theta_F$; its
interior moments are defined in $L^2$.  For every $v_h\in V_h$, the
first commuting orthogonality in \eqref{eq:HHJ-Fortin} gives
$b_h(\Pi_h\Theta_F,v_h)=b_h(\Theta_F,v_h)$.  Since
$\Theta_F\in H^1(\Omega;\mathbb S)$, the broken and continuous pairings
agree on $V_h$, and hence
\[
 b_h(\Pi_h\Theta_F,v_h)
 =
 b(\Theta_F,v_h)
 =
 0.
\]
Thus $\Pi_h\Theta_F\in Z_h$.
Testing \eqref{eq:kernel-error} with $\tau_h=\Pi_h\Theta_F$ yields
\begin{align*}
 (F,\xi_h)
 &=
 a\bigl(
   \xi_h,\Theta_F-\Pi_h\Theta_F
 \bigr)
 +
 a\bigl(
   \xi_h,\Pi_h\Theta_F
 \bigr)
 \\
 &=
 a\bigl(
   \xi_h,\Theta_F-\Pi_h\Theta_F
 \bigr)
 -
 a\bigl(
   \rho_\sigma,\Pi_h\Theta_F
 \bigr).
\end{align*}

The HHJ interpolation estimate gives
\begin{equation*}
 \left|
 a\bigl(
   \xi_h,\Theta_F-\Pi_h\Theta_F
 \bigr)
 \right|
 \lesssim
 h
 \|\xi_h\|_{L^2(\Omega)}
 \|\Theta_F\|_{H^1(\Omega)}.
\end{equation*}

Since $p\ge1$, the HHJ interior moment conditions give
\begin{equation*}
 (\rho_\sigma,q)_T=0
 \qquad
 \forall q\in\mathcal P_0(T;\mathbb S),
 \quad T\in\mathcal T_h.
\end{equation*}
Let $c_T\in\mathbb S$ be the element mean of $\mathcal K \Theta_F$.  Then
\begin{align*}
 \left|
 (\rho_\sigma,\mathcal K\Pi_h\Theta_F)_T
 \right|
 &=
 \left|
 (\rho_\sigma,
   \mathcal K\Pi_h\Theta_F-c_T)_T
 \right|
 \le
 \|\rho_\sigma\|_{L^2(T)}
 \|\mathcal K\Pi_h\Theta_F-c_T\|_{L^2(T)}.
\end{align*}
The local approximation property, the elementwise Poincar\'e
inequality, and $\mathcal K\in W^{1,\infty}(\Omega)$ imply
\[
 \|\mathcal K\Pi_h\Theta_F-c_T\|_{L^2(T)}
 \lesssim h_T\|\Theta_F\|_{H^1(\omega_T)}.
\]

Summing over the mesh gives
\begin{equation*}
 \left|
 a\bigl(
   \rho_\sigma,\Pi_h\Theta_F
 \bigr)
 \right|
 \lesssim
 h
 \|\rho_\sigma\|_{L^2(\Omega)}
 \|\Theta_F\|_{H^1(\Omega)}.
\end{equation*}

Combining the splitting and the preceding two estimates with
\eqref{eq:HHJ-global-adjoint-regularity} yields
\[
 |(F,\xi_h)|
 \lesssim
 h\left(
   \|\rho_\sigma\|_{L^2(\Omega)}
   +
   \|\xi_h\|_{L^2(\Omega)}
 \right)
 \|F\|_{H^1(\Omega)}.
\]
Taking the supremum over $F\in H_0^1(\Omega;\mathbb S)$ gives the
preliminary estimate.

Coercivity of $a$ on $Z_h$ and \eqref{eq:kernel-error} with
$\tau_h=\xi_h$ yield
\[
 \|\xi_h\|_{L^2(\Omega)}
 \lesssim \|\rho_\sigma\|_{L^2(\Omega)}.
\]
The sharp bound follows from this energy estimate.
\end{proof}

The role of the preceding lemma is solely to close the global pollution
term.  Its adjoint regularity
\eqref{eq:HHJ-global-adjoint-regularity} is
independent of the potential $H^3$ shift
\eqref{eq:HHJ-potential-H3-shift} and is not implied here by convexity
alone; consequently, both the estimates for $p\ge1$ and the optimized
recovery rates below are conditional on it.

\begin{corollary}[Localized maximum-norm estimates in interpolation form]
\label{cor:HHJ-interpolation-form-local-maximum-norm}
Assume the hypotheses of
Theorem~\ref{thm:HHJ-local-maximum-norm}.

For $p=0$,
\begin{equation}
 \|\sigma-\sigma_h\|_{L^\infty(\mathcal O;\mathcal T_h)}
 \lesssim
 \bigl(1+\log(d/h)\bigr)
 \|\rho_\sigma\|_{L^\infty(\mathcal O_{3d};\mathcal T_h)}
 +
 d^{-1}\|\rho_\sigma\|_{L^2(\Omega)}.
 \label{eq:HHJ-p0-interpolation-form-local-maximum-norm}
\end{equation}

Let $p\ge1$.  In addition, assume the global HHJ adjoint regularity
in Lemma~\ref{lem:HHJ-global-negative-norm}.  The value estimate is
\begin{equation*}
 \|\sigma-\sigma_h\|_{L^\infty(\mathcal O;\mathcal T_h)}
 \lesssim
 \|\rho_\sigma\|_{L^\infty(\mathcal O_{3d};\mathcal T_h)}
 +
 hd^{-2}\|\rho_\sigma\|_{L^2(\Omega)}.
\end{equation*}
The corresponding gradient estimate is
\begin{align}
 &
 \|\nabla\sigma-\nabla_h\sigma_h\|_
 {L^\infty(\mathcal O;\mathcal T_h)}
 \notag\\
 &\qquad\lesssim
 \left[
 \|\nabla_h\rho_\sigma\|_
 {L^\infty(\mathcal O_{3d};\mathcal T_h)}
 +
 h^{-1}
 \|\rho_\sigma\|_
 {L^\infty(\mathcal O_{3d};\mathcal T_h)}
 \right]
 +
 hd^{-3}\|\rho_\sigma\|_{L^2(\Omega)}.
 \label{eq:HHJ-interpolation-form-local-gradient-maximum-norm}
\end{align}
\end{corollary}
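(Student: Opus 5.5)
The corollary follows by substituting the global bounds for $\xi_h$ into the pollution terms of Theorem~\ref{thm:HHJ-local-maximum-norm}. The interpolation terms $\|\rho_\sigma\|_{L^\infty(\mathcal O_{3d};\mathcal T_h)}$ and $\|\nabla_h\rho_\sigma\|_{L^\infty(\mathcal O_{3d};\mathcal T_h)}$ already appear in the required form, so only the terms involving $\xi_h$ need to be changed.

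\emph{Case $p=0$.} Estimate \eqref{eq:HHJ-p0-local-maximum-norm} contains the pollution term $d^{-1}\|\xi_h\|_{L^2(\Omega)}$. We bound $\|\xi_h\|_{L^2(\Omega)}$ by the kernel energy estimate. By the Kernel equation lemma, $\xi_h\in Z_h$. Testing \eqref{eq:kernel-error} with $\tau_h=\xi_h$ and using the uniform positive definiteness of $\mathcal K$ gives
\[
 c\|\xi_h\|_{L^2(\Omega)}^2\le a(\xi_h,\xi_h)=-a(\rho_\sigma,\xi_h)\lesssim\|\rho_\sigma\|_{L^2(\Omega)}\|\xi_h\|_{L^2(\Omega)},
\]
so $\|\xi_h\|_{L^2(\Omega)}\lesssim\|\rho_\sigma\|_{L^2(\Omega)}$. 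This argument uses no adjoint regularity, so it is valid for every $p$. Inserting it into \eqref{eq:HHJ-p0-local-maximum-norm} gives \eqref{eq:HHJ-p0-interpolation-form-local-maximum-norm}.

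\emph{Case $p\ge1$.} Here the pollution terms are $d^{-2}\|\xi_h\|_{H^{-1}(\Omega;\mathbb S)}$ in \eqref{eq:HHJ-p-ge-1-local-maximum-norm} and $d^{-3}\|\xi_h\|_{H^{-1}(\Omega;\mathbb S)}$ in \eqref{eq:HHJ-local-gradient-maximum-norm}. Under the additional adjoint regularity \eqref{eq:HHJ-global-adjoint-regularity}, Lemma~\ref{lem:HHJ-global-negative-norm} gives the sharp bound $\|\xi_h\|_{H^{-1}(\Omega;\mathbb S)}\lesssim h\|\rho_\sigma\|_{L^2(\Omega)}$. That bound is obtained from the preliminary duality estimate combined with the energy bound above. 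Substituting it into the two pollution terms gives $hd^{-2}\|\rho_\sigma\|_{L^2(\Omega)}$ and $hd^{-3}\|\rho_\sigma\|_{L^2(\Omega)}$, which are the asserted value and gradient estimates, including \eqref{eq:HHJ-interpolation-form-local-gradient-maximum-norm}.

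The only point needing care is consistency of hypotheses. Lemma~\ref{lem:HHJ-global-negative-norm} is stated for $\mathcal K\in W^{1,\infty}(\Omega)$, which the constant $\mathcal K$ assumed in Theorem~\ref{thm:HHJ-local-maximum-norm} satisfies. Its $H^{-1}(\Omega;\mathbb S)$ norm is the same dual norm (against $H_0^1(\Omega;\mathbb S)$) as the one used in the theorem. Beyond that check the argument is direct substitution, and I do not expect any genuine obstacle.
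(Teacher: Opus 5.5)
Your proposal is correct and matches the paper's proof. The paper closes the $p=0$ pollution term with the energy bound $\|\xi_h\|_{L^2(\Omega)}\lesssim\|\rho_\sigma\|_{L^2(\Omega)}$, obtained by testing \eqref{eq:kernel-error} with $\xi_h$, and closes the $p\ge1$ terms by substituting $\|\xi_h\|_{H^{-1}(\Omega;\mathbb S)}\lesssim h\|\rho_\sigma\|_{L^2(\Omega)}$ from Lemma~\ref{lem:HHJ-global-negative-norm}, exactly as you do.
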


\begin{proof}
Testing \eqref{eq:kernel-error} with $\xi_h\in Z_h$ and using
ellipticity gives
$\|\xi_h\|_{L^2(\Omega)}\lesssim\|\rho_\sigma\|_{L^2(\Omega)}$.
Substitution in \eqref{eq:HHJ-p0-local-maximum-norm} proves
\eqref{eq:HHJ-p0-interpolation-form-local-maximum-norm}.  For $p\ge1$, substitute
Lemma~\ref{lem:HHJ-global-negative-norm} into
\eqref{eq:HHJ-p-ge-1-local-maximum-norm} and
\eqref{eq:HHJ-local-gradient-maximum-norm} to obtain the claimed
value and gradient estimates in interpolation form.
\end{proof}

\begin{corollary}[Interior a priori rates for HHJ bending moments]
\label{cor:HHJ-interior-apriori-rates}
Under the hypotheses of
Corollary~\ref{cor:HHJ-interpolation-form-local-maximum-norm}, assume that $ \sigma
 \in
 W^{p+1,\infty}(\mathcal O_{4d};\mathbb S)
 \cap
 H^{p+1}(\Omega;\mathbb S)$.

For $p=0$,
\begin{align*}
 \|\sigma-\sigma_h\|_{L^\infty(\mathcal O;\mathcal T_h)}
 &\lesssim
 h\bigl(1+\log(d/h)\bigr)
 |\sigma|_{W^{1,\infty}(\mathcal O_{4d})}
 +
 hd^{-1}|\sigma|_{H^1(\Omega)}.
\end{align*}

For $p\ge1$,
\begin{align*}
 \|\sigma-\sigma_h\|_{L^\infty(\mathcal O;\mathcal T_h)}
 &\lesssim
 h^{p+1}
 |\sigma|_{W^{p+1,\infty}(\mathcal O_{4d})}
 +
 h^{p+2}d^{-2}
 |\sigma|_{H^{p+1}(\Omega)},
 \\
 \|\nabla\sigma-\nabla_h\sigma_h\|_
 {L^\infty(\mathcal O;\mathcal T_h)}
 &\lesssim
 h^p
 |\sigma|_{W^{p+1,\infty}(\mathcal O_{4d})}
 +
 h^{p+2}d^{-3}
 |\sigma|_{H^{p+1}(\Omega)}.
\end{align*}

In particular, if $d>0$ is fixed independently of $h$, then
\[
 \|\sigma-\sigma_h\|_{L^\infty(\mathcal O;\mathcal T_h)}
 =
 \begin{cases}
  O\!\left(h(1+|\log h|)\right),&p=0,\\
  O(h^{p+1}),&p\ge1
 \end{cases}.
\]
The gradient rate is
\[
 \|\nabla\sigma-\nabla_h\sigma_h\|_
 {L^\infty(\mathcal O;\mathcal T_h)}
 =
 O(h^p),
 \qquad p\ge1.
\]
\end{corollary}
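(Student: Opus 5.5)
The plan is to substitute the local HHJ interpolation bounds into Corollary~\ref{cor:HHJ-interpolation-form-local-maximum-norm}. Every term on the right-hand side there is either an $L^\infty$ norm of $\rho_\sigma=\sigma-\Pi_h\sigma$ (or of $\nabla_h\rho_\sigma$) over $\mathcal O_{3d}$, or a global $L^2$ norm of $\rho_\sigma$. Both kinds are controlled by the canonical interpolation estimate
\[
\|\tau-\Pi_h\tau\|_{W^{j,q}(T)}\lesssim h_T^{s-j}|\tau|_{W^{s,q}(\omega_T)},
\]
used with $s=p+1$.

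\textbf{Local terms.} Take $q=\infty$. For each $T$ meeting $\mathcal O_{3d}$, the fixed-layer patch $\omega_T$ has diameter $O(h)$. Since $d\ge\kappa_0 h$ with $\kappa_0$ large, $\omega_T\subset\mathcal O_{3d+Ch}\subset\mathcal O_{4d}$. Hence:
\begin{itemize}
\item with $j=0$, $\|\rho_\sigma\|_{L^\infty(\mathcal O_{3d};\mathcal T_h)}\lesssim h^{p+1}|\sigma|_{W^{p+1,\infty}(\mathcal O_{4d})}$;
\item with $j=1$ (allowed for $p\ge1$), $\|\nabla_h\rho_\sigma\|_{L^\infty(\mathcal O_{3d};\mathcal T_h)}\lesssim h^{p}|\sigma|_{W^{p+1,\infty}(\mathcal O_{4d})}$.
\end{itemize}
The extra $h^{-1}\|\rho_\sigma\|_{L^\infty}$ term in the gradient estimate is then also $O(h^p)$.

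\textbf{Global term.} Take $q=2$, $j=0$, square, and sum over elements. The finite overlap of the patches $\omega_T$ gives $\|\rho_\sigma\|_{L^2(\Omega)}\lesssim h^{p+1}|\sigma|_{H^{p+1}(\Omega)}$. Inserting this into the pollution terms of the corollary:
\begin{itemize}
\item for $p=0$, $d^{-1}\|\rho_\sigma\|_{L^2}\lesssim hd^{-1}|\sigma|_{H^1}$;
\item for $p\ge1$, $hd^{-2}\|\rho_\sigma\|_{L^2}\lesssim h^{p+2}d^{-2}|\sigma|_{H^{p+1}}$ and $hd^{-3}\|\rho_\sigma\|_{L^2}\lesssim h^{p+2}d^{-3}|\sigma|_{H^{p+1}}$.
\end{itemize}
Combining the local and global bounds gives the three displayed estimates.

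\textbf{Fixed $d$.} When $d$ is independent of $h$, $\log(d/h)\lesssim 1+|\log h|$. The pollution terms are then $O(h)$, $O(h^{p+2})$ and $O(h^{p+2})$, each no larger than the corresponding local term. This yields the stated rates $O(h(1+|\log h|))$ for $p=0$, $O(h^{p+1})$ for $p\ge1$, and $O(h^p)$ for the gradient.

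\textbf{Main obstacle.} The only delicate point is checking that the interpolation patches stay inside $\mathcal O_{4d}$. This comes down to choosing $\kappa_0$ so that $Ch\le d$, which is consistent with the standing assumption $d\ge\kappa_0h$. One also needs the hypotheses of the corollary, including the adjoint regularity for $p\ge1$; these hold by assumption.
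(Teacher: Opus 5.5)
Your proposal is correct and follows the paper's own proof: it inserts the local $L^\infty$ value and broken-gradient interpolation bounds on $\mathcal O_{3d}$ (in terms of $|\sigma|_{W^{p+1,\infty}(\mathcal O_{4d})}$) and the global bound $\|\rho_\sigma\|_{L^2(\Omega)}\lesssim h^{p+1}|\sigma|_{H^{p+1}(\Omega)}$ into the interpolation-form corollary, then fixes $d$. Your observation that $\kappa_0$ must be large enough for the patches $\omega_T$ of elements meeting $\mathcal O_{3d}$ to lie in $\mathcal O_{4d}$ makes explicit a step the paper leaves implicit.
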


\begin{proof}
The canonical HHJ interpolation estimates give
\begin{align*}
 \|\rho_\sigma\|_{L^\infty(\mathcal O_{3d};\mathcal T_h)}
 &\lesssim
 h^{p+1}
 |\sigma|_{W^{p+1,\infty}(\mathcal O_{4d})},\\
 \|\nabla_h\rho_\sigma\|_{L^\infty(\mathcal O_{3d};\mathcal T_h)}
 &\lesssim
 h^p
 |\sigma|_{W^{p+1,\infty}(\mathcal O_{4d})},\\
 \|\rho_\sigma\|_{L^2(\Omega)}
 &\lesssim
 h^{p+1}|\sigma|_{H^{p+1}(\Omega)}.
\end{align*}
Substitution into
\eqref{eq:HHJ-p0-interpolation-form-local-maximum-norm}--
\eqref{eq:HHJ-interpolation-form-local-gradient-maximum-norm}
gives the stated estimates.  The fixed-region rates follow by taking
$d$ independent of $h$.
\end{proof}

\subsection{Symmetric recovery and parity}

Fix an interior mesh vertex $x_0$, let $\mathcal S_h(x_0)$ denote its
element star, and define the antipodal reflection
$\mathscr Sx:=2x_0-x$.  The recovery construction below pairs elements
in this star through $\mathscr S$, so its gain is governed by the parity
of the interpolation error under this reflection.

\begin{assumption}[Local reflection symmetry]
\label{ass:HHJ-local-reflection-symmetry}
Assume $B_{4d}(x_0)\Subset\Omega$ and the following conditions.
\begin{enumerate}
\item The local mesh patches in $B_{4d}(x_0)$ used in the pointwise
construction are invariant under $\mathscr S$, with elements paired
as $T$ and $\mathscr S T$; in particular, $\mathcal S_h(x_0)$ is invariant.
The local HHJ stress and potential spaces are invariant under the
reflection pullbacks, and the canonical interpolant satisfies
$\Pi_h(q\circ\mathscr S)=(\Pi_hq)\circ\mathscr S$ on these pairs.
The regularized value and derivative sources on $T$ and $\mathscr S T$
are chosen equivariantly, and $\mathcal K$ is reflection invariant in
Theorem~\ref{thm:HHJ-local-maximum-norm}.
\item The recovery weights satisfy
\[
 \lambda_T=\lambda_{\mathscr S T},
 \qquad
 \sum_{T\in\mathcal S_h(x_0)}\lambda_T=1,
 \qquad
 \sum_{T\in\mathcal S_h(x_0)}|\lambda_T|\lesssim1.
\]
\end{enumerate}
\end{assumption}

For vector potentials, set $\mathscr P v:=-v\circ\mathscr S$.  Then
$\symcurl(\mathscr P v)=(\symcurl v)\circ\mathscr S$.  For the
antipodal reflection, the natural pullback of a symmetric tensor
reduces to composition, since
$(-I)\tau(-I)^T=\tau$.

\begin{assumption}[Reflection-compatible discrete Green construction]
\label{ass:HHJ-Galerkin-reflection}
The global potential space is invariant under $\mathscr P$, and the
global Galerkin projection in \eqref{eq:HHJ-potential-Galerkin-Green} commutes with
$\mathscr P$: whenever
\[
 \mathcal A_\Omega(\Phi_h,\psi_h)
 =
 \mathcal A_\Omega(v,\psi_h)
 \qquad
 \forall\psi_h\in W_{h,0},
\]
the reflected fields satisfy
\[
 \mathcal A_\Omega(\mathscr P\Phi_h,\psi_h)
 =
 \mathcal A_\Omega(\mathscr Pv,\psi_h)
 \qquad
 \forall\psi_h\in W_{h,0}.
\]
All auxiliary fields, cutoffs, and operators in the paired Green-stress
localization construction are equivariant under the reflection
pullbacks, including the local action of $J_h$, the representatives
$\Phi^{(m)}$, $r_0^{(m)}$ in
$v^{(m)}=\chi_0(\Phi^{(m)}-r_0^{(m)})$, and the affine representatives
entering $\mathscr C_h$.
\end{assumption}

\begin{remark}
Assumption~\ref{ass:HHJ-Galerkin-reflection} follows from uniqueness of the
Galerkin problem if $\Omega$, the global triangulation, $\mathcal K$, the
homogeneous boundary conditions, and the cutoff construction are all
invariant under $\mathscr S$.  Local symmetry of the star alone does
not imply global Galerkin compatibility. 
\end{remark}

Define the recovered value and recovered broken gradient by
\begin{align*}
 \mathscr M_h\sigma_h(x_0)
 &:=
 \sum_{T\in\mathcal S_h(x_0)}
 \lambda_T\,\sigma_h|_T(x_0),
 \\
 \mathscr M_h^\nabla\sigma_h(x_0)
 &:=
 \sum_{T\in\mathcal S_h(x_0)}
 \lambda_T\,\nabla(\sigma_h|_T)(x_0).
\end{align*}
For a tensor field $q$, write
\[
 q^+(x):=\frac12\bigl(q(x)+q(\mathscr Sx)\bigr),
 \qquad
 q^-(x):=\frac12\bigl(q(x)-q(\mathscr Sx)\bigr).
\]
In particular,
\[
 \sigma(x_0)=\sigma^+(x_0),
 \qquad
 \nabla\sigma(x_0)=\nabla\sigma^-(x_0).
\]
Recovery therefore selects even values and odd first derivatives.

\begin{lemma}[Parity-improved HHJ interpolation]
\label{lem:HHJ-parity-interpolation}
Assume the reflection equivariance in
Assumption~\ref{ass:HHJ-local-reflection-symmetry} and let
$\rho_\sigma=\sigma-\Pi_h\sigma$.

If $p\ge0$ is even and
$\sigma\in W^{p+2,\infty}(B_{4d}(x_0);\mathbb S)$, then
$(\rho_\sigma)^+=\sigma^+-\Pi_h\sigma^+$, and
\begin{equation*}
 \|(\rho_\sigma)^+\|_
 {L^\infty(B_{3d}(x_0);\mathcal T_h)}
 \lesssim
 h^{p+1}d
 |\sigma|_{W^{p+2,\infty}(B_{4d}(x_0))}.
\end{equation*}

If $p\ge1$ is odd and
$\sigma\in W^{p+2,\infty}(B_{4d}(x_0);\mathbb S)$, then
$(\rho_\sigma)^-=\sigma^--\Pi_h\sigma^-$, and
\begin{align*}
 \|\nabla_h(\rho_\sigma)^-\|_
 {L^\infty(B_{3d}(x_0);\mathcal T_h)}
 +
 h^{-1}\|(\rho_\sigma)^-\|_
 {L^\infty(B_{3d}(x_0);\mathcal T_h)}
\lesssim
 h^p d
 |\sigma|_{W^{p+2,\infty}(B_{4d}(x_0))}.
\end{align*}
\end{lemma}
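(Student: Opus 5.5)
The plan has two steps. First, use reflection equivariance of $\Pi_h$ to identify the even and odd parts of the interpolation error. Then apply the Taylor expansion at $x_0$ and exploit parity to gain one order, with the extra factor $d$ coming from the distance $|x-x_0|\lesssim d$ on $B_{3d}(x_0)$.

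\emph{Step 1: commutation.} By Assumption~\ref{ass:HHJ-local-reflection-symmetry}(1), $\Pi_h(\sigma\circ\mathscr S)=(\Pi_h\sigma)\circ\mathscr S$ on the reflected pairs. For the antipodal reflection the tensor pullback is plain composition, and $\Pi_h$ is linear. Hence $(\Pi_h\sigma)^\pm=\Pi_h\sigma^\pm$, and therefore $(\rho_\sigma)^\pm=\sigma^\pm-\Pi_h\sigma^\pm$ on $B_{3d}(x_0)$. The locality of $\Pi_h$ on fixed patches $\omega_T\subset B_{4d}(x_0)$, which holds since $d\ge\kappa_0h$, justifies applying this elementwise.

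\emph{Step 2: even case, $p$ even.} Let $P$ be the Taylor polynomial of $\sigma$ at $x_0$ of degree $p+1$, and write $P=P_{\le p}+P_{p+1}$, where $P_{p+1}$ is the homogeneous part of degree $p+1$ in $x-x_0$. Because $p+1$ is odd, $P_{p+1}$ is odd under $\mathscr S$, so $P^+=P^+_{\le p}$ has degree at most $p$. Then
\[
\sigma^+=P_{\le p}^++(\sigma-P)^+ .
\]
Since $\Pi_h$ reproduces $\mathcal P_p(T;\mathbb S)$ locally, the first term contributes nothing to $\sigma^+-\Pi_h\sigma^+$, and so
\[
(\rho_\sigma)^+=(R)^+-\Pi_h(R)^+,\qquad R:=\sigma-P .
\]
Next apply the local interpolation estimate with $s=p+1$ and $q=\infty$ to $R^+$. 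This gives $\lesssim h^{p+1}|R^+|_{W^{p+1,\infty}(\omega_T)}$. Since $D^{p+1}P$ is constant and equal to $D^{p+1}\sigma(x_0)$, we have
\[
D^{p+1}R(x)=D^{p+1}\sigma(x)-D^{p+1}\sigma(x_0),
\]
which is bounded by $|x-x_0|\,|\sigma|_{W^{p+2,\infty}}\lesssim d\,|\sigma|_{W^{p+2,\infty}(B_{4d})}$ for $x\in\omega_T\subset B_{4d}(x_0)$. The same bound holds at $\mathscr Sx$, and so for $R^+$. This yields the claimed $h^{p+1}d$ estimate.

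\emph{Step 3: odd case, $p$ odd.} The argument is identical with the odd part. Now $p+1$ is even, so $P_{p+1}$ is even and $P^-=P^-_{\le p}\in\mathcal P_p$ is reproduced. Hence $(\rho_\sigma)^-=R^--\Pi_hR^-$. The interpolation estimates with $j=0$ and $j=1$ at $s=p+1$ give
\[
\|\nabla_h(\rho_\sigma)^-\|_{L^\infty(T)}+h^{-1}\|(\rho_\sigma)^-\|_{L^\infty(T)}\lesssim h^p|R^-|_{W^{p+1,\infty}(\omega_T)}\lesssim h^pd\,|\sigma|_{W^{p+2,\infty}(B_{4d})} .
\]

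\emph{Main obstacle.} The delicate point is Step 1, which needs the reflection equivariance to hold on the entire patches $\omega_T$ used by $\Pi_h$ and by the local estimates. This is guaranteed by the invariance of the local mesh in $B_{4d}(x_0)$ together with fixed-layer patches. I also need to check that the interpolation estimate may be applied to $R^\pm$, which is merely $W^{p+2,\infty}$ on $B_{4d}(x_0)$; locality makes this fine.
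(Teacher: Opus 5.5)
Your proposal is correct and follows essentially the same route as the paper: equivariance of $\Pi_h$ gives $(\rho_\sigma)^\pm=\sigma^\pm-\Pi_h\sigma^\pm$, and parity makes the $(p+1)$st derivatives of $\sigma^\pm$ vanish at $x_0$. The mean-value bound then supplies the factor $d$, and the $W^{p+1,\infty}$ interpolation estimates finish the argument. Subtracting the Taylor polynomial $P$ is harmless but redundant, since $D^{p+1}R^\pm=D^{p+1}\sigma^\pm$ and the interpolation bound only sees this seminorm; your use of $\omega_T\subset B_{4d}(x_0)$ is slightly more careful than the paper, which states the seminorm bound only on $B_{3d}(x_0)$.
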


\begin{proof}
For even $p$, the derivatives of odd total order of $\sigma^+$ vanish
at $x_0$.  Hence
\[
 D^\beta\sigma^+(x_0)=0,
 \qquad |\beta|=p+1,
\]
and Taylor's theorem yields
\[
 |\sigma^+|_{W^{p+1,\infty}(B_{3d}(x_0))}
 \lesssim
 d|\sigma|_{W^{p+2,\infty}(B_{4d}(x_0))}.
\]
The HHJ interpolation estimate proves the even-degree bound.

For odd $p$, every derivative of even total order of $\sigma^-$
vanishes at $x_0$.  Since $p+1$ is even,
\[
 D^\beta\sigma^-(x_0)=0,
 \qquad |\beta|=p+1.
\]
Taylor's theorem and the value and gradient interpolation estimates
prove the odd-degree bound.  The identities for
$(\rho_\sigma)^\pm$ follow from interpolation equivariance.
\end{proof}

\subsection{Recovered values, derivatives, and optimized rates}

For $T\in\mathcal S_h(x_0)$ and $m\in\{0,1\}$, let
$G_{h,T}^{(m)}$ denote the discrete Green stress constructed with the
corresponding regularized source supported in $T$.

\begin{theorem}[Recovered bending-moment values]
Let $p\ge2$ be even.  Assume the hypotheses of
Theorem~\ref{thm:HHJ-local-maximum-norm},
Assumptions~\ref{ass:HHJ-local-reflection-symmetry}
and~\ref{ass:HHJ-Galerkin-reflection}, and suppose that
\[
 B_{4d}(x_0)\Subset\Omega,
 \qquad
 d\ge\kappa_0h,
 \qquad
 \sigma\in W^{p+2,\infty}(B_{4d}(x_0);\mathbb S).
\]
The recovered value satisfies
\begin{align}
 |\sigma(x_0)-\mathscr M_h\sigma_h(x_0)|
 \lesssim{}&
 h^{p+1}d
 |\sigma|_{W^{p+2,\infty}(B_{4d}(x_0))}
 +
 d^{-2}\|\xi_h\|_{H^{-1}(\Omega;\mathbb S)}.
 \label{eq:HHJ-recovered-value}
\end{align}
\end{theorem}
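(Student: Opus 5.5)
We plan to write the recovered error as a weighted sum of the elementwise pointwise errors and then pair reflected elements. Since $\sum_T\lambda_T=1$ and $\sigma$ is continuous at $x_0$,
\[
 \sigma(x_0)-\mathscr M_h\sigma_h(x_0)
 =\sum_{T\in\mathcal S_h(x_0)}\lambda_T\,(\sigma-\sigma_h|_T)(x_0).
\]
Fix a unit tensor $E$. On each $T$ we use the value-source splitting from the proof of Theorem~\ref{thm:HHJ-local-maximum-norm}, with the source $\delta^E_{h,x_0}$ supported in $T$:
\[
 E:(\sigma-\sigma_h|_T)(x_0)=E:\rho_\sigma|_T(x_0)+L_{0,T}(\xi_h),
\]
where $L_{0,T}$ is the corresponding functional.

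By \eqref{eq:HHJ-value-localized-Green-bound}, valid for $p\ge1$ and uniform in $T$,
\[
 L_{0,T}(\xi_h)
 =-a_U\bigl(\rho_\sigma,\mathscr C_h(\chi,G^{(0)}_{h,T})\bigr)
 +O\bigl(d^{-2}\|\xi_h\|_{H^{-1}(\Omega;\mathbb S)}\bigr).
\]
Summing with $\sum|\lambda_T|\lesssim1$ gives the pollution term in \eqref{eq:HHJ-recovered-value}. It remains to bound
\[
 \sum_T\lambda_T\Bigl[E:\rho_\sigma|_T(x_0)-a_U\bigl(\rho_\sigma,Q^{(0)}_{h,T}\bigr)\Bigr],
 \qquad Q^{(0)}_{h,T}:=\mathscr C_h(\chi,G^{(0)}_{h,T}).
\]

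The key step is to show that, for paired elements $T$ and $\mathscr ST$, the localized Green stresses satisfy
\[
 Q^{(0)}_{h,\mathscr ST}=Q^{(0)}_{h,T}\circ\mathscr S .
\]
We establish this by tracking equivariance through the whole construction. The equivariant sources give $\delta_{\mathscr ST}=\delta_T\circ\mathscr S$. Uniqueness of the clamped problem \eqref{eq:HHJ-clamped-Green-problem} on the reflection-invariant ball $U$, together with the invariance of $\mathcal K$, then gives $G^{(0)}_{\mathscr ST}=G^{(0)}_T\circ\mathscr S$ and hence $\mathscr P\Phi^{(0)}_T$ as a valid representative. Assumption~\ref{ass:HHJ-Galerkin-reflection} transfers this to $v^{(0)}$ and to the Galerkin projections, so $G^{(0)}_{h,\mathscr ST}=G^{(0)}_{h,T}\circ\mathscr S$. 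The identity $\symcurl\mathscr P=(\symcurl\,\cdot)\circ\mathscr S$, the equivariance of $J_h$ and of the affine representatives, and the choice of a radial (hence even) cutoff $\chi$ then give the claim for $Q^{(0)}_h$.

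With $\lambda_T=\lambda_{\mathscr ST}$, we average each pair and change variables $x\mapsto\mathscr Sx$, using that $\mathscr S$ is measure preserving and that $\mathcal K$ is constant. This gives
\[
 \tfrac12\Bigl[a_U\bigl(\rho_\sigma,Q_T\bigr)+a_U\bigl(\rho_\sigma,Q_T\circ\mathscr S\bigr)\Bigr]
 =a_U\bigl((\rho_\sigma)^+,Q_T\bigr),
\]
and similarly $\tfrac12\bigl[\rho_\sigma|_T(x_0)+\rho_\sigma|_{\mathscr ST}(x_0)\bigr]=(\rho_\sigma)^+|_T(x_0)$. Hence the remaining sum equals
\[
 \sum_T\lambda_T\Bigl[E:(\rho_\sigma)^+|_T(x_0)-a_U\bigl((\rho_\sigma)^+,Q^{(0)}_{h,T}\bigr)\Bigr].
\]

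Lemma~\ref{lem:HHJ-parity-interpolation} gives $(\rho_\sigma)^+=\sigma^+-\Pi_h\sigma^+$. This is again an HHJ interpolation error, so for $p\ge1$ its elementwise means vanish. We can therefore apply the log-free estimate \eqref{eq:HHJ-value-interpolation-Green-pairing} with $\rho_\sigma$ replaced by $(\rho_\sigma)^+$. That estimate used only the mean-zero property and $L^\infty$ bounds, so each term is $\lesssim\|(\rho_\sigma)^+\|_{L^\infty(B_{2d}(x_0);\Th)}$. The even-degree part of Lemma~\ref{lem:HHJ-parity-interpolation} bounds this by $h^{p+1}d\,|\sigma|_{W^{p+2,\infty}(B_{4d}(x_0))}$. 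Taking the supremum over $E$ gives \eqref{eq:HHJ-recovered-value}.

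The main obstacle is the equivariance identity for $Q^{(0)}_{h,T}$. Its delicate points are the sign convention of $\mathscr P$ on potentials, the choice of the affine representatives $r_0$ and $r_\chi$ (best approximations on invariant sets are equivariant by uniqueness), and the action of $J_h$. We expect these to be covered precisely by the equivariance clauses of Assumptions~\ref{ass:HHJ-local-reflection-symmetry} and~\ref{ass:HHJ-Galerkin-reflection}, and we would verify each one explicitly.
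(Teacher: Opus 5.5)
Your proposal is correct and matches the paper's proof step for step: the elementwise value-source representation, with the localized Green bound absorbing the pollution term, and the equivariance $Q_{h,\mathscr ST}^{(0)}=Q_{h,T}^{(0)}\circ\mathscr S$ obtained from the two reflection assumptions; then the change of variables reducing each reflected pair to $a_U\bigl((\rho_\sigma)^+,Q_{h,T}^{(0)}\bigr)$, followed by the log-free pairing estimate and Lemma~\ref{lem:HHJ-parity-interpolation}. You also spell out two steps the paper leaves implicit, namely the pairing of the pointwise terms $E:\rho_\sigma|_T(x_0)$ and the fact that the log-free estimate uses only elementwise mean zero and $L^\infty$ bounds, so it applies to $(\rho_\sigma)^+$.
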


\begin{proof}
Fix a tensor component $E$.  For each $T\in\mathcal S_h(x_0)$, use
the element-trace value representation from
Theorem~\ref{thm:HHJ-local-maximum-norm}, with source supported in $T$.
By \eqref{eq:HHJ-value-localized-Green-bound} and
\eqref{eq:HHJ-value-interpolation-Green-pairing}, the remainder is bounded
by $d^{-2}\|\xi_h\|_{H^{-1}(\Omega;\mathbb S)}$.

Pair $T$ with $\mathscr S T$ and set
\[
 Q_{h,T}^{(0)}
 :=
 \mathscr C_h\bigl(\chi,G_{h,T}^{(0)}\bigr).
\]
The two reflection assumptions imply
\[
 Q_{h,\mathscr S T}^{(0)}(x)
 =
 Q_{h,T}^{(0)}(\mathscr Sx).
\]
Since $U$ is invariant under $\mathscr S$,
$|\det D\mathscr S|=1$, and $\mathcal K$ is reflection invariant, a
change of variables yields
\begin{align*}
 &a_U\bigl(\rho_\sigma,Q_{h,T}^{(0)}\bigr)
 +a_U\bigl(\rho_\sigma,Q_{h,\mathscr S T}^{(0)}\bigr)
 \\
 &\qquad=
 a_U\bigl(\rho_\sigma,Q_{h,T}^{(0)}\bigr)
 +a_U\bigl(\rho_\sigma\circ\mathscr S,
            Q_{h,T}^{(0)}\bigr)
 \\
 &\qquad=
 2a_U\bigl((\rho_\sigma)^+,Q_{h,T}^{(0)}\bigr).
\end{align*}
Equal reflected weights, their stability, and the localized Green
bound give
\[
 |\sigma(x_0)-\mathscr M_h\sigma_h(x_0)|
 \lesssim
 \|(\rho_\sigma)^+\|_
 {L^\infty(B_{3d}(x_0);\mathcal T_h)}
 +
 d^{-2}\|\xi_h\|_{H^{-1}(\Omega;\mathbb S)}.
\]
Lemma~\ref{lem:HHJ-parity-interpolation} gives \eqref{eq:HHJ-recovered-value}.
\end{proof}

\begin{remark}[The lowest-order case]
\label{rem:HHJ-lowest-order-recovery}
For $p=0$, reflection improves local interpolation, but the pollution
term $d^{-1}\|\xi_h\|_{L^2(\Omega)}$ remains.  The present argument
therefore gives no fully a priori lowest-order superconvergence rate.
\end{remark}

\begin{theorem}[Recovered first derivatives]
Let $p\ge1$ be odd.  Assume the hypotheses of
Theorem~\ref{thm:HHJ-local-maximum-norm},
Assumptions~\ref{ass:HHJ-local-reflection-symmetry}
and~\ref{ass:HHJ-Galerkin-reflection}, and suppose that
\[
 B_{4d}(x_0)\Subset\Omega,
 \qquad
 d\ge\kappa_0h,
 \qquad
 \sigma\in W^{p+2,\infty}(B_{4d}(x_0);\mathbb S).
\]
The recovered gradient satisfies
\begin{align}
 &
 |\nabla\sigma(x_0)-\mathscr M_h^\nabla\sigma_h(x_0)|
\lesssim
 h^p d
 |\sigma|_{W^{p+2,\infty}(B_{4d}(x_0))}
 +
 d^{-3}\|\xi_h\|_{H^{-1}(\Omega;\mathbb S)}.
 \label{eq:HHJ-recovered-derivative}
\end{align}
\end{theorem}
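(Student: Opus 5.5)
The argument mirrors the proof of the recovered-value theorem, with the derivative source replacing the value source and the odd part $(\rho_\sigma)^-$ replacing the even part. Fix a unit tensor $E$ and a unit direction $\nu$. For each $T\in\mathcal S_h(x_0)$ we use the derivative representation from Theorem~\ref{thm:HHJ-local-maximum-norm}, with the source $\delta^{E,\nu}_{h,x_0}$ supported in $T$:
\[
E:\partial_\nu(\sigma-\sigma_h|_T)(x_0)
=E:\partial_\nu(\rho_\sigma|_T)(x_0)+L_{1,T}(\xi_h).
\]
By \eqref{eq:HHJ-derivative-localized-Green-bound}, $L_{1,T}(\xi_h)=-a_U(\rho_\sigma,Q^{(1)}_{h,T})+O(d^{-3}\|\xi_h\|_{H^{-1}(\Omega;\mathbb S)})$, where $Q^{(1)}_{h,T}:=\mathscr C_h(\chi,G^{(1)}_{h,T})$. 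Since $\nabla\sigma(x_0)=\sum_T\lambda_T\nabla\sigma(x_0)$, we have $\nabla\sigma(x_0)-\mathscr M_h^\nabla\sigma_h(x_0)=\sum_T\lambda_T\nabla(\sigma-\sigma_h|_T)(x_0)$. Summing with the weights $\lambda_T$ and using $\sum|\lambda_T|\lesssim1$, it remains to control the paired combination
\[
\sum_T\lambda_T\bigl[\partial_\nu(\rho_\sigma|_T)(x_0)-a_U(\rho_\sigma,Q^{(1)}_{h,T})\bigr].
\]

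The key step is the reflection pairing. Because $\mathscr S$ reverses directions, the equivariant choice of derivative sources gives $\delta^{E,\nu}_{\mathscr ST}=-\delta^{E,\nu}_T\circ\mathscr S$: the source is $-\partial_\nu$ of an equivariant value source, and $\partial_\nu(f\circ\mathscr S)=-(\partial_\nu f)\circ\mathscr S$. Assumptions~\ref{ass:HHJ-local-reflection-symmetry} and~\ref{ass:HHJ-Galerkin-reflection} then yield $Q^{(1)}_{h,\mathscr ST}=-Q^{(1)}_{h,T}\circ\mathscr S$. This propagates through the local Green problem, the potentials, the Galerkin projection, and $\mathscr C_h$, all of which are linear in the source and commute with the pullbacks.

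A change of variables, using $U=\mathscr SU$ and the reflection invariance of $\mathcal K$, gives
\[
a_U(\rho_\sigma,Q^{(1)}_{h,T})+a_U(\rho_\sigma,Q^{(1)}_{h,\mathscr ST})=2a_U((\rho_\sigma)^-,Q^{(1)}_{h,T}).
\]
Similarly, $\partial_\nu(\rho_\sigma|_T)(x_0)+\partial_\nu(\rho_\sigma|_{\mathscr ST})(x_0)=2\partial_\nu((\rho_\sigma)^-|_T)(x_0)$, since the even part has vanishing derivative under the pairing. Combining this with $\lambda_T=\lambda_{\mathscr ST}$, only $(\rho_\sigma)^-$ survives in the combination.

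We then apply \eqref{eq:HHJ-derivative-interpolation-Green-pairing} with $\rho_\sigma$ replaced by $(\rho_\sigma)^-$. That pairing estimate uses only $\|Q_h^{(1)}\|_{L^1(U)}\lesssim h^{-1}$ and the boundedness of $\mathcal K$, not any property specific to $\rho_\sigma$. This gives
\[
|\nabla\sigma(x_0)-\mathscr M_h^\nabla\sigma_h(x_0)|
\lesssim\|\nabla_h(\rho_\sigma)^-\|_{L^\infty(B_{3d};\mathcal T_h)}
+h^{-1}\|(\rho_\sigma)^-\|_{L^\infty(B_{3d};\mathcal T_h)}
+d^{-3}\|\xi_h\|_{H^{-1}(\Omega;\mathbb S)}.
\]
The odd-degree case of Lemma~\ref{lem:HHJ-parity-interpolation} bounds the bracket by $h^pd|\sigma|_{W^{p+2,\infty}(B_{4d}(x_0))}$, which proves \eqref{eq:HHJ-recovered-derivative}. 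Taking the maximum over $E$ and $\nu$ gives the full gradient.

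The main obstacle is verifying the sign-twisted equivariance $Q^{(1)}_{h,\mathscr ST}=-Q^{(1)}_{h,T}\circ\mathscr S$. The value case carries no sign, but here the derivative source is odd, and this sign must be tracked through the affine representatives $r_0^{(1)}$, $r_\chi$ and through $J_h$. Since $\mathcal R(D)$ is invariant under $\mathscr P$ when $D$ is $\mathscr S$-symmetric, the best approximations in $\mathcal R$ commute with $\mathscr P$, so linearity should carry the sign consistently. I would check this step explicitly before the pairing computation.
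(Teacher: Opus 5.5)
Your proposal is correct and follows essentially the same route as the paper. Both arguments use the sign-twisted equivariance $Q^{(1)}_{h,\mathscr ST}=-Q^{(1)}_{h,T}\circ\mathscr S$, the change of variables that produces $2a_U((\rho_\sigma)^-,Q^{(1)}_{h,T})$, the $L^1$-based pairing bound \eqref{eq:HHJ-derivative-interpolation-Green-pairing} applied to the odd part, and the odd-degree case of Lemma~\ref{lem:HHJ-parity-interpolation}. Two points the paper leaves implicit are covered in your version: the pairing of the local terms $\partial_\nu(\rho_\sigma|_T)(x_0)$, which you write out, and the equivariance of the affine representatives you flag as the main obstacle, which the paper simply builds into Assumption~\ref{ass:HHJ-Galerkin-reflection}.
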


\begin{proof}
Apply the element-trace derivative representation in the proof of
Theorem~\ref{thm:HHJ-local-maximum-norm} on every
$T\in\mathcal S_h(x_0)$, and set
\[
 Q_{h,T}^{(1)}
 :=\mathscr C_h\bigl(\chi,G_{h,T}^{(1)}\bigr).
\]
The derivative source is odd under $\mathscr S$, and the two reflection
assumptions imply
\[
 Q_{h,\mathscr S T}^{(1)}(x)
 =
 -Q_{h,T}^{(1)}(\mathscr Sx).
\]
Reflection invariance of $U$ and $\mathcal K$ and the identity
$|\det D\mathscr S|=1$ yield
\begin{align*}
 &a_U\bigl(\rho_\sigma,Q_{h,T}^{(1)}\bigr)
 +a_U\bigl(\rho_\sigma,Q_{h,\mathscr S T}^{(1)}\bigr)
 \\
 &\qquad=
 a_U\bigl(\rho_\sigma,Q_{h,T}^{(1)}\bigr)
 -a_U\bigl(\rho_\sigma\circ\mathscr S,
            Q_{h,T}^{(1)}\bigr)
 \\
 &\qquad=
 2a_U\bigl((\rho_\sigma)^-,Q_{h,T}^{(1)}\bigr).
\end{align*}
As in the value case, equal reflected weights, their stability,
\eqref{eq:HHJ-derivative-localized-Green-bound}, and
\eqref{eq:HHJ-derivative-interpolation-Green-pairing} give
\begin{align*}
 &
 |\nabla\sigma(x_0)-\mathscr M_h^\nabla\sigma_h(x_0)|
 \\
 &\qquad\lesssim
 \|\nabla_h(\rho_\sigma)^-\|_
 {L^\infty(B_{3d}(x_0);\mathcal T_h)}
 +
 h^{-1}\|(\rho_\sigma)^-\|_
 {L^\infty(B_{3d}(x_0);\mathcal T_h)}
 +
 d^{-3}\|\xi_h\|_{H^{-1}(\Omega;\mathbb S)}.
\end{align*}
Lemma~\ref{lem:HHJ-parity-interpolation} gives \eqref{eq:HHJ-recovered-derivative}.
\end{proof}

\begin{corollary}[Optimized recovery estimates]
\label{cor:HHJ-optimized-superconvergence}
Assume the hypotheses of the corresponding recovery theorem and the
global HHJ adjoint regularity in
Lemma~\ref{lem:HHJ-global-negative-norm}, and suppose in addition that
$\sigma\in H^{p+1}(\Omega;\mathbb S)$.

For even $p\ge2$,
\begin{align*}
 |\sigma(x_0)-\mathscr M_h\sigma_h(x_0)|
 \lesssim{}&
 h^{p+1}d
 |\sigma|_{W^{p+2,\infty}(B_{4d}(x_0))}
 +
 h^{p+2}d^{-2}|\sigma|_{H^{p+1}(\Omega)}.
\end{align*}
For odd $p\ge1$,
\begin{align*}
 &
 |\nabla\sigma(x_0)-\mathscr M_h^\nabla\sigma_h(x_0)|
\lesssim
 h^p d
 |\sigma|_{W^{p+2,\infty}(B_{4d}(x_0))}
 +
 h^{p+2}d^{-3}|\sigma|_{H^{p+1}(\Omega)}.
\end{align*}

If $x_0$ is fixed independently of $h$, then choosing $d=h^{1/3}$
for even $p\ge2$ yields
\begin{equation*}
 |\sigma(x_0)-\mathscr M_h\sigma_h(x_0)|
 \lesssim
 h^{p+4/3}
 \left(
 |\sigma|_{W^{p+2,\infty}(B_{4d}(x_0))}
 +
 |\sigma|_{H^{p+1}(\Omega)}
 \right),
\end{equation*}
whereas choosing $d=h^{1/2}$ for odd $p\ge1$ yields
\begin{align*}
 &
 |\nabla\sigma(x_0)-\mathscr M_h^\nabla\sigma_h(x_0)|
\lesssim
 h^{p+1/2}
 \left(
 |\sigma|_{W^{p+2,\infty}(B_{4d}(x_0))}
 +
 |\sigma|_{H^{p+1}(\Omega)}
 \right).
\end{align*}
\end{corollary}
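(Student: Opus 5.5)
The plan is to combine the two recovery theorems with the global negative-norm estimate of Lemma~\ref{lem:HHJ-global-negative-norm} and the standard global interpolation bound, and then balance the local and pollution terms by choosing $d$.

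\emph{Step 1: pollution term.} By Lemma~\ref{lem:HHJ-global-negative-norm}, under the adjoint regularity \eqref{eq:HHJ-global-adjoint-regularity},
\[
\|\xi_h\|_{H^{-1}(\Omega;\mathbb S)}\lesssim h\|\rho_\sigma\|_{L^2(\Omega)}.
\]
The canonical HHJ interpolation estimate with $q=2$, $j=0$, $s=p+1$, summed over elements (finite patch overlap), gives
\[
\|\rho_\sigma\|_{L^2(\Omega)}\lesssim h^{p+1}|\sigma|_{H^{p+1}(\Omega)}.
\]
Hence $\|\xi_h\|_{H^{-1}}\lesssim h^{p+2}|\sigma|_{H^{p+1}(\Omega)}$.

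\emph{Step 2: the two displayed bounds.} Substituting this into \eqref{eq:HHJ-recovered-value} for even $p\ge2$ gives the stated value estimate. Substituting it into \eqref{eq:HHJ-recovered-derivative} for odd $p\ge1$ gives the stated derivative estimate. The hypotheses of the recovery theorems are assumed, and the corollary adds the global adjoint regularity and $\sigma\in H^{p+1}(\Omega)$, so nothing further needs checking.

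\emph{Step 3: choice of $d$.} With $x_0$ fixed, $B_{4d}(x_0)\Subset\Omega$ holds for all sufficiently small $h$ once $d\to0$. The constraint $d\ge\kappa_0h$ holds for $d=h^{1/3}$ or $d=h^{1/2}$ once $h$ is small, since $h^{1/3}/h\to\infty$ and likewise for $h^{1/2}$. For even $p$ we balance $h^{p+1}d$ against $h^{p+2}d^{-2}$, which gives $d^3=h$. Both terms then equal $h^{p+4/3}$. For odd $p$ we balance $h^pd$ against $h^{p+2}d^{-3}$, which gives $d^4=h^2$, i.e.\ $d=h^{1/2}$. Both terms then equal $h^{p+1/2}$. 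Factoring out the common power gives the displayed optimized bounds. The seminorm $|\sigma|_{W^{p+2,\infty}(B_{4d}(x_0))}$ is evaluated at the chosen $d$, and it can be bounded by its value on any fixed ball if uniformity in $h$ is wanted.

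\emph{Main obstacle.} The only delicate point is that the implied constants must be uniform in $d$ as $d$ varies with $h$. This is guaranteed by the convention in the paper that constants are independent of the localization scale. The other thing to verify is that $\kappa_0$-admissibility, $d\ge\kappa_0h$, holds for small $h$, which was checked in Step 3. Otherwise the argument is direct substitution.
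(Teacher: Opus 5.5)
Your proposal is correct and matches the paper's proof essentially step for step. Lemma~\ref{lem:HHJ-global-negative-norm} and the global interpolation bound give $\|\xi_h\|_{H^{-1}(\Omega;\mathbb S)}\lesssim h^{p+2}|\sigma|_{H^{p+1}(\Omega)}$, which you substitute into \eqref{eq:HHJ-recovered-value} and \eqref{eq:HHJ-recovered-derivative}; the choices $d=h^{1/3}$ and $d=h^{1/2}$ then balance the two powers, and $d\ge\kappa_0h$ and $B_{4d}(x_0)\Subset\Omega$ hold for sufficiently small $h$.
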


\begin{proof}
Lemma~\ref{lem:HHJ-global-negative-norm} and the global HHJ
interpolation estimate yield
\[
 \|\xi_h\|_{H^{-1}(\Omega;\mathbb S)}
 \lesssim
 h\|\rho_\sigma\|_{L^2(\Omega)}
 \lesssim
 h^{p+2}|\sigma|_{H^{p+1}(\Omega)}.
\]
Substitution into \eqref{eq:HHJ-recovered-value} and
\eqref{eq:HHJ-recovered-derivative} gives the stated estimates.

For $d=h^\alpha$, the two powers in the even case are
$p+1+\alpha$ and $p+2-2\alpha$, which balance at
$\alpha=1/3$.  In the odd derivative case they are
$p+\alpha$ and $p+2-3\alpha$, which balance at $\alpha=1/2$.
Both choices satisfy $d/h\to\infty$, hence $d\ge\kappa_0h$ for
sufficiently small $h$; for fixed interior $x_0$,
$B_{4d}(x_0)\Subset\Omega$ also holds for sufficiently small $h$.
\end{proof}

\begin{remark}[Proved recovery orders]
The theory proves $h^{p+4/3}$ for even-degree recovered values and
$h^{p+1/2}$ for odd-degree recovered first derivatives.  The full orders
$h^{p+2}$ and $h^{p+1}$ observed on globally uniform symmetric meshes
require additional cancellation of the pollution term, which is not
established here.
\end{remark}

\begin{remark}[Lowest-order mixed and nonconforming correspondences]
The equivalence between the lowest-order HHJ method and the modified
Morley method identifies the HHJ moment with the corresponding corrected
broken Hessian of the Morley solution
\cite{ArnoldBrezzi1985,HuMa2016}.  Consequently, the localized
maximum-norm estimate
\eqref{eq:HHJ-p0-interpolation-form-local-maximum-norm} and the
conditional symmetry improvement in
Remark~\ref{rem:HHJ-lowest-order-recovery} transfer to the corrected
broken Hessian of the Morley solution.  The present argument does not,
however, yield a fully a priori lowest-order Morley superconvergence
rate because the pollution term remains.  Analogous second-order
equivalences on triangular and rectangular meshes carry
Raviart--Thomas superconvergence to the corresponding nonconforming
methods
\cite{HuMa2016,Li2018,LiRannacherTurek2021}.
\end{remark}

\section{Numerical experiments}
All numerical experiments were performed with NGSolve
\cite{Schoeberl2014}.  We test the predicted parity dependence of
symmetric recovery by comparing bending-moment values and elementwise
first derivatives for even and odd stress degrees on uniform and
nonuniform meshes.  No localization scale $d$ enters the computation;
the choices of $d$ in
Corollary~\ref{cor:HHJ-optimized-superconvergence} arise only in the
analytical optimization of the estimates.

\subsection{Test problem, meshes, and error measures}

Let
\[
 \Omega=(-1,1)^2,
 \qquad
 x_0=(0,0).
\]
We take
$\mathcal C=\mathcal K=\operatorname{Id}_{\mathbb S}$, the identity on
$\mathbb S$, and use the smooth manufactured
deflection
\[
 u(x,y)=(1-x^2)^2(1-y^2)^2e^{x+2y}.
\]
Then $\sigma=\nabla^2u$ and $f=\DivDiv\sigma=\Delta^2u$, while the
squared boundary factors enforce the clamped conditions
$u=\partial_n u=0$ on $\partial\Omega$.

Joining $x_0$ to the four corners $(\pm1,\pm1)$ and four side
midpoints $(\pm1,0)$ and $(0,\pm1)$ produces an initial mesh
$\mathcal T_0$ of eight triangles.  Each subsequent mesh
$\mathcal T_\ell$ is obtained by $\ell$ uniform refinements.  Its mesh
size is
\[
 h_\ell:=\max_{T\in\mathcal T_\ell}\operatorname{diam}(T)
 =\sqrt2\,2^{-\ell}.
\]
The reflection $\mathscr Sx=-x$ leaves every $\mathcal T_\ell$
invariant, and the central star contains eight elements in four
reflection pairs.  Equal recovery weights $1/8$, together with the
reflection invariance of the domain, coefficient, boundary conditions,
and discrete constructions, satisfy
Assumptions~\ref{ass:HHJ-local-reflection-symmetry}
and~\ref{ass:HHJ-Galerkin-reflection}.  We use $p=2$ to test recovered
values and $p=3$ to test recovered first derivatives; the $p=2$
derivative and $p=3$ value serve as comparisons for the opposite parity.

The selected element on level $\ell$ is fixed geometrically as
\[
 T_{0,\ell}
 :=\operatorname{conv}\bigl\{(-2^{-\ell},0),
 (-2^{-\ell},-2^{-\ell}),(0,0)\bigr\}.
\]
Because an HHJ stress need not be single-valued at a vertex, the value
and gradient at $x_0$ are evaluated from
$\sigma_h|_{T_{0,\ell}}$.  Figure~\ref{fig:HHJ-mesh-structure} shows
this element, its reflected partner, and the central recovery patch.

\begin{figure}[!htbp]
\centering
\includegraphics[width=0.96\textwidth]{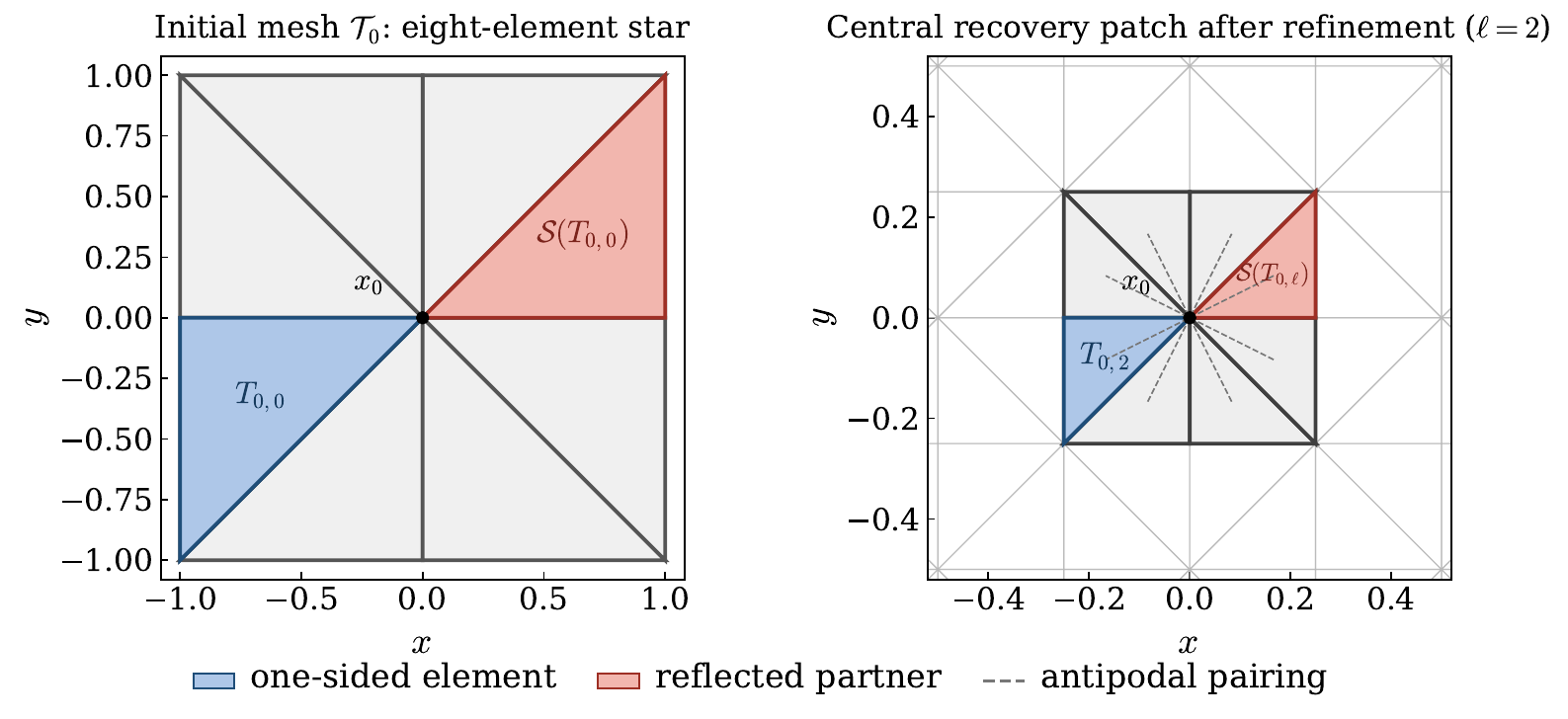}
\caption{Initial mesh and central recovery patch after two uniform
refinements. The highlighted triangles form a reflection pair about
$x_0$.}
\label{fig:HHJ-mesh-structure}
\end{figure}

At each level, the selected-element and recovered quantities use the same HHJ
stress approximation, so recovery is purely a postprocessing step.  All
elementwise values and derivatives are expressed in the same Cartesian
frame before averaging.  Here $|\cdot|_F$ denotes the Frobenius norm
for matrices and its Euclidean extension to tensor arrays.  The four
errors are
\begin{align*}
 e_{\mathrm{elem},\ell}^{(0)}
 &:={}
 \left|
  \sigma(x_0)-\sigma_h|_{T_{0,\ell}}(x_0)
 \right|_F,
 &
 e_{\mathrm{rec},\ell}^{(0)}
 &:={}
 \left|
  \sigma(x_0)-\mathscr M_h\sigma_h(x_0)
 \right|_F,
 \\
 e_{\mathrm{elem},\ell}^{(1)}
 &:={}
 \left|
  \nabla\sigma(x_0)
  -\nabla(\sigma_h|_{T_{0,\ell}})(x_0)
 \right|_F,
 &
 e_{\mathrm{rec},\ell}^{(1)}
 &:={}
 \left|
  \nabla\sigma(x_0)
  -\mathscr M_h^\nabla\sigma_h(x_0)
 \right|_F.
\end{align*}
For each error sequence $e_\ell$, the adjacent-level observed order is
defined by
\begin{equation*}
 \operatorname{rate}_\ell
 =
 \frac{\log(e_{\ell-1}/e_\ell)}
 {\log(h_{\ell-1}/h_\ell)},
 \qquad \ell\ge1.
\end{equation*}
Thus the rate in row $\ell\geq1$ of
Tables~\ref{tab:HHJ-p2-errors} and~\ref{tab:HHJ-p3-errors} uses levels
$\ell-1$ and $\ell$.

\begin{table}[!htbp]
\centering
\small
\setlength{\tabcolsep}{2pt}
\renewcommand{\arraystretch}{1.05}
\caption{Degree-$2$ pointwise errors; the value sequence includes two
additional refinement levels and $h_\ell=\sqrt2\,2^{-\ell}$.}
\label{tab:HHJ-p2-errors}
\begin{tabular}{@{}rrrrr@{\qquad}rrrr@{}}
\hline
& \multicolumn{4}{c}{\textit{(A) Bending-moment value}}
& \multicolumn{4}{c}{\textit{(B) Bending-moment gradient}}\\[2pt]
$\ell$
& $e_{\mathrm{elem}}^{(0)}$ & rate
& $e_{\mathrm{rec}}^{(0)}$ & rate
& $e_{\mathrm{elem}}^{(1)}$ & rate
& $e_{\mathrm{rec}}^{(1)}$ & rate\\
\hline
0 & 2.951e0 & --   & 1.062e1 & --
  & 1.686e1 & --   & 7.637e1 & --\\
1 & 1.061e-1 & 4.80 & 7.008e-1 & 3.92
  & 4.385e0 & 1.94 & 6.273e0 & 3.61\\
2 & 2.294e-2 & 2.21 & 3.953e-2 & 4.15
  & 9.094e-1 & 2.27 & 1.148e0 & 2.45\\
3 & 5.352e-3 & 2.10 & 2.400e-3 & 4.04
  & 2.588e-1 & 1.81 & 2.852e-1 & 2.01\\
4 & 8.737e-4 & 2.62 & 1.488e-4 & 4.01
  & 7.797e-2 & 1.73 & 7.172e-2 & 1.99\\
5 & 1.235e-4 & 2.82 & 9.281e-6 & 4.00
  & -- & -- & -- & --\\
6 & 1.638e-5 & 2.92 & 5.794e-7 & 4.00
  & -- & -- & -- & --\\
\hline
\end{tabular}
\end{table}

\begin{table}[!htbp]
\centering
\small
\setlength{\tabcolsep}{2pt}
\renewcommand{\arraystretch}{1.05}
\caption{Degree-$3$ pointwise errors, with
$h_\ell=\sqrt2\,2^{-\ell}$.}
\label{tab:HHJ-p3-errors}
\begin{tabular}{@{}rrrrr@{\qquad}rrrr@{}}
\hline
& \multicolumn{4}{c}{\textit{(A) Bending-moment value}}
& \multicolumn{4}{c}{\textit{(B) Bending-moment gradient}}\\[2pt]
$\ell$
& $e_{\mathrm{elem}}^{(0)}$ & rate
& $e_{\mathrm{rec}}^{(0)}$ & rate
& $e_{\mathrm{elem}}^{(1)}$ & rate
& $e_{\mathrm{rec}}^{(1)}$ & rate\\
\hline
0 & 7.298e-1 & --   & 2.125e0 & --
  & 7.452e0 & --   & 5.429e1 & --\\
1 & 7.904e-2 & 3.21 & 1.239e-1 & 4.10
  & 1.434e0 & 2.38 & 2.929e0 & 4.21\\
2 & 7.629e-3 & 3.37 & 7.212e-3 & 4.10
  & 3.507e-1 & 2.03 & 1.988e-1 & 3.88\\
3 & 6.000e-4 & 3.67 & 4.317e-4 & 4.06
  & 5.817e-2 & 2.59 & 1.269e-2 & 3.97\\
4 & 4.174e-5 & 3.85 & 2.663e-5 & 4.02
  & 8.206e-3 & 2.83 & 7.970e-4 & 3.99\\
\hline
\end{tabular}
\end{table}

Table~\ref{tab:HHJ-p2-errors} shows that the selected-element bending-moment
value enters its asymptotic regime only on the finer meshes, where its
rate approaches three, while the recovered value stabilizes near fourth
order.  Both $p=2$ gradient sequences remain on the ordinary
second-order scale.  Table~\ref{tab:HHJ-p3-errors} shows the complementary
behavior: both values approach fourth order, whereas the selected-element and
recovered gradients approach third and fourth order, respectively.
Thus the recovery gain switches from the value for even $p=2$ to the
first derivative for odd $p=3$, as predicted by the parity mechanism
under reflection.  On these uniform meshes, each quantity predicted to
superconverge gains about
one order, exceeding the improvement proved in
Corollary~\ref{cor:HHJ-optimized-superconvergence}.

\subsection{Nonuniform mesh comparisons}

For each $\ell$, Families~II and III retain the connectivity of
$\mathcal T_{\ell+1}$ and perturb its vertices.  Thus the family index
$\ell$ refers to the perturbed mesh constructed from uniform level
$\ell+1$.  In Family~II, each interior pair $\{x,-x\}$ is moved
according to
\begin{align*}
 x^*&=x+r_{\ell,x}(\cos\theta_{\ell,x},\sin\theta_{\ell,x}),
 &(-x)^*&=-x^*,\\
 \theta_{\ell,x}&\sim\operatorname{Unif}[0,2\pi),
 &r_{\ell,x}&\sim\operatorname{Unif}
 [0.054h_{\ell+1},0.12h_{\ell+1}].
\end{align*}
Boundary vertices and $x_0$ remain fixed, and the random seed is
$20260904+1009\ell$.

Family~III additionally moves the three central-ring vertices nearest
the directions $-\pi$, $-\pi/2$, and $\pi/4$ by $0.15h_{\ell+1}$ in
the respective normalized directions $(0.82,0.57)$,
$(-0.64,0.77)$, and $(0.73,-0.68)$, while leaving their reflected
partners fixed.  Thus Family~II preserves reflection symmetry, whereas
Family~III breaks it locally at $x_0$, as shown in
Figure~\ref{fig:HHJ-three-mesh-families}.

\begin{figure}[!htbp]
\centering
\includegraphics[width=0.98\textwidth]
{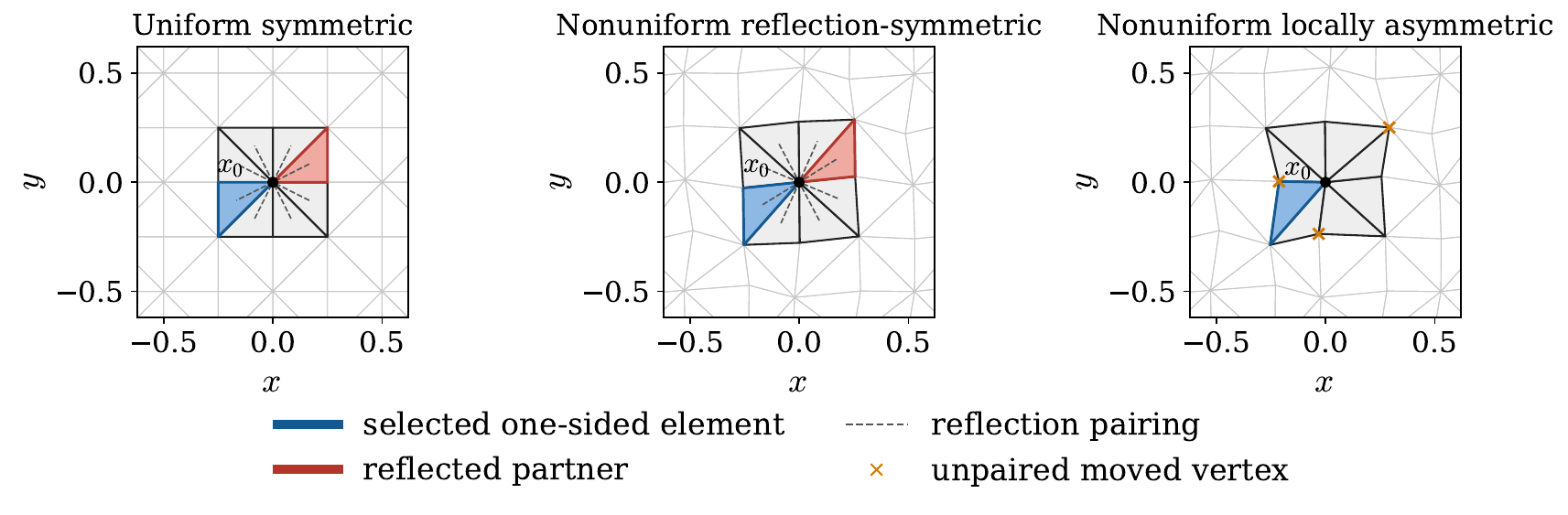}
\caption{Uniform symmetric, nonuniform reflection-symmetric, and locally
asymmetric mesh families. The blue triangle is the selected element.}
\label{fig:HHJ-three-mesh-families}
\end{figure}

The reported meshes remain shape regular.  The minimum angles of
Families~II and III are $25.3$ and $19.8$ degrees, respectively, and
each central star has eight elements.  For Family~II, the largest
observed ratio of the largest to the smallest element diameter is
$1.585$.  Figure~\ref{fig:HHJ-recovery-symmetry-comparison}
compares the recovered errors across all three families.

\begin{figure}[!htbp]
\centering
\includegraphics[width=0.94\textwidth]
{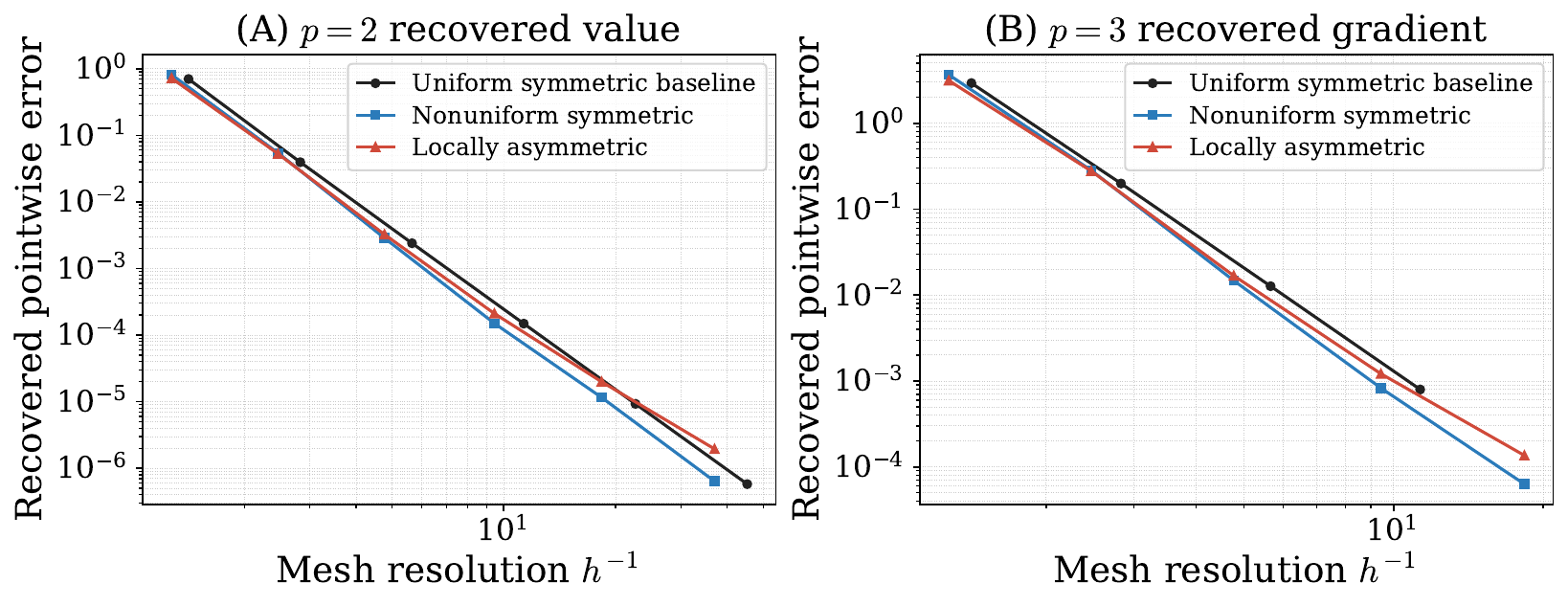}
\caption{Recovered \(p=2\) bending-moment values (A) and \(p=3\) gradients (B) on the three mesh families.}
\label{fig:HHJ-recovery-symmetry-comparison}
\end{figure}

Family~II retains approximately fourth-order recovery for both quantities
predicted to superconverge, whereas Family~III deteriorates on the finer meshes.  At the
finest reported levels, the $p=2$ value rates are $4.136$ and $3.315$
for Families~II and III, while the $p=3$ gradient rates are $3.871$ and
$3.292$, respectively.  Thus nonuniformity alone does not remove the
gain, but breaking the local reflection structure affects it.  Because
Family~III intentionally violates
Assumption~\ref{ass:HHJ-local-reflection-symmetry}, these results
illustrate the symmetry mechanism but do not establish a universal
asymmetric convergence order.

\FloatBarrier
\bibliographystyle{amsplain-nobysame}
\bibliography{ref}
\end{document}